\newtheorem{theorem}{Theorem}[section]
\newtheorem{corollary}[theorem]{Corollary}
\newtheorem{lemma}[theorem]{Lemma}
\newtheorem{proposition}[theorem]{Proposition}
\def\proofof[#1]{\noindent\textbf{#1}}
\begin{document}

\newcommand{\cov}{\operatorname{cov}}
\newcommand{\ran}{\operatorname{ran}}
\newcommand{\dom}{\operatorname{dom}}
\newcommand{\rank}{\operatorname{rank}}
\newcommand{\supp}{\mbox{\rm supp}}
\newcommand{\grad}{\mbox{\rm grad}}
\newcommand{\dist}{\mbox{\rm dist}}
\newcommand{\cl}{\mbox{\rm cl}}
\newcommand{\bd}{\mbox{\rm bd}}
\newcommand{\sgn}{\mbox{\rm sgn}}
\newcommand{\excess}{\mbox{\rm excess}}
\newcommand{\Id}{\mbox{\rm Id}}
\newcommand{\id}{\mbox{\rm id}}
\newcommand{\len}{\mbox{\rm len}}
\newcommand{\inarc}{\operatorname{in}}
\newcommand{\outarc}{\operatorname{out}}
\newcommand{\inarb}{\operatorname{inarb}}
\newcommand{\outarb}{\operatorname{outarb}}
\newcommand{\iness}{\operatorname{iness}}
\newcommand{\ess}{\operatorname{ess}}
\newcommand{\lspan}{\mbox{\rm span}}
\newcommand{\str}{\operatorname{str}}
\newcommand{\weak}{\operatorname{weak}}
\newcommand{\inter}{\operatorname{int}}
\newcommand{\arctg}{\operatorname{arctg}}
\newcommand{\tv}{\operatorname{tv}}
\newcommand{\diag}{\operatorname{diag}}
\newcommand{\argmax}{\operatorname{arg max}}
\newcommand{\argmin}{\operatorname{arg min}}
\newcommand{\con}{\operatorname{con}}

\newcommand{\bs}{\backslash}
\newcommand{\mb}{\mathbf}
\newcommand{\mc}{\mathcal}
\newcommand{\ms}{\mathscr}
\newcommand{\mf}{\mathfrak}
\newcommand{\ol}{\overline}
\newcommand{\ul}{\underline}
\newcommand{\orarr}{\overrightarrow}
\newcommand{\wh}{\widehat}
\newcommand{\wt}{\widetilde}
\newcommand{\eps}{\varepsilon}
\newcommand{\vt}{\vartheta}

\newcommand{\To}{\Rightarrow}
\newcommand{\ot}{\leftarrow}
\newcommand{\oT}{\Leftarrow}
\newcommand{\otto}{\leftrightarrow}
\newcommand{\oTTo}{\Leftrightarrow}

\newcommand{\R}{\mathbb{R}}
\newcommand{\RPL}{\mathbb{R}_+}
\newcommand{\RNN}{\mathbb{R}_{\geq0}}

\newcommand{\Rn}{\mathbb{R}^n}
\newcommand{\RPLn}{\mathbb{R}^n_+}
\newcommand{\RNNn}{\mathbb{R}^n_{\geq0}}
\newcommand{\Rnn}{\mathbb{R}^{n\times n}}
\newcommand{\RPLnn}{\mathbb{R}^{n\times n}_+}
\newcommand{\RNNnn}{\mathbb{R}^{n\times n}_{\geq0}}

\newcommand{\Rc}{\mathbb{R}^c}
\newcommand{\RPLc}{\mathbb{R}^c_+}
\newcommand{\RNNc}{\mathbb{R}^c_{\geq0}}
\newcommand{\Rcc}{\mathbb{R}^{c\times c}}
\newcommand{\RPLcc}{\mathbb{R}^{c\times c}_+}
\newcommand{\RNNcc}{\mathbb{R}^{c\times c}_{\geq0}}

\newcommand{\Rm}{\mathbb{R}^m}
\newcommand{\RPLm}{\mathbb{R}^m_+}
\newcommand{\RNNm}{\mathbb{R}^m_{\geq0}}
\newcommand{\Rmm}{\mathbb{R}^{m\times m}}
\newcommand{\RPLmm}{\mathbb{R}^{m\times m}_+}
\newcommand{\RNNmm}{\mathbb{R}^{m\times m}_{\geq0}}

\newcommand{\Rp}{\mathbb{R}^p}
\newcommand{\RPLp}{\mathbb{R}^p_+}
\newcommand{\RNNp}{\mathbb{R}^p_{\geq0}}
\newcommand{\Rpp}{\mathbb{R}^{p\times p}}
\newcommand{\RPLpp}{\mathbb{R}^{p\times p}_+}
\newcommand{\RNNpp}{\mathbb{R}^{p\times p}_{\geq0}}

\newcommand{\Rq}{\mathbb{R}^q}
\newcommand{\RPLq}{\mathbb{R}^q_+}
\newcommand{\RNNq}{\mathbb{R}^q_{\geq0}}
\newcommand{\Rqq}{\mathbb{R}^{q\times q}}
\newcommand{\RPLqq}{\mathbb{R}^{q\times q}_+}
\newcommand{\RNNqq}{\mathbb{R}^{q\times q}_{\geq0}}

\newcommand{\Rcn}{\mathbb{R}^{c\times n}}
\newcommand{\RPLcn}{\mathbb{R}^{c\times n}_+}
\newcommand{\RNNcn}{\mathbb{R}^{c\times n}_{\geq0}}
\newcommand{\Rnc}{\mathbb{R}^{n\times c}}
\newcommand{\RPLnc}{\mathbb{R}^{n\times c}_+}
\newcommand{\RNNnc}{\mathbb{R}^{n\times c}_{\geq0}}

\newcommand{\Rcm}{\mathbb{R}^{c\times m}}
\newcommand{\RPLcm}{\mathbb{R}^{c\times m}_+}
\newcommand{\RNNcm}{\mathbb{R}^{c\times m}_{\geq0}}
\newcommand{\Rmc}{\mathbb{R}^{m\times c}}
\newcommand{\RPLmc}{\mathbb{R}^{m\times c}_+}
\newcommand{\RNNmc}{\mathbb{R}^{m\times c}_{\geq0}}

\newcommand{\Rnm}{\mathbb{R}^{n\times m}}
\newcommand{\RPLnm}{\mathbb{R}^{n\times m}_+}
\newcommand{\RNNnm}{\mathbb{R}^{n\times m}_{\geq0}}
\newcommand{\Rmn}{\mathbb{R}^{m\times n}}
\newcommand{\RPLmn}{\mathbb{R}^{m\times n}_+}
\newcommand{\RNNmn}{\mathbb{R}^{m\times n}_{\geq0}}

\newcommand{\C}{\mathbb{C}}
\newcommand{\F}{\mathbb{F}}

\newcommand{\Z}{\mathbb{Z}}
\newcommand{\ZPL}{\mathbb{Z}_+}
\newcommand{\ZNN}{\mathbb{Z}_{\geq0}}

\newcommand{\Zn}{\mathbb{Z}^n}
\newcommand{\ZPLn}{\mathbb{Z}^n_+}
\newcommand{\ZNNn}{\mathbb{Z}^n_{\geq0}}
\newcommand{\Znn}{\mathbb{Z}^{n\times n}}
\newcommand{\ZPLnn}{\mathbb{Z}^{n\times n}_+}
\newcommand{\ZNNnn}{\mathbb{Z}^{n\times n}_{\geq0}}

\newcommand{\Zcn}{\mathbb{Z}^{c\times n}}
\newcommand{\ZPLcn}{\mathbb{Z}^{c\times n}_+}
\newcommand{\ZNNcn}{\mathbb{Z}^{c\times n}_{\geq0}}
\newcommand{\Znc}{\mathbb{Z}^{n\times c}}
\newcommand{\ZPLnc}{\mathbb{Z}^{n\times c}_+}
\newcommand{\ZNNnc}{\mathbb{Z}^{n\times c}_{\geq0}}

\newcommand{\N}{\mathbb{N}}

\title{On the dependence of the existence of the positive steady states \\ on the rate coefficients for deficiency-one mass action systems:\\ single linkage class}
\author{Bal\'azs Boros\footnote{Institute of Mathematics, E\"otv\"os Lor\'and University, Budapest, Hungary;
address: P\'azm\'any P\'eter s\'et\'any 1/c, 1117 Budapest, Hungary; email: bboros@cs.elte.hu; phone: +3613722500 (ext 8521); fax: +3613812152}}
\maketitle

\begin{abstract}
  The Deficiency-One Theorem states that there exists a unique positive steady state in each positive stoichiometric class for weakly reversible deficiency-one mass action systems with one linkage class (regardless of the values of the rate coefficients). The non-emptiness of the set of positive steady states does not remain valid if we omit the weak reversibility. A recently published paper provided an equivalent condition to the existence of a positive steady state for deficiency-one mass action systems that are not weakly reversible, but still has only one linkage class. Based on that result, we characterise in this paper those of these mass action systems for which the non-emptiness of the set of positive steady states holds regardless of the values of the rate coefficients. Also, we provide an equivalent condition to the existence of rate coefficients such that the set of positive steady states is nonempty for the resulting mass action system.

  \textbf{Keywords:} chemical reaction networks, mass action systems, rate coefficients, deficiency, positive steady states, Deficiency-One Theorem, Matrix-Tree Theorem
\end{abstract}

\section{Introduction}

The foundations of Chemical Reaction Network Theory (CRNT) was developed by Feinberg, Horn, and Jackson \cite{feinberg:1979, feinberg:1973, feinberg:1987, feinberg:1995a, feinberg:horn:1977, horn:1973, horn:jackson:1972} in the 1970s. Several results are available concerning the existence and/or the uniqueness of the positive steady states of mass action systems \cite{banaji:craciun:2010, banaji:donnell:baignent:2007, boros:2010, boros:2012, craciun:feinberg:2005, craciun:feinberg:2006, craciun:feinberg:2010, craciun:helton:williams:2008, feinberg:1987, feinberg:1995a}. A recent result provides an equivalent condition to the non-emptiness of the set of positive steady states for single linkage class deficiency-one mass action systems that are not weakly reversible, see \cite[Theorem III.7]{boros:2010} and \cite[Theorems 3.11 and 3.19]{boros:2012}. For such mass action systems, the non-emptiness of the set of positive steady states \emph{may} depend on the values of the rate coefficients. In this paper, we characterise those of these chemical reaction networks, which has the property that the non-emptiness of the set of positive steady states does \emph{not} depend on the values of the rate coefficients. The main graph theoretical tool we use is the Matrix-Tree Theorem, see \cite[Theorem 3.6]{tutte:1948}. We remark that there are several recent papers in the field of CRNT that makes use of the Matrix-Tree Theorem, see e.g. \cite{craciun:dickenstein:shiu:sturmfels:2008, dickenstein:perezmillan:2011, feliu:wiuf:2012, feliu:wiuf:2013, karp:perezmillan:dasgupta:dickenstein:gunawardena:2012, thomson:gunawardena:2009}. Finally, we mention here that a potential application of our results is to check the satisfaction of one of the assumptions of the main result of \cite{shinar:feinberg:2010} (see the theorem at the bottom  of page 1390 in \cite{shinar:feinberg:2010}). A certain robustness property is proven there for deficiency-one mass action systems that are not weakly reversible, admit a positive steady state, and satisfy another (easily checkable) condition.

The rest of this paper is organised as follows. After a brief section on notations, we provide an overview of the required notations from CRNT in Section \ref{sec:CRNT}. The primary goal of this paper is to investigate the dependence of the non-emptiness of the set of positive steady states on the rate coefficients for deficiency-one mass action systems that are not weakly reversible, but still has only one linkage class. In Section \ref{sec:specialcases}, we examine this dependence under some extra assumptions on the graph of complexes (the directed graph that encodes the reactions). In Subsection \ref{subsec:chain}, we assume that the reactions form a \lq\lq chain'', while in Subsection \ref{subsec:tree-like}, we pose the condition that the graph of complexes is \lq\lq tree-like''. In Sections \ref{sec:general_exists} and \ref{sec:general_forall}, we generalise the results of Section \ref{sec:specialcases} (i.e., in these sections we omit the restrictive assumptions on the structure of the graph of complexes that were posed in Section \ref{sec:specialcases}). As it will become apparent, especially in Sections \ref{sec:specialcases}, \ref{sec:general_exists}, and \ref{sec:general_forall}, we need more involved graph theoretical arguments. Therefore, Appendices \ref{sec:app_directed_graphs}, \ref{sec:app_positive_h-transshipment}, and \ref{sec:app_mtx-tree-thm} are devoted to summarise the purely graph theoretical notions and results that are used throughout this paper.

\section{Notations}

Denote by $\R$, $\RPL$, and $\RNN$ the set of real, positive real, and nonnegative real numbers, respectively, i.e., $\RPL=\{x\in\R~|~x>0\}$ and $\RNN=\{x\in\R~|~x\geq0\}$. For $v\in\R^p$, the $i$th coordinate of $v$ is denoted by $v_i$ ($i \in \{1, \ldots, p\}$). For a matrix $A\in\R^{p\times q}$, the $j$th column and the $(i,j)$th entry of $A$ are denoted by $A_{\cdot j}$ and $A_{ij}$, respectively ($i \in \{1, \ldots, p\}, j \in \{1, \ldots, q\}$). For a matrix $A$, we denote by $A^{\top}, \ker A$, and $\ran A$ the transpose, the kernel, and the range of $A$, respectively. For a square matrix $A$, denote by $\det A$ the determinant of $A$.

For any finite set $X$, $X_0 \subseteq X$, and function $g : X \to \R$ we define $g(X_0)$ by
\begin{align}\label{eq:p(X_0)=sump(x)}
g(X_0) = \sum_{x \in X_0} g(x).
\end{align}
We make this convention in order to ease the notation in several situations.

For any set $A$, denote the cardinality of $A$ by $|A|$.

We also summarise here the graph theoretical notations that will be used throughout this paper. For more details on these, please refer to Appendix \ref{sec:app_directed_graphs}. Let $D = (V, A)$ be a directed graph for the rest of this section. For $i, j \in V$, the set of directed paths from $i$ to $j$ is denoted by $\orarr{i,j}$. For a directed path $P$ we denote by $V[P]$ the vertex set of $P$ (we use the notation $V[P]$ even if the vertex set of the directed graph in question is denoted by some other symbol than $V$) and by $\len(P)$ the length of $P$. For $i_1, i_2, i_3 \in V$, $P \in \orarr{i_1, i_2}$, and $Q \in \orarr{i_2, i_3}$, denote by $\con(P, Q)$ the concatenation of $P$ and $Q$.

For $U \subseteq V$ let us denote by $\varrho^{\inarc}_D(U)$ and $\varrho^{\outarc}_D(U)$ set of arcs that enter $U$ and leave $U$, respectively. For a function $z : A \to \R$, denote by $\excess_z$ the excess function associated to $D$ and $z$. For $i \in V$ we use the notations $\varrho^{\inarc}(i)$, $\varrho^{\outarc}(i)$, and $\excess_z(i)$ instead of $\varrho^{\inarc}(\{i\})$, $\varrho^{\outarc}(\{i\})$, and $\excess_z(\{i\})$, respectively. For $U \subseteq V$, denote by $\mc{T}_D(U)$ the set of $U$-branchings in $D$. Finally, for $i, j \in V$ and $U \subseteq V$ let us define $\mc{T}^{ij}_D(U)$ by
\begin{align}
\mc{T}^{ij}_{D}(U) = \{ \wt{A} \in \mc{T}_{D}(U) ~|~ \mbox{there exists a directed path from $i$ to $j$ in $(V,\wt{A})$} \}.
\end{align}

\section{Preliminaries from CRNT}\label{sec:CRNT}

In this section we provide a brief overview of the basic notions of CRNT used in this paper. However, since the results of this paper rely heavily on the earlier result \cite[Theorem 3.11]{boros:2012}, we assume some familiarity of the reader with these notions. Therefore, this section is rather a summary of the notions and notations used later on in this paper. The interested reader should consult e.g. \cite[Sections 2 and 3]{feinberg:1995a} for a fuller discussion on these basic notions, while a short introduction can be found e.g. in \cite[Section 2]{boros:2012}. The latter reference uses almost the same notations as the present paper.

A \emph{chemical reaction network} (or just \emph{reaction network}) is a triple
$(\mc{X},\mc{C},\mc{R})$ of three nonempty finite
sets, where
\begin{itemize}
  \item $\mc{X} = \{X_1, \ldots, X_n\}$ is the \emph{set of species},
  \item $\mc{C} = \{C_1, \ldots, C_c\}$ is the \emph{set of complexes}, and
  \item $\mc{R} \subseteq \{ (C_i,C_j) \in \mc{C} \times \mc{C} ~|~ i, j \in \{1, \ldots, c\}, i \neq j \}$ is the \emph{set of reactions}.
\end{itemize}
Roughly speaking, the complexes are linear combinations of the species with nonnegative integer coefficients. A convenient way to specify the set of complexes is to provide an $n \times c$ matrix, denoted by $B$, whose entries are nonnegative integers, the rows refer to the species, and the columns refer to the complexes. In order to ease the notation, we identify the sets $\mc{C}$ and $\{1, 2, \ldots, c\}$. Accordingly, we mostly write $i$ and $(i,j)$ instead of $C_i$ and $(C_i, C_j)$, respectively ($i, j \in \{1, 2, \ldots, c\}$).

The directed graph $(\mc{C},\mc{R})$ is called the \emph{graph of complexes}. Certain properties of this directed graph will play a central role in this paper. For further reference, let us denote $(\mc{C},\mc{R})$ by the symbol $\mc{D}$.

A \emph{mass action system} is a quadruple $(\mc{X}, \mc{C}, \mc{R}, \kappa)$, where $(\mc{X}, \mc{C}, \mc{R})$ is a chemical reaction network and $\kappa : \mc{R} \to \RPL$ is any function. For $(i,j) \in \mc{R}$, the value $\kappa_{(i,j)}$ is called the \emph{rate coefficient} of the reaction $(i,j)$. In the sequel, we write $\kappa_{ij}$ instead of $\kappa_{(i,j)}$. We consider a continuous-time continuous-state deterministic model, where the state of the system represents the \emph{concentrations} of the species and the time-evolution of the state is described by the ordinary differential equation
\begin{align} \label{eq:xdot=f(x)}
\dot{x}(\tau)=\sum_{(i,j)\in\mc{R}} \left(\kappa_{ij}\prod_{s=1}^n x_s(\tau) ^{B_{si}}\right)\cdot(B_{\cdot j}-B_{\cdot i})
\end{align}
with state space $\RNNn$. We will use another form of \eqref{eq:xdot=f(x)} in the sequel. For this aim, we next introduce the matrix $I_{\kappa} \in \Rcc$ and the function $\Theta : \RNNn \to \RNNc$. Let us extend the function $\kappa : \mc{R} \to \RPL$ to $\mc{C} \times \mc{C}$ such that $\kappa_{ij}=0$ for $(i,j)\in(\mc{C}\times\mc{C})\bs\mc{R}$ (we do not make any distinction in notation between the original function and its extension). Define the matrix $I_{\kappa}\in\Rcc$ by
\begin{align}\label{eq:Ikappa}
  I_{\kappa}=
  \left[
  \begin{array}{cccc}
  \kappa_{11}-\sum_{i=1}^c \kappa_{1i} & \kappa_{21} & \cdots & \kappa_{c1} \\
  \kappa_{12} & \kappa_{22}-\sum_{i=1}^c \kappa_{2i} & \cdots & \kappa_{c2} \\
  \vdots & \vdots & \ddots & \vdots \\
  \kappa_{1c} & \kappa_{2c} & \cdots & \kappa_{cc}-\sum_{i=1}^c \kappa_{ci}
  \end{array}
  \right] \in\Rcc
\end{align}
and the function $\Theta : \RNNn \to \RNNc$ by
\begin{align*}
  \Theta(x)=
  \left[
  \begin{array}{c}
   \prod_{s=1}^nx_s^{B_{s1}} \\
   \prod_{s=1}^nx_s^{B_{s2}} \\
   \vdots \\
   \prod_{s=1}^nx_s^{B_{sc}}
  \end{array}
  \right]~~(x\in\RNNn).
\end{align*}
Note that both the rows and the columns of $I_{\kappa}$ and the coordinate functions of $\Theta$ correspond to the vertices of the directed graph $(\mc{C},\mc{R})$, i.e., to the complexes. With these notations, \eqref{eq:xdot=f(x)} can be written equivalently as
\begin{align*}
\dot{x}(\tau) = B \cdot I_{\kappa} \cdot \Theta(x(\tau)).
\end{align*}

The main object we are interested in in this paper is the \emph{set of positive steady states} of a mass action system, denoted by $E^{\kappa}_+$. Let us define $E^{\kappa}_+$ by
\begin{align*}
E^{\kappa}_+ = \{ x \in \RPLn ~|~ B \cdot I_{\kappa} \cdot \Theta(x)=0 \}.
\end{align*}
We have included the symbol $\kappa$ in the notation $E^{\kappa}_+$, because our primary goal in this paper is to investigate the dependence of the non-emptiness of the set of positive steady states on the rate coefficients.

We use the usual terminology of the theory of directed graphs in the sequel. We have also collected the required ones in Appendix \ref{sec:app_directed_graphs}. Denote by $\ell$ the number of weak components of the directed graph $(\mc{C},\mc{R})$. The weak components of $(\mc{C},\mc{R})$ are called \emph{linkage classes} in CRNT. In case all the linkage classes of $(\mc{C},\mc{R})$ are strongly connected, the network is called \emph{weakly reversible} in CRNT. Denote by $t$ the number of absorbing strong components of the directed graph $(\mc{C},\mc{R})$. Since each weak component contains at least one absorbing strong component, we have $\ell \leq t$.

A useful fact is that if $\ell = t$ then $\ran I_{\kappa}$ does \emph{not} depend on $\kappa$ (see e.g. \cite[Corollary 2.6]{boros:2012}). Based on this, we define the \emph{deficiency}, denoted by $\delta$, of a reaction network of the type $\ell = t$ by $\delta = \dim(\ker B \cap \ran I_{\kappa})$.

The following theorem is a classical result in CRNT, it is called the \emph{Deficiency-Zero Theorem}, see e.g. \cite[Theorem 6.1.1]{feinberg:1987}.

\begin{theorem}\label{thm:dfc0,t=l=1}
Let $(\mc{X}, \mc{C}, \mc{R})$ be a chemical reaction network with $\ell = t = 1$ and $\delta = 0$. Then the following statements hold.
\begin{itemize}
  \item[(a)]  If $(\mc{C}, \mc{R})$ is strongly connected then for all $\kappa : \mc{R} \to \RPL$ we have $E^{\kappa}_+ \neq \emptyset$.
  \item[(b)] If $(\mc{C}, \mc{R})$ is \emph{not} strongly connected then for all $\kappa : \mc{R} \to \RPL$ we have $E^{\kappa}_+ = \emptyset$.
\end{itemize}
\end{theorem}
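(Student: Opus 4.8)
The plan is to first reduce the defining condition for $E^{\kappa}_+$ to a condition on $I_{\kappa}\Theta(x)$ alone, using $\delta=0$, and then to read off solvability from the structure of $\ker I_{\kappa}$, which the Matrix-Tree Theorem describes completely. Observe that for $x\in\RPLn$ the vector $\Theta(x)$ lies in $\RPLc$, hence is strictly positive, and that the columns of $I_{\kappa}$ sum to zero, i.e. $\mathbf{1}^{\top}I_{\kappa}=0$. Since $x\in E^{\kappa}_+$ means $B\cdot I_{\kappa}\cdot\Theta(x)=0$, that is $I_{\kappa}\Theta(x)\in\ker B$, and since $I_{\kappa}\Theta(x)\in\ran I_{\kappa}$ automatically, the hypothesis $\delta=\dim(\ker B\cap\ran I_{\kappa})=0$ forces $I_{\kappa}\Theta(x)=0$. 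Thus in both cases $E^{\kappa}_+=\{x\in\RPLn\mid I_{\kappa}\Theta(x)=0\}$, and the question becomes whether $\ker I_{\kappa}$ contains a vector of the form $\Theta(x)$ with $x\in\RPLn$.

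Next I would pin down $\ker I_{\kappa}$. Because $\ell=t=1$, the Matrix-Tree Theorem yields that $\ker I_{\kappa}$ is one-dimensional, spanned by the vector $\psi$ with $\psi_i=\sum_{\wt{A}\in\mc{T}_{\mc{D}}(\{i\})}\prod_{(a,b)\in\wt{A}}\kappa_{ab}$, the total weight of the spanning arborescences rooted at $i$. The coordinate $\psi_i$ is positive precisely when some such arborescence exists, i.e. when every complex has a directed path to $i$; since the unique absorbing strong component $\Lambda$ is reachable from every vertex (being the only terminal strong component) and is itself strongly connected, this holds exactly for $i\in\Lambda$, while for $i\notin\Lambda$ no vertex of $\Lambda$ can reach $i$, so $\psi_i=0$. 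Hence $\psi$ is strictly positive on $\Lambda$ and vanishes off $\Lambda$.

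For part (a), strong connectivity gives $\Lambda=\mc{C}$, so $\psi\in\RPLc$. Here $x\in E^{\kappa}_+$ amounts to $\Theta(x)=\lambda\psi$ for some $\lambda>0$; taking coordinatewise logarithms and using $\ln\Theta(x)=B^{\top}\ln x$, this is the requirement that $\ln\psi\in\ran B^{\top}+\lspan\{\mathbf{1}\}$. I would verify this by orthogonal complements: since $t=1$ we have $\dim\ran I_{\kappa}=c-1$, and as $\ran I_{\kappa}\subseteq\{\mathbf{1}\}^{\perp}$ with the latter also of dimension $c-1$, equality $\ran I_{\kappa}=\{\mathbf{1}\}^{\perp}$ holds; consequently $(\ran B^{\top}+\lspan\{\mathbf{1}\})^{\perp}=\ker B\cap\{\mathbf{1}\}^{\perp}=\ker B\cap\ran I_{\kappa}=\{0\}$ by $\delta=0$, so $\ran B^{\top}+\lspan\{\mathbf{1}\}=\Rc$ and the required membership is automatic. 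Solving $B^{\top}w=\ln\psi+\mu\mathbf{1}$ for suitable $w\in\R^{n}$ and $\mu\in\R$ and setting $x=e^{w}$ (coordinatewise) then yields $\Theta(x)=e^{\mu}\psi\in\ker I_{\kappa}$, i.e. $x\in E^{\kappa}_+$.

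For part (b), non-strong-connectivity means $\Lambda\subsetneq\mc{C}$, so $\psi$ has at least one zero coordinate. Every element of $\ker I_{\kappa}=\lspan\{\psi\}$ therefore vanishes off $\Lambda$, whereas $\Theta(x)$ is strictly positive for each $x\in\RPLn$; hence $\Theta(x)\notin\ker I_{\kappa}$ and $E^{\kappa}_+=\emptyset$. The crux of the argument is the reduction to $\ker I_{\kappa}$ together with its precise description via the Matrix-Tree Theorem; the step I expect to require the most care is the logarithmic linearisation in part (a), where the deficiency-zero hypothesis must be converted---through the identity $\ran I_{\kappa}=\{\mathbf{1}\}^{\perp}$---into the surjectivity statement $\ran B^{\top}+\lspan\{\mathbf{1}\}=\Rc$ that guarantees a strictly positive solution.
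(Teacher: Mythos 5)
Your argument is correct. Note, however, that the paper does not prove Theorem \ref{thm:dfc0,t=l=1} at all: it is quoted as a classical result with a citation to Feinberg's lectures, so there is no in-paper proof to compare against. Your write-up is a valid self-contained proof along the standard lines: the reduction $B I_{\kappa}\Theta(x)=0 \Rightarrow I_{\kappa}\Theta(x)=0$ via $\delta=0$ is exactly the classical first step, and the description of $\ker I_{\kappa}$ as the span of the arborescence-weight vector $\psi$, positive precisely on the absorbing strong component, is the Markov Chain Tree Theorem; the classical treatment (Feinberg--Horn) obtains the same support/positivity structure of $\ker I_{\kappa}$ by a direct kernel lemma rather than via the Matrix-Tree Theorem, but the conclusion is identical, and your route has the advantage of meshing with the machinery the paper already sets up in Appendix \ref{sec:app_mtx-tree-thm}. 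The logarithmic step in part (a) is also the standard one, and your derivation of $\ran B^{\top}+\lspan\{\mathbf{1}\}=\Rc$ from $\ran I_{\kappa}=\{\mathbf{1}\}^{\perp}$ and $\delta=0$ is clean. The only point stated more tersely than it is proved is the claim that the Matrix-Tree Theorem ``yields'' that $\ker I_{\kappa}=\lspan\{\psi\}$: strictly, the theorem gives the cofactors, and one still needs the adjugate argument (a nonvanishing principal $(c-1)\times(c-1)$ minor at a vertex of $\Lambda$ gives $\rank I_{\kappa}=c-1$, and $I_{\kappa}\cdot\mathrm{adj}(I_{\kappa})=0$ identifies the kernel with a column of the adjugate); this is routine but worth a sentence.
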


The also classical \emph{Deficiency-One Theorem} has been augmented recently, see \cite[Theorem 6.2.1]{feinberg:1987} and \cite[Theorem 3.19]{boros:2012}. To state the theorem, we need some further notations, which will then be used throughout this paper. For a chemical reaction network with $\ell=t=1$, denote by $\mc{C}'$ the set of complexes, which are in the terminal strong linkage class of $(\mc{C},\mc{R})$ and let $\mc{C}''=\mc{C}\backslash\mc{C}'$. Let $c'=|\mc{C}'|$ and $c''=|\mc{C}''|$ (thus, $c'' = c - c'$). With this, $I_{\kappa} \in \Rcc$, $\Theta : \Rn \to \R^c$, and any vector $v \in \R^c$ can be considered the block forms
\begin{align*}
I_{\kappa}=\left[
  \begin{array}{cc}
     I'_{\kappa}  & *      \\
     0            & I''_{\kappa} \\
  \end{array}
  \right] \in \R^{(c' + c'') \times (c' + c'')},~~
\Theta=
\left[\begin{array}{c}
\Theta' \\
\Theta''
\end{array}\right]:\Rn\to\R^{c'+c''},~~\mbox{and}~~v=
\left[\begin{array}{c}
v' \\
v''
\end{array}\right]\in\R^{c'+c''},
\end{align*}
where $I'_{\kappa} \in \R^{c' \times c'}$, $\Theta':\Rn\to\R^{c'}$, and $v'\in\R^{c'}$ correspond to the complexes in $\mc{C}'$, while $I''_{\kappa} \in \R^{c'' \times c''}$, $\Theta'':\Rn\to\R^{c''}$, and $v''\in\R^{c''}$ correspond to the complexes in $\mc{C}''$. There are several ways to prove that $I_{\kappa}''$ is invertible (provided that $\mc{C}'' \neq \emptyset$), see e.g. \cite[Lemma 2.5]{boros:2012}.

Primarily, we are interested in this paper in mass action systems for which $\ell = t = 1$ and $\delta = 1$ hold. For such systems, the linear subspace $\ker B \cap \ran I_{\kappa}$ is one-dimensional and does not depend on $\kappa$ (recall that for systems with $\ell = t$ we have $\delta = \dim(\ker B \cap \ran I_{\kappa})$). For systems that are moreover \emph{not} weakly reversible, let us fix $h \in \Rc$ such that
\begin{align}\label{eq:fix_h}
0\neq h \in \ker B \cap \ran I_{\kappa} ~ \mbox{and} ~ h(\mc{C}'') \leq 0,
\end{align}
where the notation $h(\mc{C}'')$ is understood in accordance with \eqref{eq:p(X_0)=sump(x)}, i.e., the sum of certain coordinates of $h$ is non-positive, where the summation goes for those complexes that are out of the absorbing strong component of $(\mc{C}, \mc{R})$.

With these notations in hand, we are ready to state the Deficiency-One Theorem in its augmented form (see \cite[Theorem III.7]{boros:2010} and \cite[Theorems 3.11 and 3.19]{boros:2012}).

\begin{theorem}\label{thm:dfc1,t=l=1}
Let $(\mc{X}, \mc{C}, \mc{R})$ be a chemical reaction network with $\ell = t = 1$ and $\delta = 1$. Then the following statements hold.
\begin{itemize}
  \item[(a)]  If $(\mc{C}, \mc{R})$ is strongly connected then for all $\kappa : \mc{R} \to \RPL$ we have $E^{\kappa}_+ \neq \emptyset$.
  \item[(b)] Assume that $(\mc{C}, \mc{R})$ is \emph{not} strongly connected. Let $h \in \Rc$ be as in \eqref{eq:fix_h} and fix $\kappa : \mc{R} \to \RPL$. Then
              \begin{align*}
              E^{\kappa}_+ \neq \emptyset ~ \mbox{if and only if all the coordinates of} ~ (I_{\kappa}'')^{-1}h'' ~ \mbox{are positive}.
              \end{align*}
\end{itemize}
\end{theorem}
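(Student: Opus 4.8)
The plan is to turn the steady-state equation into a linear membership condition using $\delta=1$, to read off the necessary sign condition from the block-triangular structure of $I_\kappa$, and then to build a steady state for the converse. Throughout write $\mu=\Theta(x)$, and recall that as $x$ ranges over $\RPLn$ the image $\{\Theta(x)\}$ is exactly $\{\mu\in\RPLc~|~\ln\mu\in\ran B^{\top}\}$ (take $x=e^{y}$ componentwise when $\ln\mu=B^{\top}y$). Since $I_\kappa\mu\in\ran I_\kappa$ always holds, $B I_\kappa\mu=0$ is equivalent to $I_\kappa\mu\in\ker B\cap\ran I_\kappa=\lspan\{h\}$; hence $x\in E^\kappa_+$ if and only if $I_\kappa\Theta(x)=\lambda h$ for some $\lambda\in\R$. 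Because $t=1$, the space $\ker I_\kappa$ is one-dimensional, and using the strong connectivity of the terminal class $\mc C'$ together with the Matrix-Tree Theorem one checks it is spanned by a vector $p$ with block form $p=(p',0)$, where $p'>0$ is supported on $\mc C'$. Write $N$ for the upper-right block of $I_\kappa$ (denoted $*$ above).

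I would first settle necessity and the sign of $\lambda$. The lower block of $I_\kappa\mu=\lambda h$ reads $I_\kappa''\mu''=\lambda h''$, so $\mu''=\lambda(I_\kappa'')^{-1}h''$; as $\mu''=\Theta''(x)>0$, this already forces $(I_\kappa'')^{-1}h''$ to have all coordinates of one sign and $\lambda\neq0$. To pin the sign, use the column-sum identity $\mb 1^{\top}I_\kappa=0$ (with $\mb 1$ the all-ones vector), whose second block column gives $\mb 1_{c''}^{\top}I_\kappa''=-\mb 1_{c'}^{\top}N$, a componentwise non-positive vector recording the rates from $\mc C''$ into $\mc C'$; it is not identically zero because $\ell=1$ forces an arc from $\mc C''$ into $\mc C'$. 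Therefore $\lambda\,h(\mc C'')=\mb 1_{c''}^{\top}I_\kappa''\mu''=-\mb 1_{c'}^{\top}N\mu''<0$, and combined with the normalisation $h(\mc C'')\leq0$ this forces $\lambda>0$. Consequently $\mu''=\lambda(I_\kappa'')^{-1}h''>0$ requires every coordinate of $(I_\kappa'')^{-1}h''$ to be positive. Part (a) is then the special case $\mc C''=\emptyset$, where this condition is vacuous and only the sufficiency construction below is needed.

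For the converse, assume $(I_\kappa'')^{-1}h''>0$. Since $h\in\ran I_\kappa$, the equation $I_\kappa\nu=h$ is solvable; fix such a $\nu$, noting its lower block is $\nu''=(I_\kappa'')^{-1}h''$, and note $\nu,p$ are automatically independent because $I_\kappa\nu=h\neq0=I_\kappa p$. Then the full solution set of $I_\kappa\mu\in\lspan\{h\}$ is the two-parameter family $\mu(\lambda,s)=\lambda\nu+sp$. Its lower block is $\mu''(\lambda,s)=\lambda(I_\kappa'')^{-1}h''$, which is positive for $\lambda>0$, while on $\mc C'$ we have $\mu'(\lambda,s)=\lambda\nu'+sp'$ with $p'>0$, so for each fixed $\lambda>0$ all sufficiently large $s$ render $\mu(\lambda,s)\in\RPLc$. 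It remains to choose $(\lambda,s)$ so that in addition $\ln\mu(\lambda,s)\in\ran B^{\top}$, equivalently $\ln\mu(\lambda,s)\perp\ker B$; this realisability requirement is the only, and the essential, obstacle.

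I expect that last step to be the hard part, since it is in effect the analytic core of the Deficiency-One Theorem. My plan is to exploit that $\delta=1$ collapses the realisability obstruction to essentially a scalar sign condition: overall scaling $\mu\mapsto\tau\mu$ shifts $\ln\mu$ by $(\ln\tau)\mb 1$ and acts as $(\lambda,s)\mapsto(\tau\lambda,\tau s)$, so after normalising $\lambda=1$ the positive candidates form a single interval $s\in(s_0,\infty)$ on which $\mu(1,s)>0$. I would then study the defect function measuring the component of $\ln\mu(1,s)$ orthogonal to $\ran B^{\top}$, and show, using the Matrix-Tree description of $p$ to control the behaviour as $s\to s_0^{+}$ (where some coordinate of $\mu'$ vanishes) and as $s\to\infty$ (where $\mu$ aligns with the positive vector $p$), that this defect changes sign precisely under the hypothesis $(I_\kappa'')^{-1}h''>0$. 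An intermediate-value (equivalently, Brouwer-degree) argument then produces a realisable positive $\mu$, hence an $x\in E^\kappa_+$. The delicate point is the boundary analysis of the defect at the two ends of the positivity interval, and verifying that the single effective degree of freedom afforded by $\delta=1$ indeed suffices to annihilate it.
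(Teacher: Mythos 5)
You should first note that the paper does not prove Theorem \ref{thm:dfc1,t=l=1} at all: it is imported verbatim from \cite[Theorem III.7]{boros:2010} and \cite[Theorems 3.11 and 3.19]{boros:2012}, so there is no internal proof to match. Measured against what such a proof must contain, your proposal establishes the easy half correctly but leaves a genuine gap in the other half. The reduction ($x\in E^{\kappa}_+$ iff $I_{\kappa}\Theta(x)=\lambda h$, since $\ker B\cap\ran I_{\kappa}=\lspan\{h\}$ by $\delta=1$), the extraction of $\mu''=\lambda(I_{\kappa}'')^{-1}h''$ from the block triangular structure, and the sign determination $\lambda>0$ via the column-sum identity and $h(\mc{C}'')\leq 0$ are all sound; necessity of $(I_{\kappa}'')^{-1}h''\in\RPL^{c''}$ follows. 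But sufficiency --- and with it part (a), which you reduce to the same construction --- is only a plan: you stop at ``study the defect function \ldots and show \ldots that this defect changes sign,'' which is precisely the analytic core of the Deficiency-One Theorem and is the content being claimed.

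Moreover, the plan as stated is too optimistic about the dimension count. The realisability obstruction is the component of $\ln\mu$ in $\ker B$ (equivalently, orthogonal to $\ran B^{\top}$). From $\delta=\dim(\ker B\cap\ran I_{\kappa})=1$ and $\dim\ran I_{\kappa}=c-t=c-1$ one only gets $\dim\ker B\leq 2$, so the defect can be a genuinely two-dimensional quantity. After normalising $\lambda=1$ you retain a single free parameter $s$, and a one-variable intermediate-value argument cannot in general annihilate a two-dimensional defect; you would have to keep both parameters and run a degree/fixed-point argument in two variables, with boundary analysis on the boundary of the positivity region $\{(\lambda,s): \mu(\lambda,s)>0\}$ rather than at the two ends of an interval. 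Carrying this out is essentially reproving \cite[Theorem 3.11]{boros:2012} (or Feinberg's sign-partition argument), and none of that work is present. As it stands the proposal is an honest reduction plus a correct necessity proof, not a proof of the theorem.
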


Seemingly, there is some freedom in the choice of $h$ (it is chosen from a one-dimensional linear subspace of $\Rc$). On the one hand, if $h(\mc{C}'') < 0$ then $h$ is determined up to a positive scalar multiplier. However, it is clear that the choice of this positive scalar multiplier does not affect the condition $(I_{\kappa}'')^{-1}h'' \in \RPL^{c''}$. On the other hand, if $h(\mc{C}'') = 0$ then $h$ is determined up to a nonzero scalar multiplier. However, it is easy to see that $(I_{\kappa}'')^{-1}h'' \in \RPL^{c''}$ implies $h(\mc{C}'') < 0$ (indeed, let $\vt'' = (I_{\kappa}'')^{-1}h''$ and take the sum of the coordinates on both sides of $I_{\kappa}''\vt'' = h''$). Thus, again, this nonzero scalar multiplier does not affect the condition $(I_{\kappa}'')^{-1}h'' \in \RPL^{c''}$.

As a side remark, we also mention that both in Theorems \ref{thm:dfc0,t=l=1} and \ref{thm:dfc1,t=l=1}, once $E^{\kappa}_+ \neq$ holds, there is exactly one positive steady state in each positive stoichiometric class.

Thus, if the reaction network satisfies $\ell = t = 1$ and $\delta = 0$ then the non-emptiness of $E^{\kappa}_+$ does not depend on $\kappa$. Also, if the reaction network satisfies $\ell = t = 1$ and $\delta = 1$, and moreover $(\mc{C}, \mc{R})$ is strongly connected then, again, the non-emptiness of $E^{\kappa}_+$ does not depend on $\kappa$. However, by Theorem \ref{thm:dfc1,t=l=1} $(b)$, we have a different situation for mass action systems for which the underlying reaction network satisfies $\ell = t = 1$ and $\delta = 1$, but $(\mc{C}, \mc{R})$ is \emph{not} strongly connected. For these mass action systems, the non-emptiness of $E^{\kappa}_+$ \emph{may} depends on $\kappa$. We used the word \lq\lq may'', because for such mass action systems three different kind of phenomena can occur:
\begin{itemize}

  \item $E^{\kappa}_+ \neq \emptyset$ for all $\kappa$ (i.e., for all $\kappa$ all the coordinates of $(I_{\kappa}'')^{-1}h''$ are positive),

  \item $E^{\kappa}_+ = \emptyset$ for all $\kappa$ (i.e., for all $\kappa$ there exists a non-positive coordinate of $(I_{\kappa}'')^{-1}h''$), and

  \item the non-emptiness of $E^{\kappa}_+$ depends on $\kappa$ (i.e., there exists $\kappa$ such that all the coordinates of $(I_{\kappa}'')^{-1}h''$ are positive and there also exists $\kappa$ such that there exists a non-positive coordinate of $(I_{\kappa}'')^{-1}h''$).

\end{itemize}

It is demonstrated in \cite[Analysis of Examples 3.7, 3.8, and 3.9]{boros:2012} that all of these three phenomena can indeed occur. The aim of the present paper is to provide characterisations of the above cases. Namely, we will formulate equivalent conditions to the statements
\begin{align}
\label{eq:exists}&\mbox{\lq\lq there exists $\kappa:\mc{R}\to\RPL$ such that $E^{\kappa}_+\neq\emptyset$'' and}\\
\label{eq:forall}&\mbox{\lq\lq for all $\kappa:\mc{R}\to\RPL$ we have $E^{\kappa}_+\neq\emptyset$''}.
\end{align}

In Section \ref{sec:specialcases} we examine the above questions under some extra assumptions on $(\mc{C},\mc{R})$. In Subsection \ref{subsec:chain} we will assume that $(\mc{C},\mc{R})$ is a \lq\lq chain''. As a generalisation of the results of Subsection \ref{subsec:chain}, we will assume in Subsection \ref{subsec:tree-like} that $(\mc{C},\mc{R})$ is \lq\lq tree-like''. As a matter of fact, we will obtain a recursive formula for the coordinates of $(I_{\kappa}'')^{-1}h''$ (the matrix $I_{\kappa}''$ has some special properties in these cases, which makes it possible to handle the computation of its inverse). Based on the obtained recursive formula, we will deduce equivalent conditions both for \eqref{eq:exists} and \eqref{eq:forall}. In Sections \ref{sec:general_exists} and \ref{sec:general_forall} we will assume only that $(\mc{C}, \mc{R})$ satisfies $\ell = t = 1$, but is \emph{not} strongly connected. Under this assumption, we provide equivalent conditions to \eqref{eq:exists} and \eqref{eq:forall} for these general cases in Sections \ref{sec:general_exists} and \ref{sec:general_forall}, respectively.

\section{Special cases}\label{sec:specialcases}

We always assume in the rest of this paper that the reaction network under consideration satisfies $\ell = t = 1$ and $\delta = 1$, but is \emph{not} strongly connected. Thus, $\mc{C}'' \neq \emptyset$. Since we will apply Theorem \ref{thm:dfc1,t=l=1} $(b)$, we fix $h \in \Rc$ as in \eqref{eq:fix_h} (recall that $\ell = t$ implies that $\ran I_{\kappa}$ does not depend on $\kappa$, and hence, $h$ is not influenced by $\kappa$). Also, let $\vt \in \R^c$ be such that
\begin{align}\label{eq:fix_vt}
I_{\kappa} \vt = h.
\end{align}
Thus, $\vt$ depends on $\kappa$. Since $I_{\kappa}$ is block upper triangular, we obtain that $I''_{\kappa} \vt'' = h''$. Thus, $\vt'' = (I_{\kappa}'')^{-1}h''$. By Theorem \ref{thm:dfc1,t=l=1} $(b)$, we have
\begin{align}\label{eq:EPL_NONEMPTY_iff_VT_IS_POS}
\mbox{$E^{\kappa}_+ \neq \emptyset$ if and only if $\vartheta'' \in \RPL^{c''}$.}
\end{align}
Thus, our aim in this section is to obtain a formula for the coordinates of $\vt''$.

We formulate a purely graph theoretical lemma, which will be useful in Subsections \ref{subsec:chain} and \ref{subsec:tree-like}. The required graph theoretical notions can be found in Appendix \ref{sec:app_directed_graphs}.

\begin{lemma}\label{lemma:excessIkappa}
Let $(V,A)$ be a directed graph and let $\vt : V \to \R$ be any function. Let $\kappa : V \times V \to \RNN$ be a function for which $\kappa_{ij}>0$ if and only if $(i,j)\in A$. Define $z : A \to \R$ by $z_{ij}=\kappa_{ij}\vt_i$ $((i,j)\in A)$, let $I_\kappa$ be as in \eqref{eq:Ikappa}, and let $h = I_{\kappa}\vt$. Then
\begin{align*}
\excess_z(U)=\sum_{j \in U} h_j ~ \mbox{for all} ~ U \subseteq V.
\end{align*}
\end{lemma}
\begin{proof}
Fix $j \in V$. Then, by the definitions of the matrix $I_{\kappa}$ and the $\excess$ function we obtain
\begin{align*}
h_j = \sum_{i \in V} (I_{\kappa})_{ji} \vt_i = \sum_{i \in V \bs \{j\}} \kappa_{ij}\vt_i - \sum_{i \in V \bs \{j\}} \kappa_{ji}\vt_j = z(\varrho^{\inarc}(j)) - z(\varrho^{\outarc}(j)) = \excess_z(j).
\end{align*}
With this, the statement of the lemma follows from \eqref{eq:excess_additive}.
\end{proof}

\subsection{The case $(\mc{C}, \mc{R})$ is a \lq\lq chain''} \label{subsec:chain}

Assume for this subsection that the graph of complexes (i.e., $(\mc{C},\mc{R})$) takes the special form
\begin{align}\label{eq:xypic_chain}
\xymatrix{
 C_c\ar@<.5ex>[rr]^{\kappa_{c,c-1}}                                         & &
 C_{c-1} \ar@<.5ex>[rr]^{\kappa_{c-1,c-2}} \ar@<.5ex>[ll]^{\kappa_{c-1,c}}  & &
 \cdots \ar@<.5ex>[ll]^{\kappa_{c-2,c-1}} \ar@<.5ex>[rr]^{\kappa_{43}}     & &
 C_3 \ar@<.5ex>[ll]^{\kappa_{34}} \ar@<.5ex>[rr]^{\kappa_{32}}            & &
 C_2 \ar@<.5ex>[ll]^{\kappa_{23}} \ar@<0ex>[rr]^{\kappa_{21}}             & &
 C_1,}
\end{align}
where we also indicated the rate coefficients. Note that in this case $\mc{C}' = \{C_1\}$ and $\mc{C}'' = \{C_2,\ldots,C_c\}$, while the matrix $I_{\kappa}''$ is tridiagonal and moreover
\begin{align}\label{eq:colsums=0_exceptforC2}
\mbox{the column sums of $I_{\kappa}''$ are zero except the one corresponding to $C_2$.}
\end{align}
Clearly, \eqref{eq:colsums=0_exceptforC2} is a consequence of the fact that leaving the set $\mc{C}''$ is only possible through $C_2$ (i.e., using the notations introduced in Appendix \ref{sec:app_directed_graphs}, $\emptyset \neq \varrho^{\outarc}(\mc{C}'') \subseteq \varrho^{\outarc}(C_2)$). Proposition \ref{prop:chain_recursion} below provides a recursive formula for the coordinates of $\vartheta''$.

\begin{proposition}\label{prop:chain_recursion}
Let $(\mc{X},\mc{C},\mc{R},\kappa)$ be a deficiency-one mass action system for which $(\mc{C},\mc{R})$ takes the form \eqref{eq:xypic_chain}. Let $h$ and $\vt$ be as in \eqref{eq:fix_h} and \eqref{eq:fix_vt}, respectively. Then
\begin{align}
\label{eq:chain_vartheta2} \vartheta_2&=-\frac{1}{\kappa_{21}}\sum_{i=2}^c h_i ~ \mbox{and}\\
\label{eq:chain_varthetaj} \vartheta_j&=\frac{\kappa_{j-1,j}}{\kappa_{j,j-1}}\vartheta_{j-1} - \frac{1}{\kappa_{j,j-1}}\sum_{i=j}^c h_i~~(j=3,\ldots,c).
\end{align}
\end{proposition}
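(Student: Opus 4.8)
The plan is to apply Lemma \ref{lemma:excessIkappa} to a nested family of ``tail'' vertex sets and to read off the recursion from the very simple cut structure of a chain. Concretely, I would take $V = \mc{C}$, $A = \mc{R}$, and the flow $z : \mc{R} \to \R$ defined by $z_{ij} = \kappa_{ij}\vt_i$ exactly as in the lemma. Since $I_{\kappa}\vt = h$ by \eqref{eq:fix_vt}, the hypotheses of Lemma \ref{lemma:excessIkappa} hold verbatim, so $\excess_z(U) = \sum_{j \in U} h_j$ for every $U \subseteq \mc{C}$. The whole proof then reduces to evaluating both sides of this identity on well-chosen sets $U$.

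The sets I would use are the tails $U_j = \{C_j, C_{j+1}, \ldots, C_c\}$ for $j = 2, \ldots, c$; note $U_2 = \mc{C}''$. The key observation is that in the chain \eqref{eq:xypic_chain} these cuts are crossed by at most one arc in each direction. Indeed, the only arc leaving $U_j$ is $C_j \to C_{j-1}$ (rate $\kappa_{j,j-1}$), so $z(\varrho^{\outarc}(U_j)) = \kappa_{j,j-1}\vt_j$; for $j \geq 3$ the only arc entering $U_j$ is $C_{j-1} \to C_j$ (rate $\kappa_{j-1,j}$), so $z(\varrho^{\inarc}(U_j)) = \kappa_{j-1,j}\vt_{j-1}$; and for $j = 2$ no arc enters $U_2 = \mc{C}''$, because the arrow $C_2 \to C_1$ is one-directional, whence $z(\varrho^{\inarc}(U_2)) = 0$. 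Substituting these into $\excess_z(U_j) = z(\varrho^{\inarc}(U_j)) - z(\varrho^{\outarc}(U_j)) = \sum_{i=j}^{c} h_i$ yields, for $j = 2$, the relation $-\kappa_{21}\vt_2 = \sum_{i=2}^{c} h_i$, i.e.\ \eqref{eq:chain_vartheta2}, and for $j = 3, \ldots, c$ the relation $\kappa_{j-1,j}\vt_{j-1} - \kappa_{j,j-1}\vt_j = \sum_{i=j}^{c} h_i$, which rearranges into \eqref{eq:chain_varthetaj} after dividing by $\kappa_{j,j-1} > 0$.

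I do not anticipate a genuine obstacle once the lemma is invoked: the argument is purely a matter of correctly identifying the boundary arcs of each $U_j$. The only place demanding care is the bookkeeping at the two ends of the chain, namely verifying that $\varrho^{\inarc}(\mc{C}'') = \emptyset$ (there is no reverse arc $C_1 \to C_2$) and that each interior cut separating $\{C_{j-1}\}$ from $U_j$ is crossed by exactly the two arcs of rates $\kappa_{j-1,j}$ and $\kappa_{j,j-1}$. This is precisely the structural fact recorded in \eqref{eq:colsums=0_exceptforC2}, that leaving $\mc{C}''$ is possible only through $C_2$, so the computation is internally consistent with the earlier remarks.
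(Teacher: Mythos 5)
Your proof is correct and follows exactly the paper's own argument: apply Lemma \ref{lemma:excessIkappa} to the tail sets $U_j=\{C_j,\ldots,C_c\}$ (with $U_2=\mc{C}''$) and read off the recursion from the one or two arcs crossing each cut. Your additional bookkeeping of the boundary arcs is just a more explicit rendering of the same computation.
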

\begin{proof}
Application of Lemma \ref{lemma:excessIkappa} with $U = \mc{C}''$ yields $-\kappa_{21}\vartheta_2 = \sum_{i=2}^c h_i$. This proves \eqref{eq:chain_vartheta2}.

Fix $j \in \{3, \ldots, c\}$. Application of Lemma \ref{lemma:excessIkappa} with $U=\{C_j,\ldots,C_c\}$ yields
\begin{align*}
\kappa_{j-1,j}\vartheta_{j-1} - \kappa_{j,j-1}\vartheta_j = \sum_{i=j}^c h_i.
\end{align*}
This proves \eqref{eq:chain_varthetaj}.
\end{proof}

As a corollary of Proposition \ref{prop:chain_recursion}, we obtain directly a characterisation of those reaction networks in Proposition \ref{prop:chain_recursion} for which there exist rate coefficients such that the resulting mass action system has a positive steady state. Similarly, a characterisation is given for those networks for which the resulting mass action system has a positive steady state regardless of the values of the rate coefficients.

\begin{corollary}\label{cor:chain_exists}
Let $(\mc{X},\mc{C},\mc{R})$ be a deficiency-one reaction network for which $(\mc{C},\mc{R})$ takes the form \eqref{eq:xypic_chain}. Let $h \in \R^c$ be as in \eqref{eq:fix_h}. Then there exists $\kappa:\mc{R}\to\RPL$ such that $E^{\kappa}_+\neq\emptyset$ if and only if
\begin{align*}
\sum_{i=2}^c h_i<0.
\end{align*}
\end{corollary}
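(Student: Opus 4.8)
The plan is to use the recursive formula from Proposition \ref{prop:chain_recursion} to understand when the rate coefficients can be chosen so that every coordinate of $\vartheta''$ is positive, since by \eqref{eq:EPL_NONEMPTY_iff_VT_IS_POS} this is exactly the condition $E^{\kappa}_+ \neq \emptyset$. First I would observe that $\vartheta_2 = -\frac{1}{\kappa_{21}}\sum_{i=2}^c h_i$, and since $\kappa_{21}>0$, the sign of $\vartheta_2$ is determined entirely by the sign of $\sum_{i=2}^c h_i$. In particular $\vartheta_2 > 0$ if and only if $\sum_{i=2}^c h_i < 0$. This immediately gives the necessity direction: if there exists some $\kappa$ with $E^{\kappa}_+ \neq \emptyset$, then $\vartheta'' \in \RPL^{c''}$, so in particular $\vartheta_2 > 0$, forcing $\sum_{i=2}^c h_i < 0$.

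For the sufficiency direction, assume $\sum_{i=2}^c h_i < 0$; I want to exhibit a choice of rate coefficients making all of $\vartheta_2, \ldots, \vartheta_c$ positive. The key structural observation is in the recursion \eqref{eq:chain_varthetaj}: given that $\vartheta_{j-1} > 0$ has already been secured, we have
\begin{align*}
\vartheta_j = \frac{\kappa_{j-1,j}}{\kappa_{j,j-1}}\vartheta_{j-1} - \frac{1}{\kappa_{j,j-1}}\sum_{i=j}^c h_i.
\end{align*}
The first term is a positive multiple of $\vartheta_{j-1}$ whose size we can inflate arbitrarily by choosing the ratio $\kappa_{j-1,j}/\kappa_{j,j-1}$ large, while the (fixed) subtracted term $\frac{1}{\kappa_{j,j-1}}\sum_{i=j}^c h_i$ can be made small by taking $\kappa_{j,j-1}$ large. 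Either way, once $\vartheta_{j-1}>0$ is fixed, we can guarantee $\vartheta_j > 0$. So the natural strategy is an induction on $j$ from $2$ up to $c$, at each stage choosing the relevant rate coefficients to push the next coordinate positive.

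A clean way to carry this out is to choose the rate coefficients greedily in order of increasing $j$. Pick $\kappa_{21}$ arbitrarily (say $\kappa_{21}=1$); then $\vartheta_2 = -\sum_{i=2}^c h_i > 0$ by hypothesis. Now for each $j = 3, \ldots, c$ in turn, having already fixed $\vartheta_{j-1}>0$, choose $\kappa_{j-1,j}$ and $\kappa_{j,j-1}$ so that $\frac{\kappa_{j-1,j}}{\kappa_{j,j-1}}\vartheta_{j-1} > \frac{1}{\kappa_{j,j-1}}\sum_{i=j}^c h_i$, which holds for instance whenever $\kappa_{j-1,j}\vartheta_{j-1} > \sum_{i=j}^c h_i$; since the right-hand side is a fixed real number and $\vartheta_{j-1}>0$, taking $\kappa_{j-1,j}$ large enough (with $\kappa_{j,j-1}$ fixed to any positive value) makes this strict. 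This determines a valid $\kappa:\mc{R}\to\RPL$ for which $\vartheta'' \in \RPL^{c''}$, and hence $E^{\kappa}_+ \neq \emptyset$ by \eqref{eq:EPL_NONEMPTY_iff_VT_IS_POS}.

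I do not anticipate a genuine obstacle here, since the recursion decouples the coordinates in a triangular fashion and each new rate coefficient affects only the step in which it is introduced; the main point to be careful about is the \emph{ordering} of the choices — one must fix the coefficients sequentially so that each $\vartheta_{j-1}$ is already a determined positive constant before choosing the coefficients that control $\vartheta_j$, and verify that no later choice disturbs an earlier coordinate. Since $\vartheta_j$ depends only on $\vartheta_{j-1}$ and on $\kappa_{j-1,j}, \kappa_{j,j-1}$, the choices are indeed independent across steps, so the induction goes through cleanly.
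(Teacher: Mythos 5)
Your proposal is correct and takes essentially the same route as the paper, whose proof of this corollary is simply the observation that the statement follows from \eqref{eq:EPL_NONEMPTY_iff_VT_IS_POS} and Proposition \ref{prop:chain_recursion}; you have merely spelled out the greedy sequential choice of the $\kappa_{j-1,j}$ that the paper leaves implicit. Your care about the ordering of the choices is well placed but unproblematic, since, as you note, $\kappa_{j-1,j}$ and $\kappa_{j,j-1}$ enter the recursion only at step $j$.
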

\begin{proof}
The statement follows directly from \eqref{eq:EPL_NONEMPTY_iff_VT_IS_POS} and Proposition \ref{prop:chain_recursion}.
\end{proof}

\begin{corollary}\label{cor:chain_forall}
Let $(\mc{X},\mc{C},\mc{R})$ be a deficiency-one reaction network for which $(\mc{C},\mc{R})$ takes the form \eqref{eq:xypic_chain}. Let $h \in \R^c$ be as in \eqref{eq:fix_h}. Then for all $\kappa:\mc{R}\to\RPL$ we have $E^{\kappa}_+\neq\emptyset$ if and only if
\begin{align*}
\begin{split}
&\sum_{i=2}^c h_i<0 ~ \mbox{and} \\
&\sum_{i=j}^c h_i \leq 0 ~ \mbox{for all} ~ j \in \{3, \ldots, c\}.
\end{split}
\end{align*}
\end{corollary}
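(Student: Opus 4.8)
**The plan is to derive both directions from the recursive formula in Proposition \ref{prop:chain_recursion}, interpreting the condition as the requirement that every partial tail-sum of $h$ behaves correctly.**

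First I would recall from \eqref{eq:EPL_NONEMPTY_iff_VT_IS_POS} that, for a fixed $\kappa$, the statement $E^{\kappa}_+ \neq \emptyset$ is equivalent to $\vartheta_j > 0$ for all $j \in \{2, \ldots, c\}$. So the "for all $\kappa$" statement in the corollary becomes: $\vartheta_j > 0$ holds for every $j$ and every choice of positive rate coefficients. I would then read the recursion \eqref{eq:chain_vartheta2}--\eqref{eq:chain_varthetaj} as expressing each $\vartheta_j$ in terms of $\vartheta_{j-1}$ and the tail-sum $\sum_{i=j}^c h_i$. The base case \eqref{eq:chain_vartheta2} gives $\vartheta_2 = -\frac{1}{\kappa_{21}}\sum_{i=2}^c h_i$; since $\kappa_{21} > 0$ is arbitrary positive, $\vartheta_2 > 0$ for all $\kappa$ is equivalent to $\sum_{i=2}^c h_i < 0$, which is precisely the first listed condition (and matches Corollary \ref{cor:chain_exists}).

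For the inductive step I would argue both implications. For sufficiency, assuming $\sum_{i=2}^c h_i < 0$ and $\sum_{i=j}^c h_i \leq 0$ for $j \geq 3$, I would show by induction that $\vartheta_j > 0$ for all $\kappa$: given $\vartheta_{j-1} > 0$, the first term $\frac{\kappa_{j-1,j}}{\kappa_{j,j-1}}\vartheta_{j-1}$ in \eqref{eq:chain_varthetaj} is strictly positive, while $-\frac{1}{\kappa_{j,j-1}}\sum_{i=j}^c h_i \geq 0$ by hypothesis, so their sum is strictly positive. For necessity, I would contrapose: if some tail-sum $\sum_{i=j}^c h_i > 0$ with $j \geq 3$, I would exhibit a bad $\kappa$. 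The natural move is to fix all coefficients except $\kappa_{j,j-1}$ and let $\kappa_{j,j-1} \to \infty$; then $\frac{\kappa_{j-1,j}}{\kappa_{j,j-1}}\vartheta_{j-1} \to 0$ while $-\frac{1}{\kappa_{j,j-1}}\sum_{i=j}^c h_i$ stays negative (and bounded away from $0$ relative to the vanishing first term), forcing $\vartheta_j < 0$ for large $\kappa_{j,j-1}$.

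The step I expect to be the main obstacle is the necessity argument, because one must be careful that $\vartheta_{j-1}$ itself may depend on the coefficients being varied. The clean way around this is to vary only $\kappa_{j,j-1}$ (and possibly $\kappa_{j-1,j}$), which appear in \eqref{eq:chain_varthetaj} for index $j$ but do \emph{not} enter the formulas for $\vartheta_2, \ldots, \vartheta_{j-1}$ — those depend only on $\kappa_{21}, \ldots, \kappa_{j-1,j-2}$ and $\kappa_{23}, \ldots, \kappa_{j-2,j-1}$. Hence $\vartheta_{j-1}$ can be held fixed at some positive value while $\kappa_{j,j-1}$ is sent to infinity, isolating the sign of the tail-sum and completing the proof.
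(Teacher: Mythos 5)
Your overall strategy coincides with the paper's (the paper's proof of this corollary is simply the remark that it follows directly from \eqref{eq:EPL_NONEMPTY_iff_VT_IS_POS} and Proposition \ref{prop:chain_recursion}), and your sufficiency direction is correct. The concrete limit you propose for the necessity direction, however, fails. Writing $S_j = \sum_{i=j}^c h_i$, formula \eqref{eq:chain_varthetaj} reads
\begin{align*}
\vartheta_j = \frac{1}{\kappa_{j,j-1}}\left(\kappa_{j-1,j}\,\vartheta_{j-1} - S_j\right),
\end{align*}
so the two terms you compare both scale as $1/\kappa_{j,j-1}$: the sign of $\vartheta_j$ equals the sign of $\kappa_{j-1,j}\vartheta_{j-1} - S_j$ and is completely unaffected by $\kappa_{j,j-1}$. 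Your parenthetical claim that the second term stays \lq\lq bounded away from $0$ relative to the vanishing first term'' is false --- the ratio of the two terms is a constant independent of $\kappa_{j,j-1}$ --- and if, say, $\kappa_{j-1,j}\vartheta_{j-1} > S_j > 0$, then $\vartheta_j$ remains positive for \emph{every} value of $\kappa_{j,j-1}$. Hence letting $\kappa_{j,j-1}\to\infty$ can never produce the required bad $\kappa$.

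The repair is the knob you mention only in passing: vary $\kappa_{j-1,j}$ instead, sending it to $0^+$ with everything else fixed. Your observation that $\vartheta_2,\ldots,\vartheta_{j-1}$ involve neither $\kappa_{j-1,j}$ nor $\kappa_{j,j-1}$ is correct and is exactly what legitimises this: $\vartheta_{j-1}$ is held fixed, and if $\vartheta_{j-1}\leq0$ one is already done, while if $\vartheta_{j-1}>0$ then any choice $\kappa_{j-1,j} < S_j/\vartheta_{j-1}$ gives $\kappa_{j-1,j}\vartheta_{j-1} - S_j < 0$, hence $\vartheta_j<0$ and $E^{\kappa}_+=\emptyset$ by \eqref{eq:EPL_NONEMPTY_iff_VT_IS_POS}. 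With that substitution your argument is complete.
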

\begin{proof}
The statement follows directly from \eqref{eq:EPL_NONEMPTY_iff_VT_IS_POS} and Proposition \ref{prop:chain_recursion}.
\end{proof}

\subsection{The case $(\mc{C}, \mc{R})$ is \lq\lq tree-like''} \label{subsec:tree-like}

In this subsection, we generalise the results obtained in Subsection \ref{subsec:chain}. We will not pose in this subsection the condition that the graph of complexes $(\mc{C},\mc{R})$ takes the form \eqref{eq:xypic_chain}, rather we assume that $(\mc{C},\mc{R})$ satisfies
\begin{align}
\label{eq:tree-like_1} &\ell=t=1 ~ \mbox{and} ~ (\mc{C},\mc{R}) ~ \mbox{is not strongly connected},\\
\label{eq:tree-like_2} &\mbox{there exists a unique}~l\in\mc{C}''~\mbox{such that}~\varrho^{\outarc}(l)\cap\varrho^{\outarc}(\mc{C}'')\neq\emptyset,~\mbox{and}\\
\label{eq:tree-like_3} &\mbox{for all $i\in\mc{C}''$ there exists a unique directed path from $i$ to $l$.}
\end{align}

Thus, by \eqref{eq:tree-like_2}, we have $\emptyset \neq \varrho^{\outarc}(\mc{C}'') \subseteq \varrho^{\outarc}(l)$. Consider that the graph of complexes $(\mc{C},\mc{R})$ takes the form
\begin{align}\label{eq:xypic_tree-like}
\xymatrix{
C_{17}\ar@<.5ex>[r] & C_{16}\ar@<.5ex>[rd]
                       \ar@<.5ex>[l]       & C_{10}\ar[r]         & C_9\ar@<.5ex>[r] & C_8\ar[rd]
                                                                                     \ar@<.5ex>[l]     &                   & C_1\ar[d] & C_6\ar[l] \\
C_{18}\ar[ru]  & C_{21}\ar[rd]        & C_{15}\ar[rd]
                                              \ar@<.5ex>[lu] & C_{11}\ar[ru]    & C_{12}\ar[r]         & C_7\ar[ru]
                                                                                                            \ar[r]
                                                                                                            \ar[rd]
                                                                                                            \ar@<.5ex>[ld] & C_2\ar[d] & C_5\ar[u] \\
C_{19}\ar[ruu] & C_{22}\ar@<.5ex>[r]  & C_{20}\ar[r]
                                              \ar@<.5ex>[l]  & C_{14}\ar[r]     & C_{13}\ar@<.5ex>[ru] &                  & C_3\ar[r] & C_4.\ar[u] \\
}
\end{align}
Note that $\mc{C}' = \{C_1,\ldots,C_6\}$ and $\mc{C}'' = \{C_7, \ldots, C_{22} \}$ for \eqref{eq:xypic_tree-like}. Also, \eqref{eq:xypic_tree-like} satisfies \eqref{eq:tree-like_1}, \eqref{eq:tree-like_2}, and \eqref{eq:tree-like_3} with $l = C_7$.

Clearly, the graph in \eqref{eq:xypic_chain} satisfies \eqref{eq:tree-like_1}, \eqref{eq:tree-like_2}, and \eqref{eq:tree-like_3} with $l = C_2$. Thus, the results of this subsection are indeed generalisations of the results of Subsection \ref{subsec:chain}.

For $j\in \mc{C}''$ denote by $P_j$ the unique directed path from $j$ to $l$ and for $i \in \mc{C}''$ define $U(i) \subseteq \mc{C}''$, called the \emph{set of descendants} of $i$, by
\begin{align*}
U(i) = \{j \in \mc{C}'' ~|~ i \in V[P_j] \},
\end{align*}
i.e., we collect those vertices for which the unique directed path to $l$ traverses $i$. Also, for $j \in \mc{C}'' \bs \{l\}$ define $p(j) \in V[P_j]$, called the \emph{parent} of $j$, by the implicit definition
\begin{align*}
\len(P_{p(j)}) = \len(P_j) - 1,
\end{align*}
i.e., the parent of $j$ is the second vertex on the unique directed path from $j$ to $l$. For example, for \eqref{eq:xypic_tree-like} we have $p(C_{20}) = C_{14}$ and $U(C_{20}) = \{C_{20},C_{21},C_{22}\}$.

We are now ready to state and prove the generalisation of Proposition \ref{prop:chain_recursion}.

\begin{proposition}\label{prop:tree-like_recursion}
Let $(\mc{X},\mc{C},\mc{R},\kappa)$ be a deficiency-one mass action system for which $(\mc{C},\mc{R})$ satisfies \eqref{eq:tree-like_1}, \eqref{eq:tree-like_2}, and \eqref{eq:tree-like_3}. Let $h$ and $\vt$ be as in \eqref{eq:fix_h} and \eqref{eq:fix_vt}, respectively. Then
\begin{align}
\label{eq:tree-like_varthetal}\vartheta_l&=-\frac{1}{\sum_{l'\in\mc{C}'}\kappa_{l,l'}}\sum_{i\in \mc{C}''} h_i,\\
\label{eq:tree-like_varthetaj}\vartheta_j&=\frac{\kappa_{p(j),j}}{\kappa_{j,p(j)}}\vartheta_{p(j)} - \frac{1}{\kappa_{j,p(j)}}\sum_{i\in U(j)} h_i~~(j\in \mc{C}''\bs\{l\}).
\end{align}
\end{proposition}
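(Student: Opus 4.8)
The plan is to mimic the proof of Proposition~\ref{prop:chain_recursion}, applying Lemma~\ref{lemma:excessIkappa} to carefully chosen vertex subsets $U\subseteq\mc{C}''$ of the directed graph $(\mc{C},\mc{R})$, with $\vt$ the function from \eqref{eq:fix_vt} and $z_{ij}=\kappa_{ij}\vt_i$ the induced arc function. The key idea is that the tree-like structure \eqref{eq:tree-like_2}--\eqref{eq:tree-like_3} makes the set of arcs leaving a suitable $U$ a \emph{single} arc, so that $\excess_z(U)$ collapses to one or two terms. Lemma~\ref{lemma:excessIkappa} then equates this excess to $\sum_{i\in U}h_i$, and solving for $\vt_j$ gives the stated recursion.

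First I would prove \eqref{eq:tree-like_varthetal}. Apply Lemma~\ref{lemma:excessIkappa} with $U=\mc{C}''$. By \eqref{eq:tree-like_2} the only arcs leaving $\mc{C}''$ emanate from $l$, namely the arcs $(l,l')$ with $l'\in\mc{C}'$, and since $(\mc{C},\mc{R})$ has $t=1$ with terminal class $\mc{C}'$, there are no arcs entering $\mc{C}''$ from outside. Hence
\begin{align*}
\sum_{i\in\mc{C}''}h_i=\excess_z(\mc{C}'')=-z(\varrho^{\outarc}(\mc{C}''))=-\sum_{l'\in\mc{C}'}\kappa_{l,l'}\vt_l,
\end{align*}
which rearranges to \eqref{eq:tree-like_varthetal}.

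Next I would prove \eqref{eq:tree-like_varthetaj}. Fix $j\in\mc{C}''\bs\{l\}$ and apply Lemma~\ref{lemma:excessIkappa} with $U=U(j)$, the set of descendants of $j$. The crucial structural claim is that $U(j)$ has exactly one leaving arc and one entering arc relevant to the excess, both involving the pair $\{j,p(j)\}$: every directed path from a vertex of $\mc{C}''$ to $l$ passes through $j$ precisely when its start lies in $U(j)$, and the unique path $P_j$ exits $U(j)$ via the arc $(j,p(j))$; conversely the reverse arc $(p(j),j)$ is the only arc entering $U(j)$, because any arc entering $U(j)$ would, by uniqueness of paths \eqref{eq:tree-like_3}, have to be the ``backward'' arc along $P_j$. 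Granting this, $\varrho^{\outarc}(U(j))=\{(j,p(j))\}$ and $\varrho^{\inarc}(U(j))=\{(p(j),j)\}$, so
\begin{align*}
\sum_{i\in U(j)}h_i=\excess_z(U(j))=\kappa_{p(j),j}\vt_{p(j)}-\kappa_{j,p(j)}\vt_j,
\end{align*}
and solving for $\vt_j$ yields \eqref{eq:tree-like_varthetaj}.

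I expect the main obstacle to be verifying the structural claim that $U(j)$ has exactly the single entering arc $(p(j),j)$ and single leaving arc $(j,p(j))$, since this is where the hypotheses \eqref{eq:tree-like_2} and \eqref{eq:tree-like_3} must be used in full. The leaving-arc part follows because any arc from $U(j)$ to $\mc{C}''\bs U(j)$ would give, together with the unique path structure, a second path from its tail to $l$ avoiding $j$, contradicting $j\in V[P_{\text{tail}}]$; and an arc from $U(j)$ directly into $\mc{C}'$ is excluded by \eqref{eq:tree-like_2} unless it is an arc out of $l$, but $l\in U(j)$ only when $j=l$, which is excluded. The entering-arc part is the more delicate one: I would argue that any arc $(a,b)$ with $a\notin U(j)$, $b\in U(j)$ forces a directed path from $a$ to $l$ through $b$ and hence through $j$, placing $a\in U(j)$, a contradiction, unless $(a,b)$ is the backward arc $(p(j),j)$ itself. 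Once this combinatorial bookkeeping is pinned down, the rest is a direct substitution into Lemma~\ref{lemma:excessIkappa}.
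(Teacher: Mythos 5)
Your overall strategy coincides with the paper's: apply Lemma \ref{lemma:excessIkappa} with $U=\mc{C}''$ to get \eqref{eq:tree-like_varthetal} and with $U=U(j)$ to get \eqref{eq:tree-like_varthetaj}. You are also right that everything hinges on the structural claim $\varrho^{\outarc}(U(j))=\{(j,p(j))\}$ and $\varrho^{\inarc}(U(j))\subseteq\{(p(j),j)\}$ (note it must be an inclusion, not an equality: the arc $(p(j),j)$ need not belong to $\mc{R}$, e.g. $(C_{14},C_{20})\notin\mc{R}$ in \eqref{eq:xypic_tree-like}, which is why $\kappa_{p(j),j}=0$ is allowed in the subsequent corollaries). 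Your treatment of the leaving arcs is essentially sound once you also handle the case where the tail of the candidate arc lies on $V[P_b]$.

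The gap is in the entering-arc claim, exactly where you flagged the delicacy. Your argument is that an arc $(a,b)$ with $a\notin U(j)$ and $b\in U(j)$ ``forces a directed path from $a$ to $l$ through $b$ and hence through $j$''. But $\con((a,b),P_b)$ is only a directed \emph{walk}; when $a\in V[P_b]$ it is not a path, and the path from $a$ to $l$ one extracts from it may bypass $j$ entirely, so no contradiction with $a\notin U(j)$ arises. Concretely, take $\mc{C}'=\{1\}$, $\mc{C}''=\{2,3,4,5\}$, $\mc{R}=\{(2,1),(3,2),(4,3),(5,4),(3,5)\}$: conditions \eqref{eq:tree-like_1}, \eqref{eq:tree-like_2}, \eqref{eq:tree-like_3} all hold with $l=2$, yet for $j=4$ one has $U(4)=\{4,5\}$, $p(4)=3$, and $\varrho^{\inarc}(U(4))=\{(3,5)\}$, so that $\excess_z(U(4))=\kappa_{35}\vt_3-\kappa_{43}\vt_4$ rather than $\kappa_{34}\vt_3-\kappa_{43}\vt_4$, and \eqref{eq:tree-like_varthetaj} fails unless $\vt_3=0$. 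So the structural claim is not a consequence of \eqref{eq:tree-like_1}--\eqref{eq:tree-like_3} as literally stated; what is actually needed is that every arc of $\mc{R}$ with both endpoints in $\mc{C}''$ joins some vertex $j$ to its parent $p(j)$ (in one direction or the other), i.e., that ``chords'' from a vertex into the set of its non-child descendants are excluded --- which is what the intended reading of ``tree-like'' and the example \eqref{eq:xypic_tree-like} suggest. Be aware that the paper's own one-line proof silently relies on the same unproved fact, so to make your proof complete you must either add this as a hypothesis or restrict to graphs where the five-vertex configuration above cannot occur; it cannot be derived from the three stated conditions alone.
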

\begin{proof}
Application of Lemma \ref{lemma:excessIkappa} with $U = \mc{C}''$ yields $-\sum_{l'\in\mc{C}'}\kappa_{l,l'}\vartheta_l = \sum_{i \in \mc{C}''} h_i$. This proves \eqref{eq:tree-like_varthetal}.

Fix $j \in \mc{C}'' \bs \{l\}$. Application of Lemma \ref{lemma:excessIkappa} with $U=U(j)$ yields
\begin{align*}
\kappa_{p(j),j}\vartheta_{p(j)} - \kappa_{j,p(j)}\vartheta_j = \sum_{i \in U(j)} h_i.
\end{align*}
This proves \eqref{eq:tree-like_varthetaj}.
\end{proof}

As consequences of Proposition \ref{prop:tree-like_recursion}, we obtain directly the generalisations of Corollaries \ref{cor:chain_exists} and \ref{cor:chain_forall}.

\begin{corollary}\label{cor:tree-like_exists}
Let $(\mc{X},\mc{C},\mc{R})$ be a deficiency-one reaction network for which $(\mc{C},\mc{R})$ satisfies \eqref{eq:tree-like_1}, \eqref{eq:tree-like_2}, and \eqref{eq:tree-like_3}. Let $h \in \R^c$ be as in \eqref{eq:fix_h}. Then there exists $\kappa:\mc{R}\to\RPL$ such that $E^{\kappa}_+\neq\emptyset$ if and only if
\begin{align*}
\begin{split}
&\sum_{i\in \mc{C}''} h_i < 0~\mbox{and} \\
&\sum_{i\in U(j)} h_i < 0~\mbox{for all}~j\in\mc{C}''\bs\{l\}~\mbox{that satisfies}~\kappa_{p(j),j}=0.
\end{split}
\end{align*}
\end{corollary}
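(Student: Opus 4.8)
The plan is to combine the equivalence \eqref{eq:EPL_NONEMPTY_iff_VT_IS_POS}, which reduces the existence of a positive steady state to the positivity of all coordinates of $\vartheta''$, with the recursion of Proposition \ref{prop:tree-like_recursion}. First I would settle the reading of the hypothesis: since $\kappa$ is quantified existentially over all functions $\mc{R}\to\RPL$, the clause ``$\kappa_{p(j),j}=0$'' cannot refer to a particular choice of $\kappa$; under the convention that extends $\kappa$ by zero off $\mc{R}$, it is equivalent to the purely structural statement that the reverse arc $(p(j),j)$ is absent from $\mc{R}$. Thus the second condition is a constraint on $h$ indexed exactly by those $j\in\mc{C}''\bs\{l\}$ whose edge to its parent carries no back-arc, and both displayed conditions are $\kappa$-independent. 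I would then clear denominators in the recursion, writing $\kappa_{j,p(j)}\vartheta_j=\kappa_{p(j),j}\vartheta_{p(j)}-\sum_{i\in U(j)}h_i$ for $j\in\mc{C}''\bs\{l\}$ and $(\sum_{l'\in\mc{C}'}\kappa_{l,l'})\vartheta_l=-\sum_{i\in\mc{C}''}h_i$, and observe that the multipliers $\kappa_{j,p(j)}$ and $\sum_{l'\in\mc{C}'}\kappa_{l,l'}$ are strictly positive, because the arc $(j,p(j))$ is the first arc of $P_j$ and, by \eqref{eq:tree-like_2}, $l$ has at least one arc into $\mc{C}'$.

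For necessity, suppose some $\kappa$ yields $\vartheta''\in\RPL^{c''}$. Positivity of $\vartheta_l$ together with $\sum_{l'\in\mc{C}'}\kappa_{l,l'}>0$ forces $\sum_{i\in\mc{C}''}h_i<0$ at once. For any $j$ with the reverse arc absent, the cleared recursion collapses to $\kappa_{j,p(j)}\vartheta_j=-\sum_{i\in U(j)}h_i$; since the left-hand side is positive, $\sum_{i\in U(j)}h_i<0$. This gives both displayed conditions.

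For sufficiency, I would construct $\kappa$ by processing $\mc{C}''$ in a top-down order of the tree rooted at $l$ (parents before children), using that $\vartheta_{p(j)}$ depends only on $h$ and on the arcs lying strictly closer to the root than $j$, hence not on $\kappa_{p(j),j}$ or $\kappa_{j,p(j)}$. Choosing the arcs out of $l$ arbitrarily positive makes $\vartheta_l=-(\sum_{i\in\mc{C}''}h_i)/(\sum_{l'\in\mc{C}'}\kappa_{l,l'})>0$ by the first hypothesis. Inductively, with $\vartheta_{p(j)}>0$ already secured: if the reverse arc is absent then $\vartheta_j=-(\sum_{i\in U(j)}h_i)/\kappa_{j,p(j)}>0$ for every positive $\kappa_{j,p(j)}$ by the second hypothesis; if the reverse arc is present, pick $\kappa_{j,p(j)}$ arbitrarily positive and then choose $\kappa_{p(j),j}$ large enough that $\kappa_{p(j),j}\vartheta_{p(j)}>\sum_{i\in U(j)}h_i$, which is possible precisely because $\vartheta_{p(j)}>0$. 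Either way $\vartheta_j>0$; all rate coefficients on arcs not appearing in the recursion are set to an arbitrary positive value. By \eqref{eq:EPL_NONEMPTY_iff_VT_IS_POS} the resulting system satisfies $E^{\kappa}_+\neq\emptyset$.

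The main obstacle is the consistency of the sufficiency construction: one must be sure that enlarging $\kappa_{p(j),j}$ to boost $\vartheta_j$ cannot retroactively destroy the positivity of an already-fixed coordinate. This is exactly where the tree order does the work—the arcs $\kappa_{p(j),j}$ and $\kappa_{j,p(j)}$ enter only the recursion for $\vartheta_j$ and, through $\vartheta_j$, those of its descendants $U(j)$, never that of $\vartheta_{p(j)}$ or of any vertex outside $U(j)$—so the coefficients can be fixed one edge at a time without circularity, and the induction closes.
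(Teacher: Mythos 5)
Your proposal is correct and takes essentially the same route as the paper, whose proof of this corollary is simply the observation that it follows from \eqref{eq:EPL_NONEMPTY_iff_VT_IS_POS} and Proposition \ref{prop:tree-like_recursion}; you have filled in exactly the details that the paper leaves implicit, namely the structural reading of the clause $\kappa_{p(j),j}=0$ as $(p(j),j)\notin\mc{R}$ (so that both displayed conditions are independent of $\kappa$) and the top-down inductive choice of rate coefficients for the sufficiency direction. No gaps.
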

\begin{proof}
The statement follows directly from \eqref{eq:EPL_NONEMPTY_iff_VT_IS_POS} and Proposition \ref{prop:tree-like_recursion}.
\end{proof}

\begin{corollary}\label{cor:tree-like_forall}
Let $(\mc{X},\mc{C},\mc{R})$ be a deficiency-one reaction network for which $(\mc{C},\mc{R})$ satisfies \eqref{eq:tree-like_1}, \eqref{eq:tree-like_2}, and \eqref{eq:tree-like_3}. Let $h \in \R^c$ be as in \eqref{eq:fix_h}. Then for all $\kappa:\mc{R}\to\RPL$ we have $E^{\kappa}_+\neq\emptyset$ if and only if
\begin{align*}
\begin{split}
&\sum_{i\in \mc{C}''} h_i<0, \\
&\sum_{i\in U(j)} h_i < 0~\mbox{for all}~j\in\mc{C}''\bs\{l\}~\mbox{that satisfies}~\kappa_{p(j),j}=0,~\mbox{and} \\
&\sum_{i\in U(j)} h_i \leq 0~\mbox{for all}~j\in\mc{C}''\bs\{l\}~\mbox{that satisfies}~\kappa_{p(j),j}>0.
\end{split}
\end{align*}
\end{corollary}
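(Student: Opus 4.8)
The plan is to push everything through \eqref{eq:EPL_NONEMPTY_iff_VT_IS_POS}, which says $E^{\kappa}_+\neq\emptyset$ exactly when $\vartheta_j>0$ for every $j\in\mc{C}''$; so the claim is that the three displayed conditions hold if and only if, for \emph{every} $\kappa$, all coordinates of $\vartheta''$ are positive. Abbreviating $S(j)=\sum_{i\in U(j)}h_i$, Proposition \ref{prop:tree-like_recursion} gives $\vartheta_l>0$ iff $S(l)=\sum_{i\in\mc{C}''}h_i<0$ (the denominator $\sum_{l'\in\mc{C}'}\kappa_{l,l'}$ is always positive by \eqref{eq:tree-like_2}), and for $j\neq l$ the rewritten recursion $\kappa_{j,p(j)}\vartheta_j=\kappa_{p(j),j}\vartheta_{p(j)}-S(j)$ shows, since $\kappa_{j,p(j)}>0$ (the arc $(j,p(j))$ lies on $P_j$, hence is present for all $\kappa$), that $\vartheta_j>0$ iff $\kappa_{p(j),j}\vartheta_{p(j)}>S(j)$. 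The structural remark that makes the case split legitimate is that $\kappa_{p(j),j}=0$ precisely when the arc $(p(j),j)$ is absent from $\mc{R}$; this is a property of the network alone and is the same for every $\kappa$.

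For the \lq\lq if'' direction I would fix an arbitrary $\kappa$ and prove $\vartheta_j>0$ for all $j\in\mc{C}''$ by induction on $\len(P_j)$. The base case $j=l$ is immediate from the first condition $S(l)<0$. For the inductive step I assume $\vartheta_{p(j)}>0$. If $\kappa_{p(j),j}=0$ then $\vartheta_j=-S(j)/\kappa_{j,p(j)}$, and the second condition ($S(j)<0$) gives $\vartheta_j>0$. If $\kappa_{p(j),j}>0$ then $\kappa_{p(j),j}\vartheta_{p(j)}>0\geq S(j)$ by the induction hypothesis and the third condition, so $\vartheta_j>0$ again.

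For the \lq\lq only if'' direction I would assume $\vartheta''\in\RPL^{c''}$ for every $\kappa$ and recover the conditions one at a time. Evaluating the hypothesis at any single $\kappa$ forces $\vartheta_l>0$, hence $S(l)<0$, giving the first condition. For the second, when $\kappa_{p(j),j}=0$ (arc absent, so for every $\kappa$) we have $\vartheta_j=-S(j)/\kappa_{j,p(j)}>0$, forcing $S(j)<0$. The third condition is the one point needing care, and I expect it to be the main obstacle: with the arc $(p(j),j)$ present I must exclude $S(j)>0$. The key observation is that $\vartheta_{p(j)}$ is produced by the recursion purely from the rate coefficients of the arcs joining consecutive vertices of $P_{p(j)}$ in either direction together with the $\kappa_{l,l'}$ ($l'\in\mc{C}'$), none of which is $\kappa_{p(j),j}$; hence $\vartheta_{p(j)}$ is independent of $\kappa_{p(j),j}$. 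I would therefore fix all rates but $\kappa_{p(j),j}$ (say all equal to $1$), so that $\vartheta_{p(j)}$ is a fixed positive number by the standing hypothesis, and then let $\kappa_{p(j),j}=\eps\to0^+$. The numerator $\eps\,\vartheta_{p(j)}-S(j)\to-S(j)$, so if $S(j)>0$ were true then $\vartheta_j<0$ for small $\eps$, contradicting the hypothesis applied to this perturbed $\kappa$. Thus $S(j)\leq0$, which is the third condition, completing the proof.
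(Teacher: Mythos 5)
Your proposal is correct and takes essentially the same route as the paper: the paper's own proof is a one-line appeal to \eqref{eq:EPL_NONEMPTY_iff_VT_IS_POS} and Proposition \ref{prop:tree-like_recursion}, and you have simply filled in the details that appeal presupposes (the induction along $\len(P_j)$ for sufficiency, and the perturbation $\kappa_{p(j),j}=\eps\to0^+$ --- legitimate because $\vartheta_{p(j)}$ does not involve $\kappa_{p(j),j}$ --- to rule out $\sum_{i\in U(j)}h_i>0$ when the arc $(p(j),j)$ is present). No gaps.
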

\begin{proof}
The statement follows directly from \eqref{eq:EPL_NONEMPTY_iff_VT_IS_POS} and Proposition \ref{prop:tree-like_recursion}.
\end{proof}

Note that for \eqref{eq:xypic_tree-like} we have
\begin{align*}
\kappa_{p(j),j} &= 0 ~ \mbox{for} ~ j \in \{C_8, C_{10}, C_{11}, C_{12}, C_{14}, C_{15}, C_{18}, C_{19}, C_{20}, C_{21}\} ~ \mbox{and} \\
\kappa_{p(j),j} &> 0 ~ \mbox{for} ~ j \in \{C_9, C_{13}, C_{16}, C_{17}, C_{22}\}.
\end{align*}
Thus, application of Corollary \ref{cor:tree-like_exists} yields for \eqref{eq:xypic_tree-like} that there exists $\kappa: \mc{R} \to \RPL$ such that $E_+^{\kappa} \neq \emptyset$ if and only if
\begin{align}
\begin{split}\label{eq:tree-like_example_exists}
&h_7 + \cdots + h_{22} < 0,     h_8 + \cdots + h_{11} < 0,  h_{10} < 0,  h_{11} < 0,                    h_{12} < 0,  h_{14} + \cdots + h_{22}, \\
&h_{15} + \cdots + h_{19} < 0,  h_{18} < 0,                 h_{19} < 0,  h_{20} + h_{21} + h_{22} < 0,  ~\mbox{and}~ h_{21} < 0.
\end{split}
\end{align}
Similarly, application of Corollary \ref{cor:tree-like_forall} yields for \eqref{eq:xypic_tree-like} that for all $\kappa: \mc{R} \to \RPL$ we have $E_+^{\kappa} \neq \emptyset$ if and only if
\begin{align*}
&\mbox{\eqref{eq:tree-like_example_exists} holds and moreover} \\
&h_9 + h_{10} \leq 0,     h_{13} + \cdots + h_{22} \leq 0,  h_{16} + \cdots + h_{19} \leq 0,  h_{17} \leq 0, h_{22} \leq 0.
\end{align*}

\section{The existence of rate coefficients such that the set of positive steady states is nonempty}\label{sec:general_exists}

In this section we generalise Corollary \ref{cor:tree-like_exists}. We will assume only that the reaction network under consideration is of deficiency-one and satisfies \eqref{eq:tree-like_1}. The main tool we use is the following purely graph theoretical theorem. We provide its proof in Appendix \ref{sec:app_positive_h-transshipment}. The graph theoretical notions and notations required to understand the statement of Theorem \ref{thm:exists_positive_h-transshipment} are summarized in Appendix \ref{sec:app_directed_graphs}.

\begin{theorem}\label{thm:exists_positive_h-transshipment}
Let $(V,A)$ be a weakly connected directed graph and let $h : V \to \R$ be a function with $h(V) = 0$. Then there exists a function $z : A \to \RPL$ such that $\excess_z = h$ if and only if
\begin{align}\label{eq:h(U)<0}
h(U) < 0 ~ \mbox{for all} ~ \emptyset \neq U \subsetneq V ~ \mbox{with} ~ \varrho^{\inarc}(U) = \emptyset.
\end{align}
\end{theorem}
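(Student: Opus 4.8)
The plan is to prove the two directions separately, with the forward (necessity) direction being a short computation and the reverse (sufficiency) direction carrying all the weight; the decisive idea for sufficiency will be to absorb the strictness into a uniform perturbation of an auxiliary \emph{nonnegative} transshipment.

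\emph{Necessity.} Suppose $z : A \to \RPL$ satisfies $\excess_z = h$, and let $\emptyset \neq U \subsetneq V$ with $\varrho^{\inarc}(U) = \emptyset$. By the additivity of the excess function \eqref{eq:excess_additive}, $h(U) = \excess_z(U) = z(\varrho^{\inarc}(U)) - z(\varrho^{\outarc}(U)) = -z(\varrho^{\outarc}(U))$. Since $U$ is a proper nonempty subset of a weakly connected graph and no arc enters $U$, at least one arc must leave $U$ (otherwise $U$ and $V \bs U$ would be separated, contradicting weak connectivity), i.e. $\varrho^{\outarc}(U) \neq \emptyset$; as $z$ is strictly positive on every arc, $z(\varrho^{\outarc}(U)) > 0$ and hence $h(U) < 0$.

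\emph{Sufficiency.} Assume \eqref{eq:h(U)<0}. I would first record the classical nonnegative feasibility criterion (the Gale--Hoffman transshipment theorem): for $g : V \to \R$ with $g(V) = 0$ there is a function $y : A \to \RNN$ with $\excess_y = g$ if and only if $g(U) \le 0$ for every $U \subseteq V$ with $\varrho^{\inarc}(U) = \emptyset$. In the appendix this can be derived from the max-flow--min-cut theorem applied to the network obtained by adjoining a super-source feeding the vertices with $g < 0$ and a super-sink draining those with $g > 0$, all original arcs given infinite capacity; the finite minimum cuts correspond exactly to the sets $U$ with $\varrho^{\inarc}(U) = \emptyset$. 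The key step is then to reduce the desired \emph{strictly positive} transshipment to this nonnegative one by writing $z = \eps \mathbf{1} + y$ for a suitable constant $\eps > 0$ and a nonnegative $y$, where $\mathbf{1}$ denotes the all-ones function on $A$. Since $\excess$ is linear in its argument, $\excess_z = \eps \cdot \excess_{\mathbf{1}} + \excess_y$, so it suffices to produce $y : A \to \RNN$ with $\excess_y = h - \eps\,\excess_{\mathbf{1}}$.

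The crucial computation is that the all-ones excess satisfies $\excess_{\mathbf{1}}(U) = |\varrho^{\inarc}(U)| - |\varrho^{\outarc}(U)|$ for every $U \subseteq V$ (count the arcs internal to $U$ against those crossing its boundary). In particular $\excess_{\mathbf{1}}(V) = 0$, so $g := h - \eps\,\excess_{\mathbf{1}}$ satisfies $g(V) = 0$; and for every $\emptyset \neq U \subsetneq V$ with $\varrho^{\inarc}(U) = \emptyset$ we get $g(U) = h(U) + \eps\,|\varrho^{\outarc}(U)|$, where $|\varrho^{\outarc}(U)| \ge 1$ by weak connectivity. Because \eqref{eq:h(U)<0} gives $h(U) < 0$ for each of the finitely many such $U$, I would set $\eps = \min\bigl(-h(U)/|\varrho^{\outarc}(U)|\bigr) > 0$, the minimum taken over the nonempty proper subsets $U$ with $\varrho^{\inarc}(U) = \emptyset$, so that $g(U) \le 0$ holds for all $U$ with $\varrho^{\inarc}(U) = \emptyset$ (the case $U = V$ being $g(V) = 0$). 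The nonnegative criterion then yields $y \ge 0$ with $\excess_y = g$, and $z = \eps\mathbf{1} + y$ is strictly positive on every arc and satisfies $\excess_z = \eps\,\excess_{\mathbf{1}} + g = h$, as required. The main obstacle is the nonnegative feasibility criterion itself: everything else is linear bookkeeping, but this theorem of the alternative is where the genuine combinatorial content (a max-flow--min-cut or Farkas-type separation) resides, and I expect it to occupy the bulk of the appendix. Once it is in hand, the perturbation converts the non-strict hypothesis $h(U) \le 0$ into the strict one $h(U) < 0$ precisely because subtracting the uniform flow $\eps\mathbf{1}$ from every arc lowers $h(U)$ by the positive amount $\eps\,|\varrho^{\outarc}(U)|$ on exactly the boundary-crossing sets that matter.
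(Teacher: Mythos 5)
Your proof is correct and rests on the same combinatorial engine as the paper's. The necessity argument is identical. For sufficiency, both arguments reduce to a corollary of Hoffman's circulation theorem with the identical critical choice $\eps = \min\bigl\{-h(U)/|\varrho^{\outarc}(U)|\bigr\}$ taken over the nonempty proper sets $U$ with $\varrho^{\inarc}(U)=\emptyset$; the only real difference is which corollary is invoked. The paper applies the two-sided bounded version (there is an $h$-transshipment with $d\le z\le c$ iff $c(\varrho^{\inarc}(U))-d(\varrho^{\outarc}(U))\ge h(U)$ for all $U\subseteq V$, Schrijver Cor.~11.2f) with constant bounds $d\equiv\eps$ and $c\equiv K$, and therefore must also exhibit an explicit upper bound $K$. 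You instead substitute $z=\eps\mathbf{1}+y$ and invoke the one-sided nonnegative criterion (Schrijver Cor.~11.2h) for the shifted demand $g=h-\eps\,\excess_{\mathbf{1}}$, which spares you the choice of $K$ but obliges you to prove or cite the nonnegative criterion separately --- the paper mentions Cor.~11.2h only as a point of comparison and derives everything from the single two-sided statement. The two routes are equivalent up to the linear change of variables $y=z-\eps\mathbf{1}$, so this is a cosmetic rather than substantive divergence. One small point to patch: when $(V,A)$ is strongly connected, the set of nonempty proper $U$ with $\varrho^{\inarc}(U)=\emptyset$ is empty and your $\eps$ is a minimum over the empty set; adjoin $1$ to the set being minimised (as the paper does in \eqref{eq:def_eps}), since in that degenerate case any positive $\eps$ works --- the only remaining constraints are $g(\emptyset)=g(V)=0$.
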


As a corollary of Theorem \ref{thm:exists_positive_h-transshipment}, we obtain the generalisation of Corollary \ref{cor:tree-like_exists}.

\begin{corollary}\label{cor:general_exists}
Let $(\mc{X},\mc{C},\mc{R})$ be a reaction network for which $\ell = t = 1$ and $\delta = 1$. Assume that $(\mc{C},\mc{R})$ is not strongly connected and let $h \in \R^c$ be as in \eqref{eq:fix_h}. Then there exists $\kappa: \mc{R} \to \RPL$ such that $E_+^{\kappa} \neq \emptyset$ if and only if
\begin{align}\label{eq:h(U)<0_CRN}
\mbox{$h(\wt{\mc{C}}) < 0$ for all $\emptyset \neq \wt{\mc{C}} \subsetneq \mc{C}$ such that $\varrho^{\inarc}(\wt{\mc{C}})=\emptyset$.}
\end{align}
\end{corollary}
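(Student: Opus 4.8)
The plan is to reduce Corollary \ref{cor:general_exists} to the purely graph-theoretical Theorem \ref{thm:exists_positive_h-transshipment} by identifying the reaction network data with the flow/excess setup of that theorem. First I would recall from \eqref{eq:EPL_NONEMPTY_iff_VT_IS_POS} that the existence of $\kappa$ with $E^{\kappa}_+\neq\emptyset$ is equivalent to the existence of $\kappa:\mc{R}\to\RPL$ for which the vector $\vt''=(I_{\kappa}'')^{-1}h''$ lies in $\RPL^{c''}$, i.e. all coordinates of $\vt''$ are positive. The connection to Lemma \ref{lemma:excessIkappa} is the key bridge: given $\kappa$ and the associated solution $\vt$ of $I_{\kappa}\vt=h$, one defines $z_{ij}=\kappa_{ij}\vt_i$ on the arcs, and the lemma yields $\excess_z(U)=\sum_{j\in U}h_j=h(U)$ for all $U\subseteq\mc{C}$. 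The decisive observation is that $z$ is a \emph{positive} function on the arcs exactly when $\kappa_{ij}>0$ on $\mc{R}$ and $\vt_i>0$ for every tail $i$ of an arc.

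Next I would set up the correspondence in both directions on the graph $\mc{D}=(\mc{C},\mc{R})$, which is weakly connected by $\ell=1$. For the forward direction, suppose $\kappa$ exists with $\vt''\in\RPL^{c''}$. Since the single terminal strong component $\mc{C}'$ carries the steady state, one expects $\vt$ to be positive on all of $\mc{C}$ (the coordinates on $\mc{C}'$ being positive follows from the Deficiency-One machinery, or can be arranged by the choice of $\vt$ on the recurrent part). Then $z_{ij}=\kappa_{ij}\vt_i>0$ for all $(i,j)\in\mc{R}$, so $z:\mc{R}\to\RPL$ is an admissible positive function with $\excess_z=h$ and $h(\mc{C})=0$ (the latter because $h\in\ker B\cap\ran I_{\kappa}$ forces $\sum_j h_j=0$, since every column of $I_{\kappa}$ sums to zero). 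Theorem \ref{thm:exists_positive_h-transshipment} then delivers condition \eqref{eq:h(U)<0_CRN}. For the converse, assuming \eqref{eq:h(U)<0_CRN} and $h(\mc{C})=0$, Theorem \ref{thm:exists_positive_h-transshipment} produces a positive $z:\mc{R}\to\RPL$ with $\excess_z=h$; from such a $z$ I would reconstruct $\kappa$ and $\vt$ by choosing positive $\vt$ freely and setting $\kappa_{ij}=z_{ij}/\vt_i>0$, guaranteeing $\vt''\in\RPL^{c''}$ and hence $E^{\kappa}_+\neq\emptyset$.

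The main obstacle I anticipate is handling the two directions asymmetrically because of the roles of $\mc{C}'$ versus $\mc{C}''$ and the bookkeeping of $h(\mc{C})=0$. In the forward direction one must be careful that positivity of $\vt''$ alone does not automatically give positivity of $\vt'$; one needs to invoke that on the recurrent class $\mc{C}'$ the restricted system is weakly reversible deficiency-zero-like, so that a genuinely positive $\vt'$ can be taken, making $z$ positive on \emph{all} arcs and not merely the arcs leaving $\mc{C}''$. In the converse direction the subtlety is the reverse: given a positive $z$ on all of $\mc{R}$, the freedom in choosing $\vt$ positive and recovering $\kappa$ must be organized so that the resulting $\vt$ satisfies $I_{\kappa}\vt=h$ by construction (this is exactly the content of Lemma \ref{lemma:excessIkappa} read backwards), whence $\vt''=(I_{\kappa}'')^{-1}h''$ is this same positive $\vt''$. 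Verifying that $\varrho^{\inarc}(\wt{\mc{C}})=\emptyset$ in the graph sense coincides with the reaction-network phrasing (no reaction enters $\wt{\mc{C}}$) is routine, as is the identification $h(\wt{\mc{C}})=\sum_{i\in\wt{\mc{C}}}h_i$ via \eqref{eq:p(X_0)=sump(x)}; the essential work is in reconciling the positivity of the flow on the whole weakly connected graph with the positivity requirement on $\vt''$ coming from Theorem \ref{thm:dfc1,t=l=1}(b).
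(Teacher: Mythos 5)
Your proposal is correct in substance, and your sufficiency argument is exactly the paper's: take $\kappa$ itself to be a positive $h$-transshipment supplied by Theorem \ref{thm:exists_positive_h-transshipment}, set $\vt$ to be a constant positive vector, verify $I_{\kappa}\vt = h$ coordinatewise, and conclude via Theorem \ref{thm:dfc1,t=l=1} $(b)$. Where you diverge is the necessity direction. The paper does \emph{not} invoke Theorem \ref{thm:exists_positive_h-transshipment} there at all: it observes that any $\emptyset \neq \wt{\mc{C}} \subsetneq \mc{C}$ with $\varrho^{\inarc}(\wt{\mc{C}}) = \emptyset$ must satisfy $\wt{\mc{C}} \subseteq \mc{C}''$ (otherwise some arc would enter it, since $\mc{C}'$ is strongly connected and every complex has a directed path to $\mc{C}'$), so that $h(\wt{\mc{C}}) = \excess_z(\wt{\mc{C}}) = -z(\varrho^{\outarc}(\wt{\mc{C}}))$ by Lemma \ref{lemma:excessIkappa}, and every arc in $\varrho^{\outarc}(\wt{\mc{C}})$ has its tail in $\mc{C}''$, where $\vt'' = (I_{\kappa}'')^{-1}h'' \in \RPL^{c''}$ already guarantees $z_{ij} = \kappa_{ij}\vt_i > 0$; positivity of $\vt'$ is never needed. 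Your route instead requires a transshipment that is positive on \emph{all} arcs, hence $\vt' \in \RPL^{c'}$, and this is not automatic from Theorem \ref{thm:dfc1,t=l=1} $(b)$ -- but it can be arranged as you suggest, because $\ker I_{\kappa}$ is one-dimensional, vanishes on $\mc{C}''$ (as $I_{\kappa}''$ is invertible), and is strictly positive on $\mc{C}'$ (it is the kernel of the Kirchhoff matrix of the strongly connected graph on $\mc{C}'$, e.g.\ by the Matrix-Tree Theorem), so adding a large positive multiple of the kernel vector fixes $\vt'$ without disturbing $\vt''$. With that extra lemma your argument closes; the paper's version simply sidesteps it. The remaining points -- $h(\mc{C}) = 0$ because the columns of $I_{\kappa}$ sum to zero, and the reconstruction $\kappa_{ij} = z_{ij}/\vt_i$ in the converse -- are sound as you state them.
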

\begin{proof}
To prove that \eqref{eq:h(U)<0_CRN} is necessary, let $\kappa: \mc{R} \to \RPL$ be such that $E_+^{\kappa} \neq \emptyset$. Then, by Theorem \ref{thm:dfc1,t=l=1} $(b)$, we have $(I_{\kappa}'')^{-1}h'' \in \RPL^{c''}$. Let $\vt \in \R^c$ be such that $I_{\kappa} \vt = h$ and define $z : \mc{R} \to \R$ by $z_{ij} = \kappa_{ij} \vt_i$. Since $\vt'' = (I_{\kappa}'')^{-1}h''$, we have $\vt''_i > 0$ for all $i \in \mc{C}''$. Also, let $\emptyset \neq \wt{\mc{C}} \subsetneq \mc{C}$ be such that $\varrho^{\inarc}(\wt{\mc{C}})=\emptyset$. Then clearly $\wt{\mc{C}} \subseteq \mc{C}''$ and $\varrho^{\outarc}(\wt{\mc{C}}) \neq \emptyset$. Thus, by Lemma \ref{lemma:excessIkappa}, we have
\begin{align*}
h(\wt{\mc{C}}) = \excess_z (\wt{\mc{C}}) = - z(\varrho^{\outarc}(\wt{\mc{C}})) = - \sum_{(i,j) \in \varrho^{\outarc}(\wt{\mc{C}})} \kappa_{ij} \vt_i < 0.
\end{align*}

To prove the sufficiency of \eqref{eq:h(U)<0_CRN}, let $\kappa : \mc{R} \to \RPL$ be such that $\excess_{\kappa} = h$ (see Theorem \ref{thm:exists_positive_h-transshipment}). Also, define $\vt \in \R^c$ by $\vt_i = 1$ $(i \in \mc{C})$. Then clearly $I_{\kappa} \vt = h$ (recall \eqref{eq:Ikappa}). Thus, $(I_{\kappa}'')^{-1}h'' = \vt'' \in \RPL^{c''}$ and therefore Theorem \ref{thm:dfc1,t=l=1} $(b)$ concludes the proof.
\end{proof}

Consider that the graph of complexes takes the form
\begin{align}\label{eq:xypic_exists}
\xymatrix{
                   & C_6 \ar@<.5ex>[r] & C_5 \ar@<.5ex>[l] \ar[r] & C_1 \ar[r] & C_4 \ar[d] \\
C_7 \ar[ru] \ar[r] & C_8 \ar[r]        & C_9 \ar[ru]              & C_2 \ar[u] & C_3. \ar[l] \\
}
\end{align}
Application of Corollary \ref{cor:general_exists} to a reaction network for which the graph of complexes is \eqref{eq:xypic_exists} yields that there exists $\kappa: \mc{R} \to \RPL$ such that $E_+^{\kappa} \neq \emptyset$ if and only if
\begin{align*}
&h_5 + h_6 + h_7 < 0, h_7 < 0, h_7 + h_8 < 0, h_7 + h_8 + h_9 < 0,\\
&h_5 + h_6 + h_7 + h_8 < 0, ~ \mbox{and} ~ h_5 + h_6 + h_7 + h_8 + h_9 <0.
\end{align*}

By Corollary \ref{cor:general_exists}, the sets of interest are $\emptyset \neq \wt{\mc{C}} \subsetneq \mc{C}$ for which $\varrho^{\inarc}(\wt{\mc{C}})=\emptyset$. Therefore, we conclude this section by some comments on these sets (see Proposition \ref{prop:R(i)} and Corollary \ref{cor:wt_mc_C} below). To this end, let us define for $i \in \mc{C}''$ the set $R(i) \subseteq \mc{C}''$ by
\begin{align}\label{eq:R(i)}
R(i) = \{j \in \mc{C}'' ~|~ \mbox{there exists a directed path from $j$ to $i$}\}.
\end{align}

\begin{proposition}\label{prop:R(i)}
Assume that $(\mc{C}, \mc{R})$ satisfies \eqref{eq:tree-like_1} and for $i \in \mc{C}''$ let $R(i)$ be as in \eqref{eq:R(i)}. Then
\begin{itemize}
  \item[(a)] for all $i \in \mc{C}''$ we have $i \in R(i)$,

  \item[(b)] for all $i \in \mc{C}''$ the set $R(i)$ is the disjoint union of some strong linkage classes,

  \item[(c)] if $i_1 \in \mc{C}''$ and $i_2 \in \mc{C}''$ are in the same strong linkage class then $R(i_1) = R(i_2)$,

  \item[(d)] for all $i \in \mc{C}''$ we have $\emptyset \neq R(i) \subsetneq \mc{C}$ and $\varrho^{\inarc}(R(i))=\emptyset$,

  \item[(e)] if $\emptyset \neq \wt{\mc{C}} \subsetneq \mc{C}$ is such that $\varrho^{\inarc}(\wt{\mc{C}})=\emptyset$ and $i \in \wt{\mc{C}}$ then $R(i) \subseteq \wt{\mc{C}}$, and

  \item[(f)] for all $i_1, i_2 \in \mc{C}''$ we have $\emptyset \neq R(i_1) \cup R(i_2) \subsetneq \mc{C}$ and $\varrho^{\inarc}(R(i_1) \cup R(i_2))=\emptyset$.
\end{itemize}
\end{proposition}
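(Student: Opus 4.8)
The plan is to run every part through a single observation: for a vertex set $U$, the condition $\varrho^{\inarc}(U)=\emptyset$ is equivalent to $U$ being \emph{closed under predecessors}, i.e.\ whenever $b\in U$ and $(a,b)\in\mc{R}$ one has $a\in U$, or equivalently every directed path whose final vertex lies in $U$ lies entirely in $U$. Before starting I would record the two structural facts forced by $\ell=t=1$: the terminal strong linkage class $\mc{C}'$ is the unique absorbing strong component, so $\varrho^{\outarc}(\mc{C}')=\emptyset$, and consequently every vertex of $(\mc{C},\mc{R})$ has a directed path into $\mc{C}'$ (its strong component reaches the unique sink of the condensation).

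Part (a) is immediate from the convention that the length-zero path from $i$ to itself exists, giving $i\in R(i)$. For (b) and (c) the tool is concatenation: if there is a directed path from $j$ to $i$ and $j'$ lies in the same strong linkage class as $j$, then $\con$ of a $j'\to j$ path with the $j\to i$ path witnesses $j'\in R(i)$; since $j\in R(i)\subseteq\mc{C}''$ forces the whole strong linkage class of $j$ into $\mc{C}''$ (that class is not $\mc{C}'$), membership in $R(i)$ depends only on the strong linkage class. This shows $R(i)$ is a disjoint union of strong linkage classes, which is (b), and the same concatenation applied symmetrically to $i_1,i_2$ in a common class gives $R(i_1)=R(i_2)$, which is (c).

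The heart of the argument is the identification $R(i)=\{\,j\in\mc{C}\mid\mbox{there is a directed path from $j$ to $i$}\,\}$, i.e.\ the restriction $j\in\mc{C}''$ in \eqref{eq:R(i)} is automatic: a predecessor of $i\in\mc{C}''$ cannot lie in $\mc{C}'$, since $\varrho^{\outarc}(\mc{C}')=\emptyset$ forbids leaving $\mc{C}'$. Hence $R(i)$ is exactly the ancestor set of $i$, which is predecessor-closed and so has $\varrho^{\inarc}(R(i))=\emptyset$; combined with $i\in R(i)$ and $R(i)\subseteq\mc{C}''\subsetneq\mc{C}$ (using $\mc{C}'\neq\emptyset$) this is (d). For (e), a predecessor-closed $\wt{\mc{C}}$ containing $i$ must contain all ancestors of $i$, so $R(i)\subseteq\wt{\mc{C}}$; the point needing care is that $i\in\mc{C}''$ (so that $R(i)$ is defined), which I would secure by first proving $\wt{\mc{C}}\subseteq\mc{C}''$. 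If instead $\wt{\mc{C}}$ met $\mc{C}'$, then predecessor-closedness together with strong connectivity of $\mc{C}'$ would force $\mc{C}'\subseteq\wt{\mc{C}}$, and then any vertex outside $\wt{\mc{C}}$ (one exists since $\wt{\mc{C}}\subsetneq\mc{C}$) has a path into $\mc{C}'\subseteq\wt{\mc{C}}$, producing an arc entering $\wt{\mc{C}}$ and contradicting $\varrho^{\inarc}(\wt{\mc{C}})=\emptyset$.

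Finally, (f) follows from (d): nonemptiness and properness transfer verbatim, and predecessor-closedness is preserved under unions, since an arc entering $R(i_1)\cup R(i_2)$ would have its head in one of the sets, say $R(i_1)$, and its tail outside the union hence outside $R(i_1)$, contradicting $\varrho^{\inarc}(R(i_1))=\emptyset$. I expect the only genuine obstacle to be the inclusion $\wt{\mc{C}}\subseteq\mc{C}''$ needed in (e), as it is the sole place where $\varrho^{\inarc}(\wt{\mc{C}})=\emptyset$ must be combined with both the strong connectivity of $\mc{C}'$ and the reachability of $\mc{C}'$ from every vertex; once the predecessor-closed viewpoint is fixed, the rest is formal.
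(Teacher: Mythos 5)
Your proof is correct, and it fills in exactly the routine verifications the paper skips: the paper's entire proof of Proposition \ref{prop:R(i)} is the single line ``All the statements are trivial.'' Your predecessor-closed reformulation of $\varrho^{\inarc}(U)=\emptyset$, the observation that the restriction $j\in\mc{C}''$ in \eqref{eq:R(i)} is automatic because $\varrho^{\outarc}(\mc{C}')=\emptyset$, and the reachability argument establishing $\wt{\mc{C}}\subseteq\mc{C}''$ in part $(e)$ are all sound and are the intended (if unstated) justifications.
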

\begin{proof}
All the statements are trivial.
\end{proof}

\begin{corollary}\label{cor:wt_mc_C}
Assume that $(\mc{C}, \mc{R})$ satisfies \eqref{eq:tree-like_1} and for $i \in \mc{C}''$ let $R(i)$ be as in \eqref{eq:R(i)}. Let $\emptyset \neq \wt{\mc{C}} \subsetneq \mc{C}$ be such that $\varrho^{\inarc}(\wt{\mc{C}})=\emptyset$. Then there exists $J \subseteq \mc{C}''$ such that $\wt{\mc{C}} = \cup_{i \in J} R(i)$.
\end{corollary}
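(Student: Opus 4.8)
The plan is to show that the prescribed $\wt{\mc{C}}$ is exactly the union of the sets $R(i)$ taken over all $i$ belonging to $\wt{\mc{C}}$ itself. First I would observe that since $\varrho^{\inarc}(\wt{\mc{C}}) = \emptyset$, no vertex of $\mc{C}'$ can lie in $\wt{\mc{C}}$: every complex in the terminal strong linkage class $\mc{C}'$ has an incoming arc from within $\mc{C}''$ (because the network satisfies $\ell = t = 1$ and is connected, so there is an arc from $\mc{C}''$ into $\mc{C}'$), and more directly $\mc{C}'$ is strongly connected and receives arcs, so no initial cut contained in $\mc{C}$ can include a vertex of $\mc{C}'$ without also including its in-neighbours. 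Hence $\wt{\mc{C}} \subseteq \mc{C}''$, and I may set $J = \wt{\mc{C}}$, which is a nonempty subset of $\mc{C}''$.

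The two inclusions are then immediate from Proposition \ref{prop:R(i)}. For the inclusion $\cup_{i \in J} R(i) \subseteq \wt{\mc{C}}$, I apply part (e): for each $i \in J = \wt{\mc{C}}$ we have $i \in \wt{\mc{C}}$ with $\varrho^{\inarc}(\wt{\mc{C}}) = \emptyset$, so part (e) gives $R(i) \subseteq \wt{\mc{C}}$; taking the union over $i \in J$ preserves this. For the reverse inclusion $\wt{\mc{C}} \subseteq \cup_{i \in J} R(i)$, I use part (a): every $i \in \wt{\mc{C}} = J$ satisfies $i \in R(i)$, so $i$ lies in the union. Combining the two inclusions yields $\wt{\mc{C}} = \cup_{i \in J} R(i)$ with $J = \wt{\mc{C}} \subseteq \mc{C}''$.

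The only genuine step requiring care is the claim that $\wt{\mc{C}} \subseteq \mc{C}''$; once that is established the rest is a formal consequence of parts (a) and (e) of the preceding proposition. I expect this containment to be the main obstacle, though it is not deep: it rests on the structural fact that $\mc{C}'$ is the unique absorbing strong component (since $t = 1$), so any vertex of $\mc{C}'$ has all its in-neighbours reachable to it, forcing a nonempty in-cut for any set meeting $\mc{C}'$ but not equal to all of $\mc{C}$. Since the statement of Proposition \ref{prop:R(i)} already asserts all its parts are trivial, I would keep the write-up brief, stating the containment $\wt{\mc{C}} \subseteq \mc{C}''$ and then invoking parts (a) and (e) directly.
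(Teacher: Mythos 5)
Your proposal is correct and matches the paper's proof, which likewise takes $J=\wt{\mc{C}}$ and invokes Proposition \ref{prop:R(i)} $(a)$ and $(e)$ for the two inclusions. The containment $\wt{\mc{C}}\subseteq\mc{C}''$ that you single out is left implicit in the paper (it is noted explicitly in the proof of Corollary \ref{cor:general_exists}), and your justification of it is sound.
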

\begin{proof}
The statement follows directly from Proposition \ref{prop:R(i)} $(a)$ and $(e)$.
\end{proof}

For \eqref{eq:xypic_exists} we have
\begin{align*}
R(5) = R(6) = \{5, 6, 7\}, R(7) = \{7\}, R(8) = \{7, 8\}, ~ \mbox{and} ~ R(9) = \{7, 8, 9\}.
\end{align*}
It can be seen that the sets $\emptyset \neq \wt{\mc{C}} \subsetneq \mc{C}$ for which $\varrho^{\inarc}(\wt{\mc{C}})=\emptyset$ holds for \eqref{eq:xypic_exists} are exactly the sets
\begin{align*}
R(5), R(7), R(8), R(9), R(5) \cup R(8), ~ \mbox{and} ~ R(5) \cup R(9).
\end{align*}


\section{The non-emptiness of the set of positive steady states regardless of the values of the rate coefficients}\label{sec:general_forall}

We generalised Corollary \ref{cor:tree-like_exists} in Section \ref{sec:general_exists}. Our aim in the present section is to generalise Corollary \ref{cor:tree-like_forall}. Namely, to provide an equivalent condition to the statement \lq\lq for all $\kappa:\mc{R}\to\RPL$ we have $E^{\kappa}_+\neq\emptyset$'' for deficiency-one reaction networks for which the graph of complexes satisfies \eqref{eq:tree-like_1}. By Theorem \ref{thm:dfc1,t=l=1} $(b)$, the important object is $(I_{\kappa}'')^{-1}h''$, thus it does not restrict the generality if we contract the absorbing strong component of $(\mc{C}, \mc{R})$ into one vertex. So assume throughout this section that
\begin{align*}
\mc{C}' ~ \mbox{is a singleton.}
\end{align*}
Also, in order to ease the notation, we identify the set $\mc{C}'$ with the sole element in that set. (Though it is straightforward to extend all the definitions, proofs, and results of this section to the case $|\mc{C}'| \geq 2$, we still suppose $|\mc{C}'| = 1$ in order to avoid unnecessary technical complications.)

The rest of this section is organised as follows. After providing a formula for the entries of $(I_{\kappa}'')^{-1}$ (see Theorem \ref{thm:inverse_Ikappa''}), we prove the generalisation of Corollary \ref{cor:tree-like_forall} (see Corollary \ref{cor:forallkappa_hU<=0}). As it will be demonstrated on \eqref{eq:xypic_U}, the obtained result contains certain redundancies. We will get rid of these redundancies in three steps (see Corollaries \ref{cor:forallkappa_hU(i)<=0}, \ref{cor:mcJ}, \ref{cor:equiv_without_redundancy}). After some further manipulation, we arrive to Theorem \ref{thm:main}, which is the main result of this paper. Finally, we provide some additional results related to the condition that appears in Theorem \ref{thm:main} (see Propositions \ref{prop:laminar_mcU} and \ref{prop:equiv_Wj_subset_C''j}).

We first provide a formula for the entries of the inverse of $I_{\kappa}''$ via the Matrix-Tree Theorem. This formula in some other context and with slightly different notations also appears in \cite[Appendix]{leighton:rivest:1983}. Please refer to Appendix \ref{sec:app_mtx-tree-thm} for more on the Matrix-Tree Theorem.

\begin{theorem}\label{thm:inverse_Ikappa''}
Assume that $(\mc{C}, \mc{R})$ satisfies \eqref{eq:tree-like_1}. Fix $i, j \in \mc{C}''$. Then
\begin{align}\label{eq:Ikappainverse}
((I_{\kappa}'')^{-1})_{ji} = -\frac{\sum_{\wt{\mc{R}} \in \mc{T}_{\mc{D}}^{ij}(\mc{C}' \cup \{j\})}\kappa_{\wt{\mc{R}}}}
                                           {\sum_{\wt{\mc{R}} \in \mc{T}_{\mc{D}}(\mc{C}')}\kappa_{\wt{\mc{R}}}},
\end{align}
where the summation in the enumerator goes for those $(\mc{C}' \cup \{j\})$-branchings $\wt{\mc{R}}$ in $\mc{D} = (\mc{C}, \mc{R})$ for which there exists a directed path from $i$ to $j$ in $(\mc{C}, \wt{\mc{R}})$, while the summation in the denominator goes for the $\mc{C}'$-branchings $\wt{\mc{R}}$ in $\mc{D} = (\mc{C}, \mc{R})$ (see Appendix \ref{sec:app_directed_graphs} for the definitions of these standard graph theoretical terms). The symbol $\kappa_{\wt{\mc{R}}}$ is a shorthand notation for the product $\prod_{a \in \wt{\mc{R}}} \kappa_a$.
\end{theorem}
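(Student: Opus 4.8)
The plan is to prove the formula \eqref{eq:Ikappainverse} for $((I_{\kappa}'')^{-1})_{ji}$ by expressing each entry of the inverse via Cramer's rule and then interpreting the resulting determinants combinatorially through the Matrix-Tree Theorem. Recall that $((I_{\kappa}'')^{-1})_{ji} = (-1)^{i+j} \det(M_{ij}) / \det(I_{\kappa}'')$, where $M_{ij}$ is the minor of $I_{\kappa}''$ obtained by deleting row $i$ and column $j$ (the indices swap because the inverse transposes cofactors). So the task splits naturally into identifying the denominator $\det(I_{\kappa}'')$ and the numerator cofactor with the two weighted branching sums appearing in \eqref{eq:Ikappainverse}.

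**First I would** handle the denominator. The matrix $I_\kappa$ is (minus) a Laplacian-type matrix of the weighted digraph $\mc{D}$, and $I_{\kappa}''$ is obtained by deleting the rows and columns indexed by $\mc{C}'$ (here a single vertex, since we have reduced to $\mc{C}'$ a singleton). The Matrix-Tree Theorem in its directed, all-minors form (Appendix \ref{sec:app_mtx-tree-thm}) states precisely that the determinant of such a principal submatrix equals, up to a controlled sign, the sum $\sum_{\wt{\mc{R}} \in \mc{T}_{\mc{D}}(\mc{C}')} \kappa_{\wt{\mc{R}}}$ over all $\mc{C}'$-branchings, i.e.\ spanning forests in which every vertex of $\mc{C}''$ has a directed path into $\mc{C}'$. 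I would apply this to get $\det(I_{\kappa}'') = (-1)^{c''} \sum_{\wt{\mc{R}} \in \mc{T}_{\mc{D}}(\mc{C}')} \kappa_{\wt{\mc{R}}}$, the sign coming from the $c''$ diagonal entries that carry the $-\sum_i \kappa_{\cdot i}$ terms.

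**The more delicate part** is the numerator cofactor. I would apply the \emph{all-minors} Matrix-Tree Theorem again, this time to the minor $\det(M_{ij})$: deleting an extra row $i$ and column $j$ from $I_{\kappa}''$ corresponds combinatorially to summing $\kappa_{\wt{\mc{R}}}$ over branchings rooted at $\mc{C}' \cup \{j\}$ subject to the additional constraint that vertex $i$ lies in the subtree hanging off $j$ (equivalently, there is a directed path from $i$ to $j$ in the branching). This is exactly the set $\mc{T}_{\mc{D}}^{ij}(\mc{C}' \cup \{j\})$. The precise bookkeeping of which row/column deletions produce which root-set and path constraint, together with tracking the signs $(-1)^{i+j}$ against the signs produced by the Matrix-Tree Theorem and against the overall $(-1)^{c''}$ from the denominator, is where I expect the main obstacle to lie; the net outcome must collapse to the single overall minus sign visible in \eqref{eq:Ikappainverse}.

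**To close**, I would combine the two evaluations: the $(-1)^{c''}$ factors from numerator and denominator cancel, the cofactor sign $(-1)^{i+j}$ is absorbed into the signed all-minors statement, and one residual minus sign survives, yielding
\begin{align*}
((I_{\kappa}'')^{-1})_{ji} = -\frac{\sum_{\wt{\mc{R}} \in \mc{T}_{\mc{D}}^{ij}(\mc{C}' \cup \{j\})} \kappa_{\wt{\mc{R}}}}{\sum_{\wt{\mc{R}} \in \mc{T}_{\mc{D}}(\mc{C}')} \kappa_{\wt{\mc{R}}}}.
\end{align*}
An alternative, perhaps cleaner, route avoids explicit sign-chasing: I would verify the formula by checking that the candidate right-hand side, viewed as a matrix $N$ with $N_{ji}$ given by the branching ratio, satisfies $I_{\kappa}'' N = \Id$ directly. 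Fixing a column index $i$ and computing $(I_{\kappa}'' N)_{ki}$ as a sum over branchings, one uses a deletion-contraction or sign-reversing involution on the edges incident to $k$ to show all off-diagonal entries vanish and the diagonal entry is $1$; this sidesteps Cramer's rule entirely and is likely the more robust way to pin down the sign. I expect the involutive cancellation argument to be the main technical step in this second approach.
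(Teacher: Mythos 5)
Your proposal is correct and follows essentially the same route as the paper: the cofactor (Cramer) formula for $((I_{\kappa}'')^{-1})_{ji}$ combined with two applications of the all-minors Matrix--Tree Theorem, one for $\det I_{\kappa}''$ (giving the $\mc{C}'$-branchings) and one for the extra minor (giving the $(\mc{C}'\cup\{j\})$-branchings with a directed path from $i$ to $j$), with the signs $(-1)^{i+j}$, $(-1)^{c''}$, and $(-1)^{c''-1}$ collapsing to the single minus sign. The only bookkeeping detail you glossed over is that the paper's Theorem \ref{thm:matrixtree} assumes vanishing \emph{row} sums, whereas $I_{\kappa}$ has vanishing column sums, so the theorem is applied to $I_{\kappa}^{\top}$.
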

\begin{proof}
Application of Theorem \ref{thm:matrixtree} to the transpose of $I_{\kappa}$ twice yield
\begin{align*}
((I_{\kappa}'')^{-1})_{ji} = ((I_{\kappa}''^{\top})^{-1})_{ij}
                                    &= \frac{(-1)^{i+j} d_{ji}(I_{\kappa}''^{\top})}{\det I_{\kappa}''^{\top}} =
                                       \frac{(-1)^{i+j} d_{\mc{C}' \cup \{j\}, \mc{C}' \cup \{i\}}(I_{\kappa}^{\top})}{d_{\mc{C}', \mc{C}'}(I_{\kappa}^{\top})}= \\
                                    &= \frac{(-1)^{c-|\mc{C}'|-1}\sum_{\wt{\mc{R}} \in \mc{T}_{\mc{D}}^{ij}(\mc{C}' \cup \{j\})}\kappa_{\wt{\mc{R}}}}
                                            {(-1)^{c-|\mc{C}'|}\sum_{\wt{\mc{R}} \in \mc{T}_{\mc{D}}(\mc{C}')}\kappa_{\wt{\mc{R}}}},
\end{align*}
where $d_{Q_1, Q_2}(Z)$ denotes the determinant of that matrix, which is obtained from $Z$ by deleting the rows with index in $Q_1$ and the columns with index in $Q_2$.
\end{proof}

See Figure \ref{fig:branching_mtxtree} for an illustration of the notions appearing in Theorem \ref{thm:inverse_Ikappa''}.
\begin{figure}[h!t!b!]
  \begin{align*}
  \xymatrix{
  C_4 \ar@<.5ex>[d]
      \ar@/^1pc/[dd] & & C_4 \ar[d]
                         ^{\kappa_{43}} & C_4 \ar@/^1pc/[dd]
                                          ^{\kappa_{42}}     & C_4 \ar@/^1pc/[dd]
                                                               ^{\kappa_{42}}     & & C_4            & C_4            & C_4            & C_4                & C_4                   \\
  C_3 \ar@<.5ex>[d]
      \ar@<.5ex>[u]  & & C_3 \ar[d]
                         ^{\kappa_{32}} & C_3 \ar[d]
                                          _{\kappa_{32}}     & C_3 \ar[u]
                                                               ^{\kappa_{34}}     & & C_3 \ar[d]
                                                                                      ^{\kappa_{32}} & C_3 \ar[u]
                                                                                                       ^{\kappa_{34}} & C_3 \ar[u]
                                                                                                                        ^{\kappa_{34}} & C_3 \ar[d]
                                                                                                                                         ^{\kappa_{32}}     & C_3 \ar[u]
                                                                                                                                                              _{\kappa_{34}}         \\
  C_2 \ar[d]
      \ar@<.5ex>[u]
      \ar@/^1pc/[uu] & & C_2 \ar[d]
                         ^{\kappa_{21}} & C_2 \ar[d]
                                          ^{\kappa_{21}}     & C_2 \ar[d]
                                                               ^{\kappa_{21}}     & & C_2 \ar[d]
                                                                                      ^{\kappa_{21}} & C_2 \ar[d]
                                                                                                       ^{\kappa_{21}} & C_2 \ar[u]
                                                                                                                        ^{\kappa_{23}} & C_2 \ar@/^1pc/[uu]
                                                                                                                                         ^{\kappa_{24}} & C_2 \ar@/^1pc/[uu]
                                                                                                                                                          ^{\kappa_{24}}             \\
  C_1                & & C_1        & C_1                & C_1                & & C_1        & C_1        & C_1        & C_1                & C_1                \\
  }
  \end{align*}
  \caption{An example of a graph of complexes $\mc{D} = (\mc{C}, \mc{R})$ with $\mc{C} = \{C_1, C_2, C_3, C_4\}$ and $\mc{C}' = \{C_1\}$ (on the left), the three $1$-branchings in $\mc{D}$ (in the middle), and the five $\{1, 4\}$-branchings in $\mc{D}$ (on the right). Note that $|\mc{T}^{24}_{\mc{D}}(\{1,4\})| = 3$, $|\mc{T}^{34}_{\mc{D}}(\{1,4\})| = 4$, and $|\mc{T}^{44}_{\mc{D}}(\{1,4\})| = 5$. Thus, e.g. for $i=2$ and $j=4$ the numerator in \eqref{eq:Ikappainverse} is the sum of $3$ products and each of these products has $2$ factors, namely, $\kappa_{23}\kappa_{34} + \kappa_{24}\kappa_{32} + \kappa_{24}\kappa_{34}$.}
  \label{fig:branching_mtxtree}
\end{figure}
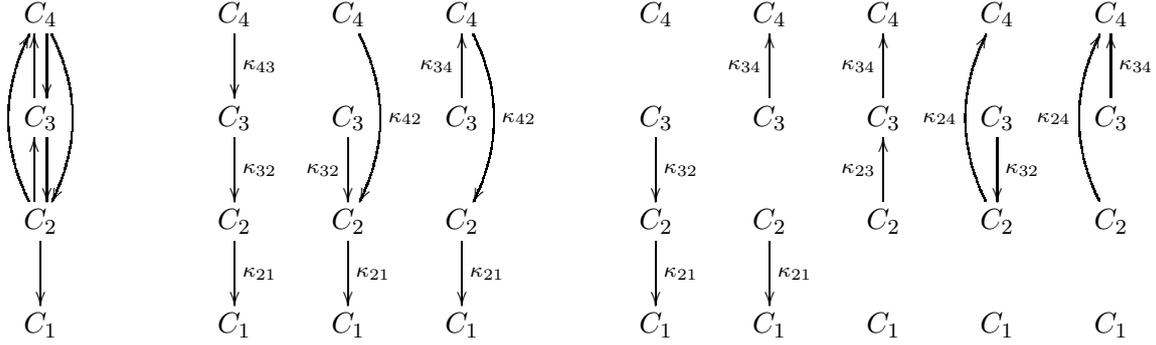

Let us introduce the notation $L_{\mc{D}}(\kappa) = \sum_{\wt{\mc{R}} \in \mc{T}_{\mc{D}}(\mc{C}')}\kappa_{\wt{\mc{R}}}$. Thus, e.g. for the example in Figure \ref{fig:branching_mtxtree} we have
\begin{align*}
L_{\mc{D}}(\kappa) = \kappa_{21}\kappa_{32}\kappa_{43} + \kappa_{21}\kappa_{32}\kappa_{42} + \kappa_{21}\kappa_{34}\kappa_{42}.
\end{align*}
From this point on, let $h$ and $\vt$ be as in \eqref{eq:fix_h} and \eqref{eq:fix_vt}, respectively. As a consequence of Theorem \ref{thm:inverse_Ikappa''}, for $j\in \mc{C}''$ we have
\begin{align*}
\vt_j = \left[(I_{\kappa}'')^{-1}h''\right]_j &= -\frac{1}{L_{\mc{D}}(\kappa)}\sum_{i\in \mc{C}''}h_i\left(\sum_{\wt{\mc{R}}\in\mc{T}^{ij}_{\mc{D}}(\mc{C}' \cup \{j\})}\kappa_{\wt{\mc{R}}}\right)=\\
&= -\frac{1}{L_{\mc{D}}(\kappa)} \sum_{\wt{\mc{R}} \in \mc{T}_{\mc{D}}(\mc{C}' \cup \{j\})}\left(\kappa_{\wt{\mc{R}}} \cdot h(V[\wt{\mc{R}},j])\right),
\end{align*}
where $V[\wt{\mc{R}},j]$ denotes the vertex set of the $j$-arborescence of the $(\mc{C}' \cup \{j\})$-branching $\wt{\mc{R}}$ (see Appendix \ref{sec:app_directed_graphs}). Note that for a vertex $j \in \mc{C}''$ and a set $U \subseteq \mc{C}''$ there exists a $(\mc{C}' \cup \{j\})$-branching $\wt{\mc{R}}$ such that $V[\wt{\mc{R}},j] = U$ if and only if
\begin{align}
\label{eq:j-inarb_1}&\mbox{for all $i'\in U$ there exists $P \in \orarr{i',j}$ such that $V[P] \subseteq U$ and}\\
\label{eq:j-inarb_2}&\mbox{for all $i'\in \mc{C}''\bs U$ there exists $P \in \orarr{i',\mc{C}'}$ such that $V[P] \subseteq \mc{C} \bs U$.}
\end{align}
Let us define the set $\mc{C}_{j-\inarb}''$ by
\begin{align}\label{eq:def_of_Cjinarb}
\mc{C}_{j-\inarb}'' = \{ U \subseteq \mc{C}'' ~|~ U ~\mbox{satisfies \eqref{eq:j-inarb_1} and \eqref{eq:j-inarb_2}}\}.
\end{align}
With this, we have
\begin{align*}
\vt_j = -\frac{1}{L_{\mc{D}}(\kappa)}\sum_{U \in \mc{C}''_{j-\inarb}} \left(h(U)\sum_{\substack{\wt{\mc{R}}\in\mc{T}_{\mc{D}}(\mc{C}' \cup \{j\})\\
V[\wt{\mc{R}},j]=U}}\kappa_{\wt{\mc{R}}}\right).
\end{align*}

\begin{corollary}\label{cor:forallkappa_hU<=0}
Let $(\mc{X},\mc{C},\mc{R})$ be a reaction network for which $\ell = t = 1$ and $\delta = 1$. Assume that $(\mc{C},\mc{R})$ is not strongly connected and let $h \in \R^c$ be as in \eqref{eq:fix_h}. Then for all $\kappa:\mc{R}\to\RPL$ we have $E^{\kappa}_+\neq\emptyset$ if and only if
\begin{align}\label{eq:equiv_EPL_nonempty_1}
\mbox{for all $j\in\mc{C}''$ and} \begin{cases} \mbox{for all $U\in\mc{C}_{j-\inarb}''$ we have $h(U)\leq0$ and} \\
                                                \mbox{there exists $U\in\mc{C}_{j-\inarb}''$ such that $h(U) < 0$.} \\\end{cases}
\end{align}
\end{corollary}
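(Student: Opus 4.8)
The plan is to derive the equivalence directly from the formula for $\vt_j$ established just before the statement, combined with the characterisation \eqref{eq:EPL_NONEMPTY_iff_VT_IS_POS} that $E^{\kappa}_+\neq\emptyset$ if and only if $\vt_j>0$ for all $j\in\mc{C}''$. Recall that we have
\begin{align*}
\vt_j = -\frac{1}{L_{\mc{D}}(\kappa)}\sum_{U \in \mc{C}''_{j-\inarb}} \left(h(U)\sum_{\substack{\wt{\mc{R}}\in\mc{T}_{\mc{D}}(\mc{C}' \cup \{j\})\\ V[\wt{\mc{R}},j]=U}}\kappa_{\wt{\mc{R}}}\right),
\end{align*}
where $L_{\mc{D}}(\kappa)>0$ for every $\kappa:\mc{R}\to\RPL$. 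The key observation is that each inner coefficient $c_U(\kappa) := \sum_{\wt{\mc{R}}\in\mc{T}_{\mc{D}}(\mc{C}'\cup\{j\}),\,V[\wt{\mc{R}},j]=U}\kappa_{\wt{\mc{R}}}$ is a sum of products of positive rate coefficients, hence $c_U(\kappa)\geq0$ always, and $c_U(\kappa)>0$ precisely because, by the equivalence recorded in \eqref{eq:j-inarb_1}--\eqref{eq:j-inarb_2}, the set $\mc{C}''_{j-\inarb}$ consists exactly of those $U$ for which at least one such branching exists. So for every $\kappa$, $\vt_j>0$ if and only if $\sum_{U\in\mc{C}''_{j-\inarb}} h(U)\,c_U(\kappa) < 0$.

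For the backward direction (sufficiency of \eqref{eq:equiv_EPL_nonempty_1}), I would argue as follows. Fix an arbitrary $\kappa$ and an arbitrary $j\in\mc{C}''$. If $h(U)\leq0$ for all $U\in\mc{C}''_{j-\inarb}$, then since each $c_U(\kappa)>0$, every term $h(U)c_U(\kappa)$ is non-positive; and since there exists at least one $U$ with $h(U)<0$, whose coefficient $c_U(\kappa)$ is strictly positive, the whole sum is strictly negative. Dividing by $-L_{\mc{D}}(\kappa)<0$ flips the sign and yields $\vt_j>0$. As this holds for every $j$, \eqref{eq:EPL_NONEMPTY_iff_VT_IS_POS} gives $E^{\kappa}_+\neq\emptyset$, and since $\kappa$ was arbitrary we are done.

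The forward direction (necessity) is the more delicate step, and I expect it to be the main obstacle. Assume $E^{\kappa}_+\neq\emptyset$ for all $\kappa$, equivalently $\sum_{U\in\mc{C}''_{j-\inarb}} h(U)c_U(\kappa)<0$ for all $\kappa$ and all $j$. I need to extract, for each fixed $j$, the two pointwise conditions on $h(U)$. The idea is to treat the coefficients $c_U(\kappa)$ as functions of $\kappa$ that can be driven independently toward extremes. Concretely, to show $h(U_0)\leq0$ for a given $U_0\in\mc{C}''_{j-\inarb}$, I would choose a branching $\wt{\mc{R}}_0$ witnessing $U_0$ and send the rate coefficients along $\wt{\mc{R}}_0$ to be overwhelmingly large (or scale all others to be negligible) so that the term $h(U_0)c_{U_0}(\kappa)$ dominates the sum; the sign constraint on the whole sum then forces $h(U_0)\leq0$ in the limit. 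The care required is that $c_U(\kappa)$ involves products over arcs shared among several branchings, so the coefficients are not fully independent; one must verify that a suitable one-parameter family of $\kappa$ isolates the desired term. For the strict inequality (existence of some $U$ with $h(U)<0$), observe that if $h(U)=0$ for all $U\in\mc{C}''_{j-\inarb}$ then the sum would vanish, contradicting strict negativity for any single $\kappa$. Assembling both conditions over all $j$ yields \eqref{eq:equiv_EPL_nonempty_1}.
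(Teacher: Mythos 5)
Your reduction to the sign of $\sum_{U\in\mc{C}''_{j-\inarb}} h(U)c_U(\kappa)$ and your proof of sufficiency coincide with the paper's argument, and your treatment of the degenerate case ($h(U)=0$ for all $U$) is also the paper's. The genuine gap is exactly where you flag it: in the necessity direction you assert that a one-parameter family of $\kappa$ can make the term of a hypothetical $\ol{U}$ with $h(\ol{U})>0$ dominate, but you do not exhibit such a family nor verify that it isolates that term, and this verification is the entire content of the hard direction. The paper's resolution is to perturb $\kappa$ only on the boundary arcs $\varrho^{\inarc}(\ol{U})\cup\varrho^{\outarc}(\ol{U})$, sending them to zero. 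This works for two structural reasons: (i) any branching $\wt{\mc{R}}$ with $V[\wt{\mc{R}},j]=\ol{U}$ splits into a $j$-arborescence on $\ol{U}$ and a $\mc{C}'$-branching on $\mc{C}\bs\ol{U}$, so it uses no boundary arc and its product $\kappa_{\wt{\mc{R}}}$ is untouched; (ii) any branching with $V[\wt{\mc{R}},j]=U\neq\ol{U}$ must contain a boundary arc (a vertex of $\ol{U}\bs U$ needs a path to $\mc{C}'$ that leaves $\ol{U}$, and a vertex of $U\bs\ol{U}$ needs a path to $j\in\ol{U}$ that enters $\ol{U}$), so its product tends to zero. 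Hence the positive $\ol{U}$-term is eventually larger in absolute value than everything else, contradicting negativity of the sum.

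Your own sketch can in fact be completed, but by an observation you do not make: all elements of $\mc{T}_{\mc{D}}(\mc{C}'\cup\{j\})$ have the same cardinality (one out-arc per non-root vertex), so no branching is contained in another. Consequently, if you fix one branching $\wt{\mc{R}}_0$ with $V[\wt{\mc{R}}_0,j]=\ol{U}$ and scale every arc \emph{outside} $\wt{\mc{R}}_0$ by $\eps\to0$ (keeping the arcs of $\wt{\mc{R}}_0$ fixed), then $\kappa_{\wt{\mc{R}}_0}$ is constant while every other branching's product is $O(\eps)$, and the sum tends to $h(\ol{U})\kappa_{\wt{\mc{R}}_0}>0$. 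Your first variant (inflating the arcs of $\wt{\mc{R}}_0$) also works for the same cardinality reason, since any other branching shares at most $|\wt{\mc{R}}_0|-1$ arcs with $\wt{\mc{R}}_0$. Either way, the missing counting or boundary-arc argument must be supplied; as written, the necessity direction is an announced plan rather than a proof.
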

\begin{proof}
Since $L_{\mc{D}}(\kappa)$ is a positive number, it suffices to show that \eqref{eq:equiv_EPL_nonempty_1} is equivalent to
\begin{align}\label{eq:forall_j_<0}
\mbox{for all $\kappa:\mc{R}\to\RPL$ and for all $j \in \mc{C}''$ we have} ~ \sum_{U \in \mc{C}''_{j-\inarb}} \left(h(U)\sum_{\substack{\wt{\mc{R}}\in\mc{T}_{\mc{D}}(\mc{C}' \cup \{j\})\\
V[\wt{\mc{R}},j]=U}}\kappa_{\wt{\mc{R}}}\right) < 0.
\end{align}
It is obvious that \eqref{eq:equiv_EPL_nonempty_1} implies \eqref{eq:forall_j_<0}.

To prove the other direction, assume that \eqref{eq:forall_j_<0} holds and fix $j \in \mc{C}''$. It is clear that the case $h(U) = 0$ for all $U \in \mc{C}''_{j-\inarb}$ would contradict \eqref{eq:forall_j_<0}. Thus, it suffices to exclude that there exists $U \in \mc{C}''_{j-\inarb}$ such that $h(U) > 0$. Suppose by contradiction that $\ol{U} \in \mc{C}''_{j-\inarb}$ is such that $h(\ol{U}) > 0$. If there is no other element of $\mc{C}''_{j-\inarb}$ than $\ol{U}$ then we obviously get a contradiction with \eqref{eq:forall_j_<0}. So suppose for the rest of this proof that $\ol{U}$ is not the only element of $\mc{C}''_{j-\inarb}$. Then
\begin{align}\label{eq:olU+notolU}
\sum_{U \in \mc{C}''_{j-\inarb}} \left(h(U)\sum_{\substack{\wt{\mc{R}}\in\mc{T}_{\mc{D}}(\mc{C}' \cup \{j\})\\
V[\wt{\mc{R}},j]=U}}\kappa_{\wt{\mc{R}}}\right) =
h(\ol{U})\sum_{\substack{\wt{\mc{R}}\in\mc{T}_{\mc{D}}(\mc{C}' \cup \{j\})\\
V[\wt{\mc{R}},j]=\ol{U}}}\kappa_{\wt{\mc{R}}} + \sum_{U \in \mc{C}''_{j-\inarb} \bs \{\ol{U}\}} \left(h(U)\sum_{\substack{\wt{\mc{R}}\in\mc{T}_{\mc{D}}(\mc{C}' \cup \{j\})\\
V[\wt{\mc{R}},j]=U}}\kappa_{\wt{\mc{R}}}\right),
\end{align}
where the first term is positive and is not affected by $\kappa|_{\varrho^{\inarc}(\ol{U}) \cup \varrho^{\outarc}(\ol{U})}$, while for all $U \in \mc{C}''_{j-\inarb} \bs \{\ol{U}\}$ and for all $\wt{\mc{R}}\in\mc{T}_{\mc{D}}(\mc{C}' \cup \{j\})$ such that $V[\wt{\mc{R}},j]=U$, we have $\wt{\mc{R}} \cap (\varrho^{\inarc}(\ol{U}) \cup \varrho^{\outarc}(\ol{U})) \neq \emptyset$. Thus, by setting the values of $\kappa|_{\varrho^{\inarc}(\ol{U}) \cup \varrho^{\outarc}(\ol{U})}$ close enough to zero, we can achieve that the absolute value of the second term of the right hand side of \eqref{eq:olU+notolU} is smaller than the (positive) value of the first term in the same. This contradicts \eqref{eq:forall_j_<0}.
\end{proof}

Note that Corollary \ref{cor:forallkappa_hU<=0} is a generalisation of Corollary \ref{cor:tree-like_forall}. However, there are certain redundancies in the set of conditions in \eqref{eq:equiv_EPL_nonempty_1}, and therefore our aim in the rest of this section is to get rid of these. In order to illustrate the result of Corollary \ref{cor:forallkappa_hU<=0} and also the redundancy in \eqref{eq:equiv_EPL_nonempty_1}, consider that the graph of complexes takes the form

\begin{align}\label{eq:xypic_U}
\xymatrix{
                  &                   & C_4 \ar@<.5ex>[r] & C_3 \ar@<.5ex>[l]
                                                                \ar[rd]       &            &     \\
C_8 \ar@<.5ex>[r] & C_7 \ar@<.5ex>[l]
                        \ar[ru]
                        \ar[rd]       &                   &                   & C_2 \ar[r] & C_1. \\
                  &                   & C_6 \ar@<.5ex>[r]
                                            \ar[rru]      & C_5 \ar@<.5ex>[l]
                                                                \ar[ru]       &            &     \\
}
\end{align}
One can check by short calculation that
\begin{align}\label{eq:ex_U}
\begin{array}{crccccl}
\mc{C}''_{2-\inarb} = & \{ & \{2,3,4,5,6,7,8\} &             &             &             & \},              \\
\mc{C}''_{3-\inarb} = & \{ & \{3,4\},          & \{3,4,7,8\} &             &             & \},              \\
\mc{C}''_{4-\inarb} = & \{ & \{4\},            & \{3,4\},    & \{4,7,8\},  & \{3,4,7,8\} & \},              \\
\mc{C}''_{5-\inarb} = & \{ & \{5\},            & \{5,6\},    & \{5,6,7,8\} &             & \},              \\
\mc{C}''_{6-\inarb} = & \{ & \{6\},            & \{5,6\},    & \{6,7,8\},  & \{5,6,7,8\} & \},              \\
\mc{C}''_{7-\inarb} = & \{ & \{7,8\}           &             &             &             & \}, ~ \mbox{and} \\
\mc{C}''_{8-\inarb} = & \{ & \{8\},            & \{7,8\}     &             &             & \}.              \\
\end{array}
\end{align}
Note however that e.g. the set $\{3,4,7,8\} \in \mc{C}''_{3-\inarb}$ is the disjoint union of the sets $\{3,4\} \in \mc{C}''_{3-\inarb}$ and $\{7,8\} \in \mc{C}''_{7-\inarb}$. Hence, once we require that $h(\{3,4\}) \leq 0$ and $h(\{7,8\}) \leq 0$, it is unnecessary to require also $h(\{3,4,7,8\}) \leq 0$, because then it is automatically satisfied. Similarly, one may easily see for \eqref{eq:xypic_U} that for all $j \in \mc{C}''$ and for all $U \in \mc{C}''_{j-\inarb}$, the set $U$ is the disjoint union of some of the sets
\begin{align*}
\{2,3,4,5,6,7,8\}, \{3,4\}, \{4\}, \{5\}, \{6\}, \{7,8\}, ~ \mbox{and} ~ \{8\}.
\end{align*}
Moreover, such a partition of $U$ is unique. Thus, for \eqref{eq:xypic_U} the following are equivalent.
\begin{align}
\label{eq:equiv_ex_h(U)leq0}&\begin{array}{l}\mbox{For all $j\in\mc{C}''$ and for all $U\in\mc{C}_{j-\inarb}''$ we have $h(U)\leq0$.}\end{array} \\
\label{eq:equiv_ex_h(U(i))leq0}&
\begin{array}{l}
\mbox{We have $h(\{2,3,4,5,6,7,8\}) \leq 0$, $h(\{3,4\}) \leq 0$, $h(\{4\}) \leq 0$,}\\
\mbox{$h(\{5\}) \leq 0$, $h(\{6\}) \leq 0$, $h(\{7,8\}) \leq 0$, and $h(\{8\}) \leq 0$} \\
\mbox{(i.e., we require the non-positivity of $h$ only for the first sets in each row of \eqref{eq:ex_U}).}
\end{array}
\end{align}

We formulate in the following lemma that the above mentioned facts about \eqref{eq:xypic_U} hold generally.

\begin{lemma}\label{lemma:U_partition}
Assume that $(\mc{C}, \mc{R})$ satisfies \eqref{eq:tree-like_1}. For $j \in \mc{C}''$ let $\mc{C}''_{j-\inarb}$ be as in \eqref{eq:def_of_Cjinarb}. For $i \in \mc{C}$ let us define $U(i) \subseteq \mc{C}$ by
\begin{align}\label{eq:def_of_U(i)}
U(i)=\{k\in\mc{C} ~|~ \mbox{for all $P \in \orarr{k,\mc{C}'}$ we have $i \in V[P]$}\},
\end{align}
i.e., $k \in \mc{C}$ is an element of $U(i)$ if all the directed paths from $k$ to $\mc{C}'$ must traverse $i$ (clearly, for $i \in \mc{C}''$ we have $U(i) \subseteq \mc{C}''$). Then
\begin{itemize}
  \item[(a)] for all $j \in \mc{C}''$ we have $U(j) \in \mc{C}''_{j-\inarb}$ and
  \item[(b)] for all $j \in \mc{C}''$ and for all $U \in \mc{C}''_{j-\inarb}$ there exists a unique $\mc{I}_U \subseteq \mc{C}''$ such that
             \begin{align*}
             U = \sideset{}{^*}\bigcup_{i \in \mc{I}_U} U(i),
             \end{align*}
\end{itemize}
where the symbol $^*$ stresses that if $i, i' \in \mc{I}_U$ and $i \neq i'$ then $U(i) \cap U(i') =  \emptyset$.
\end{lemma}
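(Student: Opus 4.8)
The plan is to analyse the sets $U(i)$ through the \emph{post-domination} partial order $\preceq$ on $\mc{C}''$ defined by $a \preceq b$ if and only if $a \in U(b)$, i.e.\ every directed path from $a$ to $\mc{C}'$ passes through $b$. Since $\ell = t = 1$ forces $\mc{C}'$ to be the unique absorbing strong component, every vertex of $\mc{C}$ admits a directed path to $\mc{C}'$, so $\orarr{k,\mc{C}'} \neq \emptyset$ for all $k$; I will use this repeatedly. The entire argument rests on one structural fact (the crux): for each fixed $k$ the post-dominators of $k$ form a $\preceq$-chain, equivalently the family $\{U(i)\}_{i \in \mc{C}''}$ is laminar. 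I would prove this by the standard dominator argument: if $i, i'$ both post-dominate $k$ but neither post-dominates the other, pick a path from $i$ to $\mc{C}'$ avoiding $i'$, take a simple $P \in \orarr{k,\mc{C}'}$ through both $i$ and $i'$ (say $i$ precedes $i'$ on $P$), and splice the initial segment of $P$ up to $i$ with that avoiding-path; the resulting walk reaches $\mc{C}'$ from $k$ while avoiding $i'$, contradicting $k \in U(i')$. Reflexivity, transitivity, and antisymmetry of $\preceq$ follow immediately from splicing and simplicity of paths, so $\{U(i)\}$ is laminar and $i \mapsto U(i)$ is injective.

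For $(a)$ I would verify \eqref{eq:j-inarb_1} and \eqref{eq:j-inarb_2} directly. For \eqref{eq:j-inarb_1}, given $i' \in U(j)$ take a simple $P \in \orarr{i',\mc{C}'}$; it meets $j$, and its initial segment $P'$ from $i'$ to $j$ stays in $U(j)$, for if some $v \in V[P']$ had a path to $\mc{C}'$ avoiding $j$, splicing it after the $i' \to v$ part of $P'$ would give an $i' \to \mc{C}'$ path avoiding $j$, contradicting $i' \in U(j)$. For \eqref{eq:j-inarb_2}, given $i' \in \mc{C}'' \bs U(j)$ there is a simple $Q \in \orarr{i',\mc{C}'}$ avoiding $j$; if $Q$ met some $v \in U(j)$, the tail of $Q$ from $v$ would be a $v \to \mc{C}'$ path avoiding $j$, contradicting $v \in U(j)$, so $V[Q] \subseteq \mc{C} \bs U(j)$. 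Hence $U(j) \in \mc{C}''_{j-\inarb}$.

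For $(b)$ the engine is the clean characterisation $U(i) \subseteq U \iff i \in U$ for $i \in \mc{C}''$ and $U \in \mc{C}''_{j-\inarb}$: the forward direction is $i \in U(i)$; conversely, if $i \in U$ and $k \in U(i) \bs U$, then \eqref{eq:j-inarb_2} gives a $k \to \mc{C}'$ path inside $\mc{C} \bs U$, hence avoiding $i \in U$, contradicting $k \in U(i)$. I would then take $\mc{I}_U$ to be the set of $\preceq$-maximal elements of $U$. Covering holds because every $k \in U$ has, by the chain property, a unique $\preceq$-largest post-dominator $m(k)$ in $U$, and then $m(k) \in \mc{I}_U$ with $k \in U(m(k))$; disjointness of the $U(i)$ for $i \in \mc{I}_U$ is exactly laminarity plus maximality (two intersecting maximal sets would be nested, hence equal, hence have equal index by injectivity). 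This gives existence.

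For uniqueness, suppose $U = \bigcup_{i \in \mc{I}} U(i)$ is any partition into disjoint sets of the form $U(i)$. Each such $i$ lies in $U$. Given $k$ in block $U(i)$, I would show $i = m(k)$: from $k \in U(i)$ and $i \in U$ we get $i \preceq m(k)$; writing $i'$ for the block of $m(k)$, transitivity yields $k \in U(m(k)) \subseteq U(i')$, so by disjointness $i = i'$ and thus $m(k) \in U(i)$, i.e.\ $m(k) \preceq i$; antisymmetry forces $i = m(k)$. Hence $\mc{I} = \{ m(k) : k \in U \}$, the $\preceq$-maximal elements of $U$, so $\mc{I}_U$ is forced and unique. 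I expect the chain/laminarity step to be the only genuine obstacle; once it is in place, $(a)$, the characterisation $U(i) \subseteq U \iff i \in U$, and the maximal-element decomposition are routine.
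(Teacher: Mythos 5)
Your proof is correct and follows essentially the same route as the paper: your laminarity/chain property for the post-dominator sets is exactly the paper's Lemma \ref{lemma:U(i)_abcd}~$(c)$ (proved there by the same two-path splicing argument), your verification of \eqref{eq:j-inarb_1} and \eqref{eq:j-inarb_2} is a path-level restatement of Lemma \ref{lemma:U(i)_abcd}~$(d)$, and your decomposition of $U$ into the $\preceq$-maximal blocks via the characterisation $U(i)\subseteq U \Leftrightarrow i\in U$ is precisely how the paper proves part $(b)$. The only differences are cosmetic: you package everything in the post-domination partial order and spell out the uniqueness argument in more detail than the paper does.
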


The proof of Lemma \ref{lemma:U_partition} is carried out right after the proof of Lemma \ref{lemma:U(i)_abcd} below. Clearly, we have $U(\mc{C}') = \mc{C}$, but we will use the set $U(\mc{C}')$ only after Theorem \ref{thm:main}. Before that, we will be interested in the collection $\{U(i) ~|~ i \in \mc{C}''\}$. We remark that the notation $U(i)$ is in accordance with the similar one in Subsection \ref{subsec:tree-like}. Note that for the reaction network \eqref{eq:xypic_U} we have
\begin{align*}
&U(2) = \{2,3,4,5,6,7,8\}, U(3) = \{3,4\}, U(4) = \{4\}, U(5) = \{5\},\\
&U(6) = \{6\}, U(7) = \{7,8\}, ~ \mbox{and} ~ U(8) = \{8\}.
\end{align*}

Before we prove Lemma \ref{lemma:U_partition}, we explore some properties of the collection $\{U(i)~|~i\in\mc{C}''\}$ in the following lemma. Note that for all $k\in\mc{C}''$ there exists a directed path from $k$ to $\mc{C}'$ (i.e., for all $k \in \mc{C}''$ we have $\orarr{k,\mc{C}'} \neq \emptyset$).

\begin{lemma}\label{lemma:U(i)_abcd}
Assume that $(\mc{C}, \mc{R})$ satisfies \eqref{eq:tree-like_1}. Let $i, i' \in \mc{C}''$ and define $U(i)$ and $U(i')$ as in \eqref{eq:def_of_U(i)}. Then
\begin{itemize}
  \item[(a)] $i \in U(i)$,
  \item[(b)] if $i' \in U(i)\bs\{i\}$ then $U(i') \subseteq U(i) \bs \{i\}$,
  \item[(c)] if $i \neq i'$ then either $U(i) \subsetneq U(i')$ or $U(i) \supsetneq U(i')$ or $U(i) \cap U(i') = \emptyset$, and
  \item[(d)] $\varrho^{\outarc}(U(i))\subseteq \varrho^{\outarc}(i)$ (i.e., all the arcs that leave $U(i)$ have tail $i$).
\end{itemize}
\end{lemma}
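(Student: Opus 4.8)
The plan is to prove (a)--(d) in order, using throughout one elementary graph-theoretic device: every directed walk from a vertex $x$ to $\mc{C}'$ contains a directed path from $x$ to $\mc{C}'$ whose vertex set is contained in that of the walk. Combined with the defining equivalence $k\notin U(i)$ iff some $P\in\orarr{k,\mc{C}'}$ has $i\notin V[P]$, this means I only ever need to exhibit a \emph{walk} (not a simple path) from $x$ to $\mc{C}'$ avoiding a given vertex in order to conclude $x\notin U(\cdot)$. Recall also that $\orarr{k,\mc{C}'}\neq\emptyset$ for every $k\in\mc{C}''$. Part (a) is then immediate, since every $P\in\orarr{i,\mc{C}'}$ starts at $i$, so $i\in V[P]$ and $i\in U(i)$.

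For part (b) I would assume $i'\in U(i)\bs\{i\}$ and take an arbitrary $k\in U(i')$. Every $P\in\orarr{k,\mc{C}'}$ passes through $i'$, and the sub-path of $P$ from $i'$ onward lies in $\orarr{i',\mc{C}'}$, hence passes through $i$ because $i'\in U(i)$; thus $i\in V[P]$ and $k\in U(i)$. To exclude $k=i$, note that $k=i$ would make every path from $i$ to $\mc{C}'$ visit $i'$ and then (lying in $U(i)$ from $i'$ on) return to $i$, a repeated vertex on a simple path, which is impossible since $i\neq i'$. This gives $U(i')\subseteq U(i)\bs\{i\}$.

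For part (d) I would take $(u,v)$ with $u\in U(i)$ and $v\notin U(i)$ and suppose, for contradiction, $u\neq i$. Since $v\notin U(i)$ there is $S\in\orarr{v,\mc{C}'}$ with $i\notin V[S]$ (if $v\in\mc{C}'$ take the trivial path at $v$, which avoids $i$ as $i\in\mc{C}''$). Prepending the arc $(u,v)$ to $S$ yields a walk from $u$ to $\mc{C}'$ whose vertices are $u$ and those of $S$; since $u\neq i$ this walk avoids $i$, so by the reduction some path in $\orarr{u,\mc{C}'}$ avoids $i$, contradicting $u\in U(i)$. Hence $u=i$, i.e.\ $\varrho^{\outarc}(U(i))\subseteq\varrho^{\outarc}(i)$.

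The main obstacle is part (c). I would first reduce laminarity to the dichotomy: \textbf{if $U(i)\cap U(i')\neq\emptyset$ then $i\in U(i')$ or $i'\in U(i)$.} Granting it, (b) turns $i'\in U(i)$ into $U(i')\subsetneq U(i)$ and $i\in U(i')$ into $U(i)\subsetneq U(i')$ (proper because $i\in U(i)$ by (a); the two cases are mutually exclusive, else (b) would force $U(i)\subsetneq U(i')\subsetneq U(i)$), while an empty intersection is the third alternative. To prove the dichotomy I would argue by contradiction: if it fails, pick $Q\in\orarr{i,\mc{C}'}$ avoiding $i'$, $Q'\in\orarr{i',\mc{C}'}$ avoiding $i$, and $k\in U(i)\cap U(i')$. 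Any $P\in\orarr{k,\mc{C}'}$ contains both $i$ and $i'$ in some order; if $i$ precedes $i'$, the initial segment of $P$ from $k$ to $i$ avoids $i'$, so concatenating it with $Q$ gives a walk from $k$ to $\mc{C}'$ avoiding $i'$ and hence $k\notin U(i')$, a contradiction; the case $i'$ before $i$ is symmetric via $Q'$, contradicting $k\in U(i)$. The only delicate point anywhere is that these concatenations need not be simple paths, which is exactly why I route every such argument through the walk-to-path reduction rather than working with paths directly.
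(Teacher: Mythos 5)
Your proof is correct and follows essentially the same route as the paper's: (a) is immediate, (b) passes through sub-paths, (c) reduces laminarity to the dichotomy \lq\lq$i \in U(i')$ or $i' \in U(i)$'' and then invokes (b), and (d) prepends the leaving arc to a path avoiding $i$. If anything, your systematic walk-to-path reduction is slightly more careful than the paper, which in (d) asserts that $\con(i',P)$ is itself a directed path without addressing the possibility $i' \in V[P]$.
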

\begin{proof}
Statement $(a)$ is trivial.

To prove statement $(b)$, assume that $i'\in U(i)\bs\{i\}$ and let $i''\in U(i')$. Then all the directed paths from $i''$ to $\mc{C}'$ traverse $i'$ and since $i'\in U(i)$, they must also traverse $i$. This proves that $U(i')\subseteq U(i)$. To prove $(b)$, it remains to show that $i\notin U(i')$. However, it is also obvious, because $i'\in U(i)$ guarantees that there exists a directed path from $i'$ to $\mc{C}'$, which traverses $i$. Since there cannot be vertex repetition in a directed path and $i\neq i'$, this also shows that there exists a directed path from $i$ to $\mc{C}'$ that does not traverse $i'$.

To show $(c)$, suppose that $U(i)\cap U(i')\neq\emptyset$ and let $i''\in U(i)\cap U(i')$. Then all the directed paths from $i''$ to $\mc{C}'$ must traverse both $i$ and $i'$. Note that the order of $i$ and $i'$ on these directed paths must be the same, otherwise we could easily construct two directed paths from $i''$ to $\mc{C}'$, one of which avoids $i$ and the other one avoids $i'$. As a consequence, either $i\in U(i')\bs\{i'\}$ or $i'\in U(i)\bs\{i\}$. In both cases we are done by $(b)$.

To prove $(d)$, suppose by contradiction that there exist $i'\in U(i)\bs\{i\}$ and $i''\in\mc{C}\bs U(i)$ such that $(i',i'')\in\mc{R}$. Then there exists $P\in\orarr{i'',\mc{C}'}$ such that $i \notin V[P]$. Therefore $\con(i',P)\in\orarr{i',\mc{C}'}$ is a directed path that avoids $i$, contradicting $i'\in U(i)$ (see Appendix \ref{sec:app_directed_graphs} for the definition of the concatenation).
\end{proof}

\begin{proofof}[Proof of Lemma \ref{lemma:U_partition}]
To prove $(a)$, fix $j \in \mc{C}''$ and let $i' \in \mc{C}''$. Assume first that $i' \in U(j)$ and let $P \in \orarr{i',\mc{C}'}$. By Lemma \ref{lemma:U(i)_abcd} $(d)$, $V[P^{i':j}] \subseteq U(j)$ ($P^{i':j}$ denotes the part of $P$ from $i'$ to $j$, see Appendix \ref{sec:app_directed_graphs}). Assume now that $i' \in \mc{C}'' \bs U(j)$. We need to show that there exists a directed path from $i'$ to $\mc{C}'$ that avoids $U(j)$. Suppose by contradiction that for all $P \in \orarr{i',\mc{C}'}$ we have $V[P] \cap U(j) \neq \emptyset$. Then, by Lemma \ref{lemma:U(i)_abcd} $(d)$, it follows that $j \in V[P]$, which contradicts $i' \notin U(j)$.

It is left to prove $(b)$. Fix $j \in \mc{C}''$ and $U \in \mc{C}''_{j-\inarb}$. For $i \in U$, we have $U(i) \subseteq U$, because otherwise there would be an element $i' \in U(i) \bs U$, for which there does not exist a directed path from $i'$ to $\mc{C}'$, which avoids $U$. Since we also have $i \in U(i)$ (see Lemma \ref{lemma:U(i)_abcd} $(a)$), it holds that $U = \cup_{i \in U} U(i)$. From this and Lemma \ref{lemma:U(i)_abcd} $(c)$ it follows that $U$ is indeed the disjoint union of some $U(i)$'s and this partition is unique. Namely those $U(i)$'s take part in the partition, which are maximal inside $U$.
\end{proofof}

We obtain the following corollary, which is the first step towards the simplification of \eqref{eq:equiv_EPL_nonempty_1}.

\begin{corollary}\label{cor:redundant_U_U(i)}
Assume that $(\mc{C}, \mc{R})$ satisfies \eqref{eq:tree-like_1} and let $h \in \R^c$ be arbitrary. For $i, j \in \mc{C}''$ let $\mc{C}''_{j-\inarb}$ be as in \eqref{eq:def_of_Cjinarb} and let $U(i)$ be as in \eqref{eq:def_of_U(i)}. Then the following two statements are equivalent.
\begin{itemize}
  \item[(A)] For all $j\in\mc{C}''$ and for all $U\in\mc{C}_{j-\inarb}''$ we have $h(U)\leq0$.
  \item[(B)] For all $i\in\mc{C}''$ we have $h(U(i))\leq0$.
\end{itemize}
\end{corollary}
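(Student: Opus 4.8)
The plan is to prove the equivalence of statements (A) and (B) in Corollary~\ref{cor:redundant_U_U(i)} by exploiting the partition result of Lemma~\ref{lemma:U_partition}~(b), which expresses every admissible set $U$ as a disjoint union of sets of the form $U(i)$. The direction (A)$\To$(B) is immediate, since by Lemma~\ref{lemma:U_partition}~(a) each $U(i)$ is itself a member of $\mc{C}''_{i-\inarb}$, so the non-positivity required in (A) applies directly to $U = U(i)$. The substance of the proof lies in the converse (B)$\To$(A).

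For the converse, I would fix $j \in \mc{C}''$ and an arbitrary $U \in \mc{C}''_{j-\inarb}$, and invoke Lemma~\ref{lemma:U_partition}~(b) to write
\begin{align*}
U = \sideset{}{^*}\bigcup_{i \in \mc{I}_U} U(i)
\end{align*}
for the unique disjoint collection $\mc{I}_U \subseteq \mc{C}''$. Because this is a genuine disjoint union, the convention \eqref{eq:p(X_0)=sump(x)} gives additivity of $h$ over the union, namely
\begin{align*}
h(U) = \sum_{i \in \mc{I}_U} h(U(i)).
\end{align*}
By hypothesis (B), every summand $h(U(i))$ is non-positive, hence so is the sum, giving $h(U) \leq 0$. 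Since $j$ and $U$ were arbitrary, (A) follows. This completes the argument and there is essentially no remaining obstacle.

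The only point demanding care is the justification of the additivity step: one must confirm that the decomposition supplied by Lemma~\ref{lemma:U_partition}~(b) is a partition into pairwise disjoint blocks (which the starred-union notation and the lemma's explicit disjointness clause guarantee), so that summing $h$ over $U$ coincides with summing $h$ over each $U(i)$ separately. Given that the lemma has already done the combinatorial heavy lifting of producing and verifying this partition, the proof of the corollary is purely a matter of applying additivity of $h$ on disjoint sets; the hard part has been isolated into Lemma~\ref{lemma:U_partition} and Lemma~\ref{lemma:U(i)_abcd}, so the corollary itself requires no further graph-theoretic input.
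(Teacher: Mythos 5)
Your proof is correct and follows exactly the paper's own route: the forward direction via Lemma \ref{lemma:U_partition} $(a)$ (each $U(i)$ lies in $\mc{C}''_{i-\inarb}$), and the converse via the disjoint-union decomposition of Lemma \ref{lemma:U_partition} $(b)$ together with additivity of $h$ in the sense of \eqref{eq:p(X_0)=sump(x)}. You merely spell out the additivity step that the paper leaves implicit.
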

\begin{proof}
Suppose that $(A)$ holds. Then $(B)$ follows directly from Lemma \ref{lemma:U_partition} $(a)$.

Now suppose that $(B)$ holds. Statement $(A)$ is then obtained immediately from Lemma \ref{lemma:U_partition} $(b)$.
\end{proof}

\begin{corollary}\label{cor:forallkappa_hU(i)<=0}
Let $(\mc{X},\mc{C},\mc{R})$ be a reaction network for which $\ell = t = 1$ and $\delta = 1$. Assume that $(\mc{C},\mc{R})$ is not strongly connected and let $h \in \R^c$ be as in \eqref{eq:fix_h}. For $i, j \in \mc{C}''$ let $\mc{C}''_{j-\inarb}$ be as in \eqref{eq:def_of_Cjinarb} and let $U(i)$ be as in \eqref{eq:def_of_U(i)}. Then for all $\kappa:\mc{R}\to\RPL$ we have $E^{\kappa}_+\neq\emptyset$ if and only if
\begin{align}\label{eq:equiv_EPL_nonempty_2}
\begin{cases}
\mbox{for all $i \in \mc{C}''$ we have $h(U(i))\leq0$ and} \\
\mbox{for all $j\in\mc{C}''$ there exists $i \in \cup_{U \in \mc{C}''_{j-\inarb}} \mc{I}_U$ such that $h(U(i)) < 0$,} \\
\end{cases}
\end{align}
where $\mc{I}_U$ is the unique subset of $\mc{C}''$ such that $U = \cup_{i' \in \mc{I}_U}^* U(i')$ (see Lemma \ref{lemma:U_partition} $(b)$.)
\end{corollary}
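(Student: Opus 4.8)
The plan is to obtain Corollary \ref{cor:forallkappa_hU(i)<=0} as a direct repackaging of Corollary \ref{cor:forallkappa_hU<=0}, using the decomposition results of Lemma \ref{lemma:U_partition} (and Corollary \ref{cor:redundant_U_U(i)}, which rests on Lemma \ref{lemma:U(i)_abcd}) to rewrite the condition \eqref{eq:equiv_EPL_nonempty_1} in the form \eqref{eq:equiv_EPL_nonempty_2}. Recall that Corollary \ref{cor:forallkappa_hU<=0} already gives that ``$E^{\kappa}_+ \neq \emptyset$ for all $\kappa$'' is equivalent to the conjunction of
\begin{align*}
&\mbox{(i) for all $j \in \mc{C}''$ and all $U \in \mc{C}''_{j-\inarb}$ we have $h(U) \leq 0$, and} \\
&\mbox{(ii) for all $j \in \mc{C}''$ there exists $U \in \mc{C}''_{j-\inarb}$ with $h(U) < 0$.}
\end{align*}
Thus it suffices to prove that the pair (i)$\wedge$(ii) is equivalent to \eqref{eq:equiv_EPL_nonempty_2}.

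First I would dispose of the non-strict half. Condition (i) is precisely statement $(A)$ of Corollary \ref{cor:redundant_U_U(i)}, which is there shown to be equivalent to statement $(B)$, namely ``for all $i \in \mc{C}''$ we have $h(U(i)) \leq 0$''. This is exactly the first line of \eqref{eq:equiv_EPL_nonempty_2}, so this step is a verbatim appeal to an already-proved equivalence and needs no new argument.

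The substantive step is to show that, under the standing assumption that (i) holds, condition (ii) is equivalent to the second line of \eqref{eq:equiv_EPL_nonempty_2}. Fix $j \in \mc{C}''$. By Lemma \ref{lemma:U_partition}$(b)$, every $U \in \mc{C}''_{j-\inarb}$ is a \emph{disjoint} union $U = \cup_{i \in \mc{I}_U}^* U(i)$, so the additivity convention \eqref{eq:p(X_0)=sump(x)} gives $h(U) = \sum_{i \in \mc{I}_U} h(U(i))$. Since (i), equivalently the first line of \eqref{eq:equiv_EPL_nonempty_2}, forces every summand $h(U(i))$ to be non-positive, the sum $h(U)$ is strictly negative if and only if at least one summand is strictly negative. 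Hence ``there exists $U \in \mc{C}''_{j-\inarb}$ with $h(U) < 0$'' holds if and only if ``there exists $i \in \cup_{U \in \mc{C}''_{j-\inarb}} \mc{I}_U$ with $h(U(i)) < 0$'' holds, and letting $j$ range over $\mc{C}''$ identifies (ii) with the second line of \eqref{eq:equiv_EPL_nonempty_2}. Combining this with the first step yields (i)$\wedge$(ii) $\Leftrightarrow$ \eqref{eq:equiv_EPL_nonempty_2}, completing the proof.

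There is no serious obstacle here, as both building blocks are already in hand; the only point requiring care is the logical interplay between (i) and (ii). The reduction of the strict inequality (ii) to a statement about the individual blocks $U(i)$ is legitimate only because (i) is assumed to hold simultaneously: a sum of non-positive terms is negative precisely when some single term is negative, and this is what lets one pass from $h(U)<0$ to $h(U(i))<0$ for some $i \in \mc{I}_U$. (Note that the direction ``(ii)$\Rightarrow$ second line'' is unconditional, whereas the converse genuinely uses (i).) For this reason I would present the equivalence as the single statement (i)$\wedge$(ii) $\Leftrightarrow$ (first line)$\wedge$(second line), rather than attempting to treat (ii) in isolation.
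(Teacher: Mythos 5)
Your proposal is correct and follows exactly the route of the paper, whose proof of this corollary is a one-line appeal to Corollary \ref{cor:forallkappa_hU<=0}, Lemma \ref{lemma:U_partition} $(b)$, and Corollary \ref{cor:redundant_U_U(i)}; you have merely spelled out the details that the paper leaves implicit. Your observation that the passage from $h(U)<0$ to $h(U(i))<0$ for some $i \in \mc{I}_U$ genuinely requires the non-strict condition (i) is the right point of care, and your handling of it via the disjointness in Lemma \ref{lemma:U_partition} $(b)$ is exactly what the paper intends.
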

\begin{proof}
The equivalence is a direct consequence of Corollary \ref{cor:forallkappa_hU<=0}, Lemma \ref{lemma:U_partition} $(b)$, and Corollary \ref{cor:redundant_U_U(i)}.
\end{proof}

In order to ease the notation in \eqref{eq:equiv_EPL_nonempty_2}, we define for $j \in \mc{C}''$ the set $W(j)$ by
\begin{align}\label{eq:def_of_W(j)}
W(j) = \bigcup_{U \in \mc{C}''_{j-\inarb}} \mc{I}_U.
\end{align}
By Lemmas \ref{lemma:U_partition} $(b)$ and \ref{lemma:U(i)_abcd} $(c)$, for $i, j \in \mc{C}''$ and $U \in \mc{C}''_{j-\inarb}$ we have
\begin{align}\label{eq:IU_equiv}
i \in \mc{I}_U ~ \mbox{if and only if} ~ i \in U ~ \mbox{and there does not exist} ~ i' \in U \bs \{i\} ~ \mbox{such that} ~ i \in U(i').
\end{align}

To illustrate the result of Corollary \ref{cor:forallkappa_hU(i)<=0}, note that for the reaction network \eqref{eq:xypic_U} we have
\begin{align*}
&\{2,3,4,5,6,7,8\} = U(2),                                                                                                                            \\
&\{3,4\} = U(3),            \{3,4,7,8\} = U(3) \cup^* U(7),                                                                                           \\
&\{4\} = U(4),              \{3,4\} = U(3),                  \{4,7,8\} = U(4) \cup^* U(7),                \{3,4,7,8\} = U(3) \cup^* U(7),             \\
&\{5\} = U(5),              \{5,6\} = U(5) \cup^* U(6),      \{5,6,7,8\} = U(5) \cup^* U(6) \cup^* U(7),                                              \\
&\{6\} = U(6),              \{5,6\} = U(5) \cup^* U(6),      \{6,7,8\} = U(6) \cup^* U(7),                \{5,6,7,8\} = U(5) \cup^* U(6) \cup^* U(7), \\
&\{7,8\} = U(7), ~ \mbox{and}                                                                                                                         \\
&\{8\} = U(8),              \{7,8\} = U(7).                                                                                                           \\
\end{align*}
Thus,
\begin{align*}
\begin{array}{llll}
\mc{I}_{\{2,3,4,5,6,7,8\}} = \{2\},    &                                  &                                     &                                    \\
\mc{I}_{\{3,4\}} = \{3\},              & \mc{I}_{\{3,4,7,8\}} = \{3, 7\}, &                                     &                                    \\
\mc{I}_{\{4\}} = \{4\},                & \mc{I}_{\{3,4\}} = \{3\},        & \mc{I}_{\{4,7,8\}} = \{4, 7\},      &\mc{I}_{\{3,4,7,8\}} = \{3, 7\},    \\
\mc{I}_{\{5\}} = \{5\},                & \mc{I}_{\{5,6\}} = \{5, 6\},     & \mc{I}_{\{5,6,7,8\}} = \{5, 6, 7\}, &                                    \\
\mc{I}_{\{6\}} = \{6\},                & \mc{I}_{\{5,6\}} = \{5, 6\},     & \mc{I}_{\{6,7,8\}} = \{6, 7\},      &\mc{I}_{\{5,6,7,8\}} = \{5, 6, 7\}, \\
\mc{I}_{\{7,8\}} = \{7\}, ~ \mbox{and} &                                  &                                     &                                 \\
\mc{I}_{\{8\}} = \{8\},                & \mc{I}_{\{7,8\}} = \{7\},         &                                     &                                   \\
\end{array}
\end{align*}
and therefore
\begin{align}\label{eq:W(j)'s_for_xypic_U}
\begin{split}
&W(2) = \{2\}, W(3) = \{3, 7\}, W(4) = \{3, 4, 7\}, W(5) = \{5, 6, 7\},\\
&W(6) = \{5, 6, 7\}, W(7) = \{7\}, ~ \mbox{and} ~ W(8) = \{7, 8\}.
\end{split}
\end{align}
Since
\begin{align*}
W(7) \subseteq W(3), W(7) \subseteq W(4), W(7) \subseteq W(5), W(7) \subseteq W(6), ~ \mbox{and} ~ W(7) \subseteq W(8),
\end{align*}
once we require that there exist $i_2 \in W(2)$ and $i_7 \in W(7)$ such that $h(U(i_2)) < 0$ and $h(U(i_7)) < 0$ hold, it is unnecessary to require also e.g. that there exists $i_4 \in W(4)$ such that $h(U(i_4)) < 0$, because that is automatically satisfied. Our aim is to get rid of these sort of redundancies in Corollary \ref{cor:forallkappa_hU(i)<=0}. For this, we first take a closer look at the collection $\{W(j) ~|~ j \in \mc{C}''\}$ in Proposition \ref{prop:equiv_to_i_in_W(j)}. During the proof of that proposition, we will use the following corollary of Menger's Theorem.

\begin{theorem}\label{thm:menger_2}
Let $D = (V,A)$ be a directed graph and let $s, t$ be such that $(s,t) \notin A$. Then there exists $P_1, P_2 \in \orarr{s, t}$ such that $V[P_1] \cap V[P_2] = \{s,t\}$ if and only if for all $i \in V \bs \{s, t\}$ there exists $P \in \orarr{s, t}$ such that $i \notin V[P]$.
\end{theorem}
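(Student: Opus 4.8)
The plan is to derive Theorem \ref{thm:menger_2} from the internally-disjoint-path / vertex-cut form of Menger's Theorem, handling the two implications separately. Recall that for a directed graph $D = (V,A)$ and distinct $s, t$ with $(s,t) \notin A$, this form of Menger's Theorem asserts that the maximum number of pairwise internally vertex-disjoint directed paths from $s$ to $t$ equals the minimum cardinality of a set $S \subseteq V \bs \{s,t\}$ whose deletion destroys every directed path from $s$ to $t$ (an $s$-$t$ separator). The whole strategy is to recognise the two sides of the stated equivalence as, respectively, ``at least two internally disjoint paths'' and ``the minimum separator has size at least $2$'', and then to let Menger bridge them.

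The forward implication needs no machinery. If $P_1, P_2 \in \orarr{s,t}$ satisfy $V[P_1] \cap V[P_2] = \{s,t\}$, then for any fixed $i \in V \bs \{s,t\}$ the vertex $i$ cannot lie in both $V[P_1]$ and $V[P_2]$, since their intersection is only $\{s,t\}$. Hence at least one of $P_1, P_2$ is a directed path from $s$ to $t$ avoiding $i$, which is exactly the path in $\orarr{s,t}$ required by the right-hand condition.

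For the backward implication I would first reformulate the hypothesis as a statement about vertex cuts: the condition that for every $i \in V \bs \{s,t\}$ there is a directed path from $s$ to $t$ avoiding $i$ says precisely that no singleton $\{i\}$ is an $s$-$t$ separator. Because $(s,t) \notin A$, the full set $V \bs \{s,t\}$ is itself an $s$-$t$ separator, so a minimum separator exists; and as soon as $V \bs \{s,t\} \neq \emptyset$ the hypothesis forces $\orarr{s,t} \neq \emptyset$ (the path avoiding a fixed internal vertex is in particular an $s$-$t$ path), so the empty set is not a separator. Consequently the minimum $s$-$t$ separator has cardinality at least $2$, and Menger's Theorem supplies two pairwise internally vertex-disjoint paths $P_1, P_2 \in \orarr{s,t}$; being internally disjoint while both passing through the endpoints $s$ and $t$, they satisfy $V[P_1] \cap V[P_2] = \{s,t\}$.

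The one delicate point, and the step I expect to demand the most care, is extracting from the quantified right-hand statement the two facts that an $s$-$t$ path exists and that no single vertex cuts it, so that the minimum separator is genuinely of size at least $2$ rather than $0$ or $1$. The degenerate case $V = \{s,t\}$ (where the hypothesis is vacuously true yet no $s$-$t$ path exists) does not arise in the intended applications, since the quantification ranges over a present internal vertex $i$; everything else is a direct transcription of the equivalence furnished by Menger's Theorem.
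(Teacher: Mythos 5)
Your proof is correct and follows essentially the same route as the paper, which simply cites Theorem \ref{thm:menger} (Menger's Theorem) and leaves the translation between ``two internally disjoint $s$--$t$ paths'' and ``minimum $s$--$t$ separator of size at least $2$'' to the reader. You have merely spelled out that translation in full, and your remark about the degenerate case $V=\{s,t\}$ is a legitimate (harmless) caveat the paper glosses over.
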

\begin{proof}
The result is a direct consequence of Theorem \ref{thm:menger}. See \cite[Section 9.1]{schrijver:2003} for more on Menger's Theorem.
\end{proof}

\begin{proposition}\label{prop:equiv_to_i_in_W(j)}
Assume that $(\mc{C}, \mc{R})$ satisfies \eqref{eq:tree-like_1}. Let $j \in \mc{C}''$ and let $W(j)$ be as in \eqref{eq:def_of_W(j)}. Then
\begin{align}\label{eq:equiv_W(j)}
W(j)=\{i\in\mc{C}''~|~\mbox{there exist $P_j\in\orarr{i,j}$ and $P_{\mc{C}'}\in\orarr{i,\mc{C}'}$ such that $V[P_j]\cap V[P_{\mc{C}'}]=\{i\}$}\}.
\end{align}
\end{proposition}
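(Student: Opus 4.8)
The plan is to prove the two inclusions of \eqref{eq:equiv_W(j)} separately; write $W'(j)$ for the set on its right-hand side. Throughout I identify $\mc{C}'$ with its single element and use that, since $\mc{C}'$ is absorbing, every directed path ending at $j\in\mc{C}''$ stays inside $\mc{C}''$. The case $i=j$ is trivial in both directions ($i=j\in W'(j)$ via the one-point path, and $i=j\in W(j)$ since $\mc{I}_{U(j)}=\{j\}$), so I assume $i\neq j$ below. The fact I would use repeatedly is the reformulation \eqref{eq:IU_equiv}: for $U\in\mc{C}''_{j-\inarb}$ one has $i\in\mc{I}_U$ exactly when $i\in U$ and no \emph{proper dominator} of $i$ (a vertex $i'\neq i$ lying on every path in $\orarr{i,\mc{C}'}$, i.e. with $i\in U(i')$) belongs to $U$.

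For $W(j)\subseteq W'(j)$ I would invoke Menger's theorem in the form of Theorem \ref{thm:menger_2}. To reduce the two targets $j$ and $\mc{C}'$ to a single one, adjoin a new vertex $\omega$ together with the arcs $(j,\omega)$ and $(\mc{C}',\omega)$; since $i\neq j$ and $i\neq\mc{C}'$, the pair $(i,\omega)$ is not an arc. A short check shows that two paths $Q_1,Q_2\in\orarr{i,\omega}$ with $V[Q_1]\cap V[Q_2]=\{i,\omega\}$ correspond precisely to a pair $P_j\in\orarr{i,j}$, $P_{\mc{C}'}\in\orarr{i,\mc{C}'}$ with $V[P_j]\cap V[P_{\mc{C}'}]=\{i\}$ (the two paths must use the distinct last arcs $(j,\omega)$ and $(\mc{C}',\omega)$). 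So it suffices to verify the right-hand condition of Theorem \ref{thm:menger_2}: for every $v\in\mc{C}\bs\{i\}$ there is a path from $i$ to $\omega$ avoiding $v$. Fixing $U\in\mc{C}''_{j-\inarb}$ with $i\in\mc{I}_U$ and the path $P_j\subseteq U$ from $i$ to $j$ supplied by \eqref{eq:j-inarb_1}, I split into cases: if $v\notin U$ then $P_j$ extended by $(j,\omega)$ avoids $v$; if $v\in U$ then by the reformulation of $\mc{I}_U$ the vertex $v$ is not a dominator of $i$, so some path in $\orarr{i,\mc{C}'}$ avoids $v$, and extending it by $(\mc{C}',\omega)$ gives the required path. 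Theorem \ref{thm:menger_2} then produces the disjoint paths, i.e. $i\in W'(j)$.

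For $W'(j)\subseteq W(j)$ the key is to pick the correct set $U$. Given internally disjoint $P_j\in\orarr{i,j}$ and $P_{\mc{C}'}\in\orarr{i,\mc{C}'}$, I would set
\begin{align*}
U=\{v\in\mc{C}''~|~\mbox{every path in}~\orarr{v,\mc{C}'}~\mbox{meets}~V[P_j]\}
\end{align*}
and claim $U\in\mc{C}''_{j-\inarb}$ with $i\in\mc{I}_U$. Condition \eqref{eq:j-inarb_2} is immediate: if $v\notin U$ then some $R\in\orarr{v,\mc{C}'}$ avoids $V[P_j]$, and any vertex of $R$ lying in $U$ would have a tail of $R$ as a path to $\mc{C}'$ avoiding $V[P_j]$, contradicting its membership in $U$; hence $R$ avoids $U$ altogether. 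For \eqref{eq:j-inarb_1}, given $v\in U$ I take any $R\in\orarr{v,\mc{C}'}$, let $k$ be its first vertex in $V[P_j]$, and concatenate the part of $R$ before $k$ (which lies in $U$) with the part of $P_j$ from $k$ to $j$ (which lies in $V[P_j]\subseteq U$). Finally $i\in U$ is clear, and no proper dominator of $i$ lies in $U$: if $v\in U$ dominated $i$ then $v$ would lie on $P_{\mc{C}'}$, and the tail of $P_{\mc{C}'}$ from $v$, being a path to $\mc{C}'$ from a vertex of $U$, would have to meet $V[P_j]$ at a vertex, which by $V[P_j]\cap V[P_{\mc{C}'}]=\{i\}$ could only be $i$ — impossible, since that tail starts strictly after $i$. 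Thus $i\in\mc{I}_U$ by \eqref{eq:IU_equiv}, so $i\in W(j)$.

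The delicate point is the reverse inclusion, and within it the choice of $U$. My first instinct, $U=\bigcup_{k\in V[P_j]}U(k)$, makes $i\in\mc{I}_U$ easy but turns \eqref{eq:j-inarb_2} into an awkward claim about simultaneously avoiding a union of several dominated sets. Defining $U$ instead as the set of vertices all of whose paths to $\mc{C}'$ meet $V[P_j]$ is what makes both \eqref{eq:j-inarb_1} and \eqref{eq:j-inarb_2} fall out from the same one-line tail-of-a-path argument; the remaining subtlety is that proving $i\in\mc{I}_U$ is exactly where the internal disjointness $V[P_j]\cap V[P_{\mc{C}'}]=\{i\}$ — rather than the mere existence of the two paths — is used.
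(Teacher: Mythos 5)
Your proof is correct and follows essentially the same route as the paper's: the inclusion of $W(j)$ in the right-hand side of \eqref{eq:equiv_W(j)} is obtained in both cases by applying Theorem \ref{thm:menger_2} to the auxiliary graph with an extra sink adjacent from $j$ and from $\mc{C}'$, and the reverse inclusion by exhibiting a set $U\in\mc{C}''_{j-\inarb}$ with $i\in\mc{I}_U$ via \eqref{eq:IU_equiv}. The only substantive difference is your choice of witness $U=\{v\in\mc{C}''~|~\mbox{every path in }\orarr{v,\mc{C}'}\mbox{ meets }V[P_j]\}$, where the paper instead takes $U=\{i'\in\mc{C}''~|~\mbox{some path in }\orarr{i',j}\mbox{ meets }V[P_{\mc{C}'}]\mbox{ only possibly at }i\}$; both choices are valid witnesses, yours making \eqref{eq:j-inarb_2} immediate at the cost of a short argument for \eqref{eq:j-inarb_1}, and the paper's the other way around.
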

\begin{proof}
Denote by $Q(j)$ the set on the right hand side of \eqref{eq:equiv_W(j)}. We will show \eqref{eq:equiv_W(j)} by showing that both $W(j) \subseteq Q(j)$ and $W(j) \supseteq Q(j)$ hold.

First we prove that $W(j) \subseteq Q(j)$ holds. By \eqref{eq:def_of_W(j)} and \eqref{eq:IU_equiv}, we obtain for $i \in \mc{C}''$ that $i \in W(j)$ if and only if
\begin{align}\label{eq:i_in_W(j)_equiv}
\mbox{there exists $U \in \mc{C}''_{j-\inarb}$ such that $i \in U$ and for all $i' \in U \bs \{i\}$ we have $i \notin U(i')$.}
\end{align}
To obtain an equivalent description of $Q(j)$, we will apply Theorem \ref{thm:menger_2} for the directed graph
\begin{align*}
\wh{\mc{D}} = (\mc{C} \cup \{\mf{c}\}, \mc{R} \cup \{(j, \mf{c}), (\mc{C}', \mf{c})\}),
\end{align*}
where $\mf{c}$ is an auxiliary vertex. Thus, we have added the arcs $(j, \mf{c})$ and $(\mc{C}', \mf{c})$ to $\mc{R}$. Application of Theorem \ref{thm:menger_2} with $s = i \in \mc{C}'' \bs \{j\}$ and $t = \mf{c}$ yields that for $i \in \mc{C}'' \bs \{j\}$ that we have $i \in Q(j)$ if and only if
\begin{align}\label{eq:i_in_Q(j)_equiv}
\mbox{$\orarr{i,j} \neq \emptyset$ and for all $i' \in \mc{C}'' \bs \{i\}$ we have $i \notin U(i')$ or there exists $P \in \orarr{i,j}$ such that $i' \notin V[P]$,}
\end{align}
where the \lq\lq or'' is inclusive. Using \eqref{eq:j-inarb_1}, it is obvious that \eqref{eq:i_in_W(j)_equiv} implies \eqref{eq:i_in_Q(j)_equiv} for $i \in \mc{C}'' \bs \{j\}$. Thus, taking also into account that $j \in Q(j)$ holds obviously, we obtain the inclusion $W(j) \subseteq Q(j)$.

It is left to prove $W(j) \supseteq Q(j)$. Fix $i \in Q(j)$, $P_j \in \orarr{i,j}$, and $P_{\mc{C}'} \in \orarr{i,\mc{C}'}$ such that $V[P_j] \cap V[P_{\mc{C}'}] = \{i\}$. Let
\begin{align}
\label{eq:U} U = \{ i' \in \mc{C}'' ~|~ \mbox{there exists} ~ P \in \orarr{i',j} ~ \mbox{such that} ~ V[P] \cap V[P_{\mc{C}'}] \subseteq \{i\} \},
\end{align}
i.e., we collect those vertices, from which it is possible to reach $j$ without traversing $P_{\mc{C}'}$ except maybe in $i$. It is trivial that $U$ in \eqref{eq:U} satisfies \eqref{eq:j-inarb_1}. Also, it is easy to see that $U$ in \eqref{eq:U} fulfills \eqref{eq:j-inarb_2}. Thus, we have $U\in\mc{C}_{j-\inarb}''$. The only thing it is left to check is that $i \in \mc{I}_U$. Clearly, $i \in U$ and $(V[P_{\mc{C}'}] \bs \{i\}) \cap U = \emptyset$. Hence, there does not exist $i' \in U \bs \{i\}$ such that $i \in U(i')$. Thus, by \eqref{eq:IU_equiv}, we obtain that $i \in \mc{I}_U$. This concludes the proof of the inclusion $W(j) \supseteq Q(j)$.
\end{proof}

In case $j_1,j_2\in\mc{C}''$ are such that $W(j_1)\subseteq W(j_2)$, it is redundant in Corollary \ref{cor:forallkappa_hU(i)<=0} to require that there exists $i \in W(j_2)$ such that $h(U(i))<0$, because this already follows if we require the same for $j_1$ instead of $j_2$. In order to get rid of these kind of redundancies, we take a closer look at the collection $\{ W(j) ~ | ~ j \in \mc{C}'' \}$. The following lemma is the key.

\begin{lemma}\label{lemma:Wj1_subseteq_Wj2}
Assume that $(\mc{C}, \mc{R})$ satisfies \eqref{eq:tree-like_1}. For $j \in \mc{C}''$ let $W(j)$ be as in \eqref{eq:def_of_W(j)}. Fix $j_1, j_2 \in \mc{C}''$ such that $j_1 \in W(j_2)$. Then $W(j_1) \subseteq W(j_2)$.
\end{lemma}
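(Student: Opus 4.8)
The plan is to work throughout with the characterisation of $W(\cdot)$ supplied by \eqref{eq:equiv_W(j)}, namely that $i \in W(j)$ precisely when there exist $P_j \in \orarr{i,j}$ and $P_{\mc{C}'} \in \orarr{i,\mc{C}'}$ with $V[P_j] \cap V[P_{\mc{C}'}] = \{i\}$. It suffices to fix an arbitrary $i \in W(j_1)$ and show $i \in W(j_2)$. The coincidences $i = j_1$, $i = j_2$, and $j_1 = j_2$ are immediate (each makes the desired membership trivial or identical to the hypothesis, using that $j \in W(j)$ via the length-zero path at $j$), so I assume $i$, $j_1$, $j_2$ are pairwise distinct. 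From $i \in W(j_1)$ I obtain internally disjoint $C \in \orarr{i,j_1}$ and $D \in \orarr{i,\mc{C}'}$ with $V[C] \cap V[D] = \{i\}$, and from $j_1 \in W(j_2)$ internally disjoint $A \in \orarr{j_1,j_2}$ and $B \in \orarr{j_1,\mc{C}'}$ with $V[A] \cap V[B] = \{j_1\}$.

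Suppose for contradiction that $i \notin W(j_2)$. Since $\con(C,A)$ already witnesses $\orarr{i,j_2} \neq \emptyset$, I apply Theorem \ref{thm:menger_2} to the auxiliary graph $\wh{\mc{D}} = (\mc{C} \cup \{\mf{c}\}, \mc{R} \cup \{(j_2,\mf{c}),(\mc{C}',\mf{c})\})$ with $s = i$ and $t = \mf{c}$, exactly as in the proof of Proposition \ref{prop:equiv_to_i_in_W(j)} (note that $(i,\mf{c})$ is not an arc, as $i \in \mc{C}''$ and $i \neq j_2$). The failure of $i \in W(j_2)$ means there are no two internally disjoint $i$--$\mf{c}$ paths in $\wh{\mc{D}}$, so Theorem \ref{thm:menger_2} yields a vertex $v \in \mc{C} \setminus \{i\}$ lying on every $i$--$\mf{c}$ path; equivalently, $v$ lies on every $P \in \orarr{i,j_2}$ and on every $P \in \orarr{i,\mc{C}'}$. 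In particular $v \in V[D]$, and since $v \neq i$ while $V[C] \cap V[D] = \{i\}$, I get $v \notin V[C]$; as $j_1 \in V[C]$ this also forces $v \neq j_1$.

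It remains to contradict the universality of $v$ by exhibiting a single path that avoids it, and the key case distinction is whether $v$ lies on $A$. If $v \notin V[A]$, then every vertex of the walk $\con(C,A)$ lies in $V[C] \cup V[A]$, none of which equals $v$; extracting an $i$--$j_2$ path from this walk produces a member of $\orarr{i,j_2}$ avoiding $v$, contradicting that $v$ lies on all such paths. If instead $v \in V[A]$, then from $v \neq j_1$ and $V[A] \cap V[B] = \{j_1\}$ I conclude $v \notin V[B]$; now every vertex of $\con(C,B)$ lies in $V[C] \cup V[B]$, so extracting an $i$--$\mc{C}'$ path from it produces a member of $\orarr{i,\mc{C}'}$ avoiding $v$, again a contradiction. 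Either way the assumption $i \notin W(j_2)$ is untenable, so $i \in W(j_2)$; as $i \in W(j_1)$ was arbitrary, this gives $W(j_1) \subseteq W(j_2)$.

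The only genuine obstacle is the interference between the two halves of the constructed paths: the naive attempt to glue $C$ to $A$ (to reach $j_2$) while keeping $D$ (to reach $\mc{C}'$) can fail because $A$ may meet $D$. Routing the argument through Theorem \ref{thm:menger_2} converts this difficulty into the existence of a single separating vertex $v$, after which the split on $v \in V[A]$ lets me reuse the already-disjoint escape path $B$ in precisely the case where $A$ is obstructed; this is the step that makes the path surgery go through cleanly.
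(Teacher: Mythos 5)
Your proof is correct and takes essentially the same route as the paper: the same auxiliary graph $\wh{\mc{D}}$ with the extra vertex $\mf{c}$, the same appeal to Theorem \ref{thm:menger_2}, the same four witnessing paths (your $C,D,A,B$ are the paper's $Q_{j_1},Q_{\mc{C}'},P_{j_2},P_{\mc{C}'}$), and the same case split on whether the obstructing vertex lies on $A$. The only cosmetic difference is that you argue by contradiction via a single separating vertex, whereas the paper directly exhibits, for each candidate vertex, a $j_3$--$\mf{c}$ walk avoiding it.
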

\begin{proof}
Let $j_3 \in W(j_1)$. Our aim is to show that $j_3$ is also an element of $W(j_2)$. If $j_3 = j_2$ then $j_3 \in W(j_2)$ trivially holds, so let us assume for the rest of this proof that $j_3 \neq j_2$. Similarly to the proof of Proposition \ref{prop:equiv_to_i_in_W(j)}, we will apply Theorem \ref{thm:menger_2} to the directed graph
\begin{align*}
\wh{\mc{D}} = (\mc{C} \cup \{\mf{c}\}, \mc{R} \cup \{(j_2, \mf{c}), (\mc{C}', \mf{c})\}),
\end{align*}
where $\mf{c}$ is an auxiliary vertex. Let
\begin{align*}
&P_{j_2} \in \orarr{j_1, j_2} ~ \mbox{and} ~ P_{\mc{C}'} \in \orarr{j_1, \mc{C}'} ~ \mbox{be such that} ~ V[P_{j_2}] \cap V[P_{\mc{C}'}] = \{j_1\} ~ \mbox{and} \\
&Q_{j_1} \in \orarr{j_3, j_1} ~ \mbox{and} ~ Q_{\mc{C}'} \in \orarr{j_3, \mc{C}'} ~ \mbox{be such that} ~ V[Q_{j_1}] \cap V[Q_{\mc{C}'}] = \{j_3\}.
\end{align*}
Clearly, once we show that for all $i \in \mc{C} \bs \{j_3\}$ there exists a directed path from $j_3$ to $\mf{c}$ in $\wh{\mc{D}}$ that does not traverse $i$, we can draw the conclusion $j_3 \in W(j_2)$ by Theorem \ref{thm:menger_2}.

First let $i \in \mc{C} \bs V[Q_{\mc{C}'}]$. Then $\con(Q_{\mc{C}'}, \mf{c})$ is a directed path from $j_3$ to $\mf{c}$ in $\wh{\mc{D}}$ that does not traverse $i$.

It is left to treat the case $i \in V[Q_{\mc{C}'}] \bs \{j_3\}$. Then $i \notin V[P_{j_2}]$ or $i \notin V[P_{\mc{C}'}]$, where the \lq\lq or'' is inclusive. If $i \notin V[P_{j_2}]$ then $\con(Q_{j_1},P_{j_2},\mf{c})$ is a directed walk from $j_3$ to $\mf{c}$ in $\wh{\mc{D}}$ that does not traverse $i$. If $i \notin V[P_{\mc{C}'}]$ then $\con(Q_{j_1},P_{\mc{C}'},\mf{c})$ is a directed walk from $j_3$ to $\mf{c}$ in $\wh{\mc{D}}$ that does not traverse $i$. In both cases, one may easily construct the desired directed path from the directed walk.
\end{proof}

For $j \in \mc{C}''$, denote by $\mc{C}''(j)$ the vertex set of that strong component of $(\mc{C},\mc{R})$, which contains $j$. Thus, for \eqref{eq:xypic_U} we have
\begin{align*}
\mc{C}''(2) = \{2\}, \mc{C}''(3) = \mc{C}''(4) = \{3, 4\}, \mc{C}''(5) = \mc{C}''(6) = \{5, 6\}, ~ \mbox{and} ~ \mc{C}''(7) = \mc{C}''(8) = \{7, 8\}.
\end{align*}
For $j \in \mc{C}''$ we say that it is possible to leave $\mc{C}''(j)$ through $j$ if $\varrho^{\outarc}(j) \cap \varrho^{\outarc}(\mc{C}''(j))) \neq \emptyset$. For \eqref{eq:xypic_U}, this property holds with $j \in \{2, 3, 5, 6, 7\}$.

\begin{corollary}\label{cor:j'_in_C''(j)_then_j'_in_W(j)}
Assume that $(\mc{C}, \mc{R})$ satisfies \eqref{eq:tree-like_1}. For $j \in \mc{C}''$ let $W(j)$ be as in \eqref{eq:def_of_W(j)}. Let $j \in \mc{C}''$ be such that \lq\lq it is possible to leave $\mc{C}''(j)$ through $j$'' (i.e., $\varrho^{\outarc}(j) \cap \varrho^{\outarc}(\mc{C}''(j))) \neq \emptyset$). Then for all $j' \in \mc{C}''(j)$ we have $W(j) \subseteq W(j')$.
\end{corollary}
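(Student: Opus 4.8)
The plan is to reduce the claim to the single membership statement $j \in W(j')$ for each $j' \in \mc{C}''(j)$, and then let Lemma \ref{lemma:Wj1_subseteq_Wj2} do the rest: applying that lemma with $j_1 = j$ and $j_2 = j'$, the hypothesis $j \in W(j')$ immediately yields $W(j) \subseteq W(j')$, which is exactly the assertion. So the entire burden of the proof is to establish $j \in W(j')$ for an arbitrary $j' \in \mc{C}''(j)$.

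To verify $j \in W(j')$ I would invoke the characterisation of Proposition \ref{prop:equiv_to_i_in_W(j)}: it suffices to exhibit paths $P_{j'} \in \orarr{j, j'}$ and $P_{\mc{C}'} \in \orarr{j, \mc{C}'}$ with $V[P_{j'}] \cap V[P_{\mc{C}'}] = \{j\}$. For $P_{j'}$ I use that $j$ and $j'$ lie in the common strong component $\mc{C}''(j)$, whose induced subgraph is strongly connected, so there is a directed path from $j$ to $j'$ with $V[P_{j'}] \subseteq \mc{C}''(j)$. For $P_{\mc{C}'}$ I use the standing hypothesis that it is possible to leave $\mc{C}''(j)$ through $j$: fix an arc $(j,k) \in \varrho^{\outarc}(j) \cap \varrho^{\outarc}(\mc{C}''(j))$, so that $k \notin \mc{C}''(j)$, and take any directed path from $k$ to $\mc{C}'$ (which exists because $\ell = t = 1$). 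Concatenating the arc $(j,k)$ with this path produces the desired $P_{\mc{C}'} \in \orarr{j, \mc{C}'}$.

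The crux is to check that $V[P_{\mc{C}'}] \cap \mc{C}''(j) = \{j\}$. This is the standard fact that a directed path leaving a strong component cannot re-enter it: if $P_{\mc{C}'}$ visited some $m \in \mc{C}''(j)$ with $m \neq j$ after passing through $k$, then the arc $(j,k)$, the segment of $P_{\mc{C}'}$ from $k$ to $m$, and a path from $m$ back to $j$ inside $\mc{C}''(j)$ would form a closed directed walk through $k$, forcing $k \in \mc{C}''(j)$ and contradicting $k \notin \mc{C}''(j)$; since $P_{\mc{C}'}$ is vertex-simple it also never revisits $j$. Hence $P_{\mc{C}'}$ meets $\mc{C}''(j)$ only at its starting vertex $j$. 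Combined with $V[P_{j'}] \subseteq \mc{C}''(j)$ this gives $V[P_{j'}] \cap V[P_{\mc{C}'}] \subseteq \{j\}$, and as $j$ is the common starting vertex the intersection is exactly $\{j\}$, so $j \in W(j')$ by Proposition \ref{prop:equiv_to_i_in_W(j)}.

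The main obstacle is not any single computation but making the no-re-entry argument airtight; everything else is routine. In particular one must confirm that the chosen path from $k$ to $\mc{C}'$ avoids all of $\mc{C}''(j)$ rather than merely the vertex $j$ (which is precisely what the no-re-entry fact provides, via the same closed-walk contradiction), and that the concatenation $\con((j,k), \cdot)$ is a genuine vertex-simple directed path, so that the characterisation of $W(j')$ applies without modification.
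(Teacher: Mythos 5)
Your proposal is correct and follows essentially the same route as the paper: establish $j \in W(j')$ via the path characterisation of Proposition \ref{prop:equiv_to_i_in_W(j)} (a path from $j$ to $j'$ inside $\mc{C}''(j)$, and a path from $j$ to $\mc{C}'$ leaving $\mc{C}''(j)$ immediately and never re-entering it), then conclude with Lemma \ref{lemma:Wj1_subseteq_Wj2}. You merely spell out the no-re-entry argument that the paper leaves implicit.
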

\begin{proof}
Fix $j' \in \mc{C}''(j)$. There exists a directed path from $j$ to $j'$ which uses only vertices in $\mc{C}''(j)$. On the other hand, since $\varrho^{\outarc}(j) \cap \varrho^{\outarc}(\mc{C}''(j)) \neq \emptyset$, it is possible to reach $\mc{C}'$ from $j$ using only vertices from $\mc{C} \bs (\mc{C}''(j) \bs \{j\})$. Hence, by Proposition \ref{prop:equiv_to_i_in_W(j)}, we have $j \in W(j')$. Lemma \ref{lemma:Wj1_subseteq_Wj2} concludes the proof.
\end{proof}

It is clear from the above corollary that if $j_1,j_2 \in \mc{C}''$ are such that $\mc{C}''(j_1)=\mc{C}''(j_2)$, $\varrho^{\outarc}(j_1) \cap \varrho^{\outarc}(\mc{C}''(j_1)) \neq \emptyset$, and $\varrho^{\outarc}(j_2) \cap \varrho^{\outarc}(\mc{C}''(j_2))) \neq \emptyset$ then $W(j_1) = W(j_2)$. For the reaction network \eqref{eq:xypic_U} we indeed have $W(5) = W(6)$.

Let $\mc{J}$ be a subset of $\mc{C}''$ for which
\begin{align}
\begin{split}\label{eq:J}
&\mbox{$\mc{J}$ contains precisely one element of each non-absorbing strong component of $(\mc{C},\mc{R})$ and} \\
&\mbox{for all $j \in \mc{J}$ we have $\varrho^{\outarc}(j) \cap \varrho^{\outarc}(\mc{C}''(j)) \neq \emptyset$.}
\end{split}
\end{align}

For \eqref{eq:xypic_U} we have two choices for $\mc{J}$. One is $\{2,3,5,7\}$, while the other one is $\{2,3,6,7\}$. To be concrete, let $\mc{J} = \{2,3,5,7\}$. Due to Corollary \ref{cor:j'_in_C''(j)_then_j'_in_W(j)}, we have
\begin{align*}
W(3) \subseteq W(4), W(5) \subseteq W(6), ~ \mbox{and} ~ W(7) \subseteq W(8),
\end{align*}
which is indeed the case (see \eqref{eq:W(j)'s_for_xypic_U}).

We have thus obtained the following corollary.
\begin{corollary}\label{cor:mcJ}
Let $(\mc{X},\mc{C},\mc{R})$ be a reaction network for which $\ell = t = 1$ and $\delta = 1$. Assume that $(\mc{C},\mc{R})$ is not strongly connected and let $h \in \R^c$ be as in \eqref{eq:fix_h}. For $i, j \in \mc{C}''$ let $U(i)$ and $W(j)$ be as in \eqref{eq:def_of_U(i)} and \eqref{eq:def_of_W(j)}, respectively. Also, let $\mc{J}$ be as in \eqref{eq:J}. Then for all $\kappa:\mc{R}\to\RPL$ we have $E^{\kappa}_+\neq\emptyset$ if and only if
\begin{align}
\begin{cases}
\mbox{for all $i \in \mc{C}''$ we have $h(U(i))\leq0$ and} \\
\mbox{for all $j \in \mc{J}$ there exists $i \in W(j)$ such that $h(U(i)) < 0$.} \\
\end{cases}
\end{align}
\end{corollary}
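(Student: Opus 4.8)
The plan is to deduce this directly from Corollary \ref{cor:forallkappa_hU(i)<=0}, of which it is a streamlined version. The two characterisations have an identical first clause, namely $h(U(i)) \leq 0$ for all $i \in \mc{C}''$, and differ only in the second (existence) clause: Corollary \ref{cor:forallkappa_hU(i)<=0} requires that for every $j \in \mc{C}''$ there be some $i \in W(j)$ with $h(U(i)) < 0$, whereas here the same is required only for $j \in \mc{J}$. Thus I would reduce the whole statement to proving that these two existence clauses are equivalent, after which the corollary follows by substituting one for the other in Corollary \ref{cor:forallkappa_hU(i)<=0}.

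One direction is trivial: since $\mc{J} \subseteq \mc{C}''$, requiring the existence clause for every $j \in \mc{C}''$ immediately gives it for every $j \in \mc{J}$. For the converse I would fix an arbitrary $j \in \mc{C}''$ and pass to the representative $j_0 \in \mc{J}$ of its strong component; by the first condition in \eqref{eq:J} exactly one such $j_0$ exists, and $\mc{C}''(j_0) = \mc{C}''(j)$. The second condition in \eqref{eq:J} says that it is possible to leave $\mc{C}''(j_0)$ through $j_0$, so Corollary \ref{cor:j'_in_C''(j)_then_j'_in_W(j)} applies to $j_0$ and gives $W(j_0) \subseteq W(j')$ for all $j' \in \mc{C}''(j_0)$; taking $j' = j$ yields $W(j_0) \subseteq W(j)$. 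By hypothesis there is some $i \in W(j_0)$ with $h(U(i)) < 0$, and this same $i$ then lies in $W(j)$, establishing the existence clause for $j$. As $j \in \mc{C}''$ was arbitrary, the converse holds.

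Since the monotonicity of the family $\{W(j) ~|~ j \in \mc{C}''\}$ within each strong component (Corollary \ref{cor:j'_in_C''(j)_then_j'_in_W(j)}) and the defining properties of $\mc{J}$ are already available, I expect no genuine obstacle here: the argument is essentially the bookkeeping step that replaces each index $j$ by the distinguished \lq\lq leaving'' representative $j_0$ of its strong component. The one point demanding care is the direction of the inclusion supplied by Corollary \ref{cor:j'_in_C''(j)_then_j'_in_W(j)}: it must be invoked with $j_0$ (the vertex through which one may leave $\mc{C}''(j_0)$) in the role of its hypothesised vertex and with $j$ in the role of the arbitrary $j'$, so that one obtains $W(j_0) \subseteq W(j)$ and not the reverse inclusion, which would be of no use here.
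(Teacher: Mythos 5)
Your proposal is correct and follows the same route as the paper, which likewise deduces the statement from Corollary \ref{cor:forallkappa_hU(i)<=0} (whose second clause equals yours via \eqref{eq:def_of_W(j)}) together with the inclusion $W(j_0)\subseteq W(j)$ supplied by Corollary \ref{cor:j'_in_C''(j)_then_j'_in_W(j)}; you merely spell out the bookkeeping that the paper leaves implicit. Your caution about the direction of that inclusion is well placed and you apply it correctly.
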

\begin{proof}
The equivalence follows directly from \eqref{eq:def_of_W(j)} and  Corollaries \ref{cor:forallkappa_hU(i)<=0} and \ref{cor:j'_in_C''(j)_then_j'_in_W(j)}.
\end{proof}

Since for \eqref{eq:xypic_U} we have made the choice $\mc{J} = \{2,3,5,7\}$, Corollary \ref{cor:mcJ} suggests that the sets of importance are $W(2)$, $W(3)$, $W(5)$, and $W(7)$. Recall that
\begin{align}\label{eq:W(j)_for_j_in_mc{J}}
W(2) = \{2\}, W(3) = \{3, 7\}, W(5) = \{5, 6, 7\}, ~ \mbox{and} ~ W(7) = \{7\}.
\end{align}
As $7 \in W(3)$ and $7 \in W(5)$, by Lemma \ref{lemma:Wj1_subseteq_Wj2} we have $W(7) \subseteq W(3)$ and $W(7) \subseteq W(5)$ (which is anyway obvious from \eqref{eq:W(j)_for_j_in_mc{J}}). Thus, still there is redundancy in Corollary \ref{cor:mcJ}. Elimination of this redundancy is formulated in the following corollary.

\begin{corollary}\label{cor:equiv_without_redundancy}
Let $(\mc{X},\mc{C},\mc{R})$ be a reaction network for which $\ell = t = 1$ and $\delta = 1$. Assume that $(\mc{C},\mc{R})$ is not strongly connected and let $h \in \R^c$ be as in \eqref{eq:fix_h}. For $i, j \in \mc{C}''$ let $U(i)$ and $W(j)$ be as in \eqref{eq:def_of_U(i)} and \eqref{eq:def_of_W(j)}, respectively. Also, let $\mc{J}$ be as in \eqref{eq:J}. Then for all $\kappa:\mc{R}\to\RPL$ we have $E^{\kappa}_+\neq\emptyset$ if and only if
\begin{align}\label{eq:equiv_EPL_nonempty_3}
\begin{cases}
\mbox{for all $i \in \mc{C}''$ we have $h(U(i))\leq0$ and} \\
\mbox{for all $j \in \mc{J}$ such that $W(j) \subseteq \mc{C}''(j)$, there exists $i \in W(j)$ such that $h(U(i)) < 0$.} \\
\end{cases}
\end{align}
\end{corollary}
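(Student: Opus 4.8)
The plan is to start from Corollary~\ref{cor:mcJ} and show that condition \eqref{eq:equiv_EPL_nonempty_3} is equivalent to the one appearing there. The first requirement (that $h(U(i))\leq 0$ for all $i\in\mc{C}''$) is identical in the two statements, so only the second requirement needs attention; that is, I must prove that \lq\lq for all $j\in\mc{J}$ there exists $i\in W(j)$ with $h(U(i))<0$'' is equivalent to the same assertion restricted to those $j\in\mc{J}$ with $W(j)\subseteq\mc{C}''(j)$. The implication from the unrestricted to the restricted condition is immediate, since the latter asks for strictly fewer inequalities. All the content therefore lies in the reverse implication.

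For the reverse implication, the key combinatorial claim I would isolate and prove first is: for every $j\in\mc{J}$ there exists $j^\ast\in\mc{J}$ with $W(j^\ast)\subseteq W(j)$ and $W(j^\ast)\subseteq\mc{C}''(j^\ast)$. Granting this, the argument closes quickly. Given the restricted condition and an arbitrary $j\in\mc{J}$, I apply the claim to obtain $j^\ast$, observe that $j^\ast$ is one of the indices to which the restricted condition applies (precisely because $W(j^\ast)\subseteq\mc{C}''(j^\ast)$), and thereby find $i\in W(j^\ast)\subseteq W(j)$ with $h(U(i))<0$, which is exactly what the unrestricted condition demands for $j$.

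To prove the claim I would use a descent (minimality) argument. Among all $j'\in\mc{J}$ with $W(j')\subseteq W(j)$ --- a nonempty collection, since it contains $j$ itself --- choose $j^\ast$ with $|W(j^\ast)|$ minimal, and suppose for contradiction that $W(j^\ast)\not\subseteq\mc{C}''(j^\ast)$. Then there is $j'\in W(j^\ast)\bs\mc{C}''(j^\ast)$; letting $\hat{j}\in\mc{J}$ be the unique representative of the (non-absorbing) strong component $\mc{C}''(j')$, Corollary~\ref{cor:j'_in_C''(j)_then_j'_in_W(j)} --- applicable because $\hat{j}\in\mc{J}$ guarantees $\varrho^{\outarc}(\hat{j})\cap\varrho^{\outarc}(\mc{C}''(\hat{j}))\neq\emptyset$ --- yields $W(\hat{j})\subseteq W(j')$, while Lemma~\ref{lemma:Wj1_subseteq_Wj2} applied to $j'\in W(j^\ast)$ yields $W(j')\subseteq W(j^\ast)$. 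Hence $W(\hat{j})\subseteq W(j^\ast)\subseteq W(j)$, so $\hat{j}$ is a candidate in the same minimization.

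The main obstacle, and the step I would verify most carefully, is the \emph{strictness} $W(\hat{j})\subsetneq W(j^\ast)$ needed to contradict minimality. Here I would invoke Proposition~\ref{prop:equiv_to_i_in_W(j)}: every element of $W(j^\ast)$ can reach $j^\ast$, so from $\hat{j}\in W(\hat{j})\subseteq W(j^\ast)$ it follows that $\hat{j}$ reaches $j^\ast$; were $j^\ast$ also in $W(\hat{j})$, then $j^\ast$ would reach $\hat{j}$, forcing $j^\ast$ and $\hat{j}$ into a common strong component --- impossible, since $\hat{j}\in\mc{C}''(j')$ while $j'\notin\mc{C}''(j^\ast)$. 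Thus $j^\ast\in W(j^\ast)\bs W(\hat{j})$, the inclusion is strict, and the resulting contradiction establishes the claim, completing the proof of the equivalence via Corollary~\ref{cor:mcJ}.
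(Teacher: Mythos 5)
Your proof is correct and follows essentially the same route as the paper: reduce, via Lemma \ref{lemma:Wj1_subseteq_Wj2} and Corollary \ref{cor:j'_in_C''(j)_then_j'_in_W(j)}, from an arbitrary $j\in\mc{J}$ to some $j^\ast\in\mc{J}$ with $W(j^\ast)\subseteq W(j)$ and $W(j^\ast)\subseteq\mc{C}''(j^\ast)$, then invoke Corollary \ref{cor:mcJ}. The only difference is that the paper performs a single reduction step and leaves the iteration implicit, whereas you make the descent explicit through a minimality choice together with a correct strictness verification (via Proposition \ref{prop:equiv_to_i_in_W(j)} and the fact that $j^\ast$ and $\hat{j}$ lie in distinct strong components), which is a welcome tightening rather than a deviation.
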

\begin{proof}
If $W(j) \nsubseteq \mc{C}''(j)$ for some $j \in \mc{J}$ then for $j' \in W(j) \bs \mc{C}''(j)$ we have $W(j') \subseteq W(j)$ (see Lemma \ref{lemma:Wj1_subseteq_Wj2}). Denote by $j''$ the sole element of the singleton $\mc{J} \cap \mc{C}''(j')$. Then we have $W(j'') \subseteq W(j') \subseteq W(j)$ (see Corollary \ref{cor:j'_in_C''(j)_then_j'_in_W(j)}). Hence, the statement \lq\lq there exists $i \in W(j'')$ such that $h(U(i))<0$'' implies that \lq\lq there exists $i \in W(j)$ such that $h(U(i))<0$''. Thus, the result follows from Corollary \ref{cor:mcJ}.
\end{proof}

For \eqref{eq:xypic_U} we have
\begin{align}\label{eq:W_(n)subseteq_C''}
\begin{split}
W(2) = \{2\}     &\subseteq  \{2\}   = \mc{C}''(2), \\
W(3) = \{3,7\}   &\nsubseteq \{3,4\} = \mc{C}''(3), \\
W(5) = \{5,6,7\} &\nsubseteq \{5,6\} = \mc{C}''(5), ~ \mbox{and} ~ \\
W(7) = \{7\}     &\subseteq  \{7,8\} = \mc{C}''(7). \\
\end{split}
\end{align}

To simplify further the condition \eqref{eq:equiv_EPL_nonempty_3}, we examine in the following proposition the collection $\{ U(i) ~|~ i \in W(j) \}$, where $j \in \mc{C}''$ is such that $\varrho^{\outarc}(j) \cap \varrho^{\outarc}(\mc{C}''(j))) \neq \emptyset$ and $W(j) \subseteq \mc{C}''(j)$.

\begin{lemma}\label{lemma:U(C''(j))}
Assume that $(\mc{C}, \mc{R})$ satisfies \eqref{eq:tree-like_1}. For $i, j \in \mc{C}''$ let $U(i)$ and $W(j)$ be as in \eqref{eq:def_of_U(i)} and \eqref{eq:def_of_W(j)}, respectively. Fix $j \in \mc{C}''$ such that $\varrho^{\outarc}(j) \cap \varrho^{\outarc}(\mc{C}''(j)) \neq \emptyset$ and $W(j) \subseteq \mc{C}''(j)$. Then the sets $\{ U(i) ~|~ i \in W(j) \}$ are disjoint and $\cup_{i \in W(j)}^* U(i) = U(\mc{C}''(j))$, where
\begin{align}\label{eq:def_of_U(C''(j))}
U(\mc{C}''(j))=\{ k \in \mc{C}'' ~|~ \mbox{all directed paths from $k$ to $\mc{C}'$ traverse $\mc{C}''(j)$}\}.
\end{align}
\end{lemma}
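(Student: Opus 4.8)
The plan is to prove the two assertions in turn: first the pairwise disjointness of $\{U(i)\mid i\in W(j)\}$, and then the equality $\bigcup_{i\in W(j)}U(i)=U(\mc{C}''(j))$. For \textbf{disjointness}, fix distinct $i,i'\in W(j)$ and suppose $U(i)\cap U(i')\neq\emptyset$. By Lemma \ref{lemma:U(i)_abcd} $(c)$ I may assume $U(i)\subseteq U(i')$, so that $i\in U(i')$ (recall $i\in U(i)$ by Lemma \ref{lemma:U(i)_abcd} $(a)$). Since $i\in W(j)$, Proposition \ref{prop:equiv_to_i_in_W(j)} supplies $P_j\in\orarr{i,j}$ and $P_{\mc{C}'}\in\orarr{i,\mc{C}'}$ with $V[P_j]\cap V[P_{\mc{C}'}]=\{i\}$; as $i\in U(i')$, the path $P_{\mc{C}'}$ must traverse $i'$, whence $i'\in V[P_{\mc{C}'}]\setminus\{i\}$ and therefore $i'\notin V[P_j]$, and in particular $i'\neq j$. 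Here the two hypotheses enter: since $W(j)\subseteq\mc{C}''(j)$ we have $i'\in\mc{C}''(j)\setminus\{j\}$, and since $\varrho^{\outarc}(j)\cap\varrho^{\outarc}(\mc{C}''(j))\neq\emptyset$ there is a directed path $R\in\orarr{j,\mc{C}'}$ whose first arc leaves $\mc{C}''(j)$; because the condensation of $(\mc{C},\mc{R})$ is acyclic, $R$ never returns to $\mc{C}''(j)$, so $R$ avoids $\mc{C}''(j)\setminus\{j\}\ni i'$. Concatenating $P_j$ with $R$ produces a directed walk from $i$ to $\mc{C}'$ avoiding $i'$; extracting a directed path from it contradicts $i\in U(i')$. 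Hence the sets $\{U(i)\mid i\in W(j)\}$ are pairwise disjoint.

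For the equality, the inclusion $\bigcup_{i\in W(j)}U(i)\subseteq U(\mc{C}''(j))$ is immediate: if $k\in U(i)$ with $i\in W(j)\subseteq\mc{C}''(j)$, then every directed path from $k$ to $\mc{C}'$ traverses $i\in\mc{C}''(j)$, so $k\in U(\mc{C}''(j))$. For the reverse inclusion I would fix $k\in U(\mc{C}''(j))$ and reduce matters to the \emph{common-vertex subclaim}: some vertex of $\mc{C}''(j)$ lies on every directed path from $k$ to $\mc{C}'$ (this is trivial, with witness $k$, whenever $k\in\mc{C}''(j)$). Granting the subclaim, choose $i^\ast\in\mc{C}''(j)$ lying on all $k$--$\mc{C}'$ paths (so $k\in U(i^\ast)$) with $U(i^\ast)$ maximal among $\{U(v)\mid v\in\mc{C}''(j),\ k\in U(v)\}$; these sets all contain $k$, so by Lemma \ref{lemma:U(i)_abcd} $(c)$ they form a chain and $i^\ast$ is well defined. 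It remains to show $i^\ast\in W(j)$. If $i^\ast=j$ this follows from the exit hypothesis (the trivial path at $j$ and $R$ are internally disjoint). Otherwise I apply Theorem \ref{thm:menger_2} in $\wh{\mc{D}}=(\mc{C}\cup\{\mf{c}\},\mc{R}\cup\{(j,\mf{c}),(\mc{C}',\mf{c})\})$ with $s=i^\ast$ and $t=\mf{c}$, exactly as in the proof of Proposition \ref{prop:equiv_to_i_in_W(j)}: for $u\notin\mc{C}''(j)$ a directed path from $i^\ast$ to $j$ inside $\mc{C}''(j)$ followed by $(j,\mf{c})$ avoids $u$, while for $u\in\mc{C}''(j)\setminus\{i^\ast\}$ the nonexistence of an $i^\ast$--$\mc{C}'$ path avoiding $u$ would mean $i^\ast\in U(u)$, forcing $U(i^\ast)\subsetneq U(u)$ by Lemma \ref{lemma:U(i)_abcd} $(b)$ and contradicting maximality. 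Thus two internally disjoint $i^\ast$--$\mf{c}$ paths exist, i.e.\ $i^\ast\in W(j)$, and $k\in U(i^\ast)$ completes the inclusion $\supseteq$.

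The main obstacle is the common-vertex subclaim for $k\notin\mc{C}''(j)$, and this is exactly where $W(j)\subseteq\mc{C}''(j)$ is indispensable (without it $k$ may reach $\mc{C}'$ through two different vertices of $\mc{C}''(j)$ by internally disjoint routes). I would argue by contradiction: assuming no vertex of $\mc{C}''(j)$ lies on all $k$--$\mc{C}'$ paths, I run the Menger criterion (Theorem \ref{thm:menger_2}) testing $k\in W(j)$; a failure yields a single vertex $u$ lying on every $k$--$j$ and every $k$--$\mc{C}'$ path. The case $u\in\mc{C}''(j)$ is excluded by assumption, so $u\notin\mc{C}''(j)$ and $u$ lies strictly upstream of $\mc{C}''(j)$. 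A short argument then shows that $u$ again belongs to $U(\mc{C}''(j))$ and still has no vertex of $\mc{C}''(j)$ common to all of its paths to $\mc{C}'$, so the bad configuration propagates to $u$; choosing, among all such bad vertices, one of minimal distance to $\mc{C}''(j)$ and applying this step yields a still-closer bad vertex, a contradiction. Hence no such $u$ exists, the Menger criterion succeeds, and $k\in W(j)$ --- impossible since $k\notin\mc{C}''(j)\supseteq W(j)$. This proves the subclaim and, with the previous paragraphs, the lemma. The delicate points to verify carefully are the propagation step (that $u\in U(\mc{C}''(j))$ and that the no-common-vertex property persists for $u$) and the termination of the descent.
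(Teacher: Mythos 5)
Your proof of the disjointness of $\{U(i)~|~i\in W(j)\}$ is essentially the paper's own argument: reduce via Lemma \ref{lemma:U(i)_abcd} $(c)$ to a strict containment, take the Menger-type witness paths $P_j,P_{\mc{C}'}$ of Proposition \ref{prop:equiv_to_i_in_W(j)} at the inner vertex, observe that the outer vertex must sit on $P_{\mc{C}'}$ and hence not on $P_j$, and then bypass it by concatenating $P_j$ with an exit path from $j$ that stays off $\mc{C}''(j)\bs\{j\}$. For the equality $\bigcup_{i\in W(j)}U(i)=U(\mc{C}''(j))$, however, you take a genuinely different route. The paper's proof is a three-line sandwich: $U(\mc{C}''(j))\in\mc{C}''_{j-\inarb}$, so by Lemma \ref{lemma:U_partition} $(b)$ it equals the disjoint union of $U(i)$ over $i\in\mc{I}_{U(\mc{C}''(j))}$, and $\mc{I}_{U(\mc{C}''(j))}\subseteq W(j)$ holds by the very definition \eqref{eq:def_of_W(j)}, while $U(i)\subseteq U(\mc{C}''(j))$ for each $i\in W(j)\subseteq\mc{C}''(j)$; all the real work was done earlier. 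You instead re-derive the needed special case from scratch: a common-vertex claim proved by a Menger test plus a descent, followed by a maximality argument producing $i^\ast\in W(j)$ with $k\in U(i^\ast)$. I checked the two steps you flag as delicate and they do go through: a cut vertex $u$ from a failed Menger test lies on some $k$--$j$ path and is therefore strictly upstream of $\mc{C}''(j)$; any $u$--$\mc{C}'$ path avoiding $\mc{C}''(j)$ would, prefixed by a $k$--$u$ path, yield a $k$--$\mc{C}'$ path whose only possible meeting with $\mc{C}''(j)$ occurs before $u$, contradicting upstreamness, so $u\in U(\mc{C}''(j))$; the no-common-vertex property passes from $k$ to $u$ because every $k$--$\mc{C}'$ path factors through $u$; and $u$ is an internal vertex of a shortest $k$--$\mc{C}''(j)$ path extended inside $\mc{C}''(j)$ to $j$, so the descent strictly decreases the distance to $\mc{C}''(j)$ and terminates at a vertex where the Menger test succeeds, contradicting $W(j)\subseteq\mc{C}''(j)$. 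So your proof is correct; it is self-contained with respect to the $\mc{I}_U$ partition machinery, but at the cost of being substantially longer and of silently reproving, for the single set $U(\mc{C}''(j))$, what Lemma \ref{lemma:U_partition} $(b)$ together with \eqref{eq:def_of_W(j)} already provides in general.
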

\begin{proof}
First we prove that the sets $\{ U(i) ~|~ i \in W(j) \}$ are disjoint. Let $i_1, i_2 \in W(j)$ be such that $i_1 \neq i_2$. Due to Lemma \ref{lemma:U(i)_abcd} $(c)$, it suffices to show that none of $U(i_1)$ and $U(i_2)$ contains the other one. Suppose by contradiction that $U(i_2) \subseteq U(i_1) \bs \{i_1\}$. Let $P_j \in \orarr{i_2,j}$ and $P_{\mc{C}'} \in \orarr{i_2,\mc{C}'}$ be such that $V[P_j] \cap V[P_{\mc{C}'}] = \{i_2\}$ (see Proposition \ref{prop:equiv_to_i_in_W(j)}). Since $i_2 \in U(i_1)$ by our hypothesis ($i_2 \in U(i_2)$ by Lemma \ref{lemma:U(i)_abcd} $(a)$), we have $i_1 \in V[P_{\mc{C}'}]$. Since $i_1 \neq i_2$, we have $i_1 \notin V[P_j]$. Let $P \in \orarr{j,\mc{C}'}$ be such that $V[P] \cap \mc{C}''(j) = \{j\}$ (recall that $\varrho^{\outarc}(j) \cap \varrho^{\outarc}(\mc{C}''(j))) \neq \emptyset$). Then clearly $i_1 \notin V[P]$ (recall that $i_1 \in W(j) \subseteq \mc{C}''(j)$ and the case $i_1 = j$ can trivially be excluded). Thus, $\con(P_j,P) \in \orarr{i_2,\mc{C}'}$ and $i_1 \notin V[\con(P_j,P)]$, contradicting $i_2 \in U(i_1)$. This contradiction proves that the sets $\{ U(i) ~|~ i \in W(j) \}$ are indeed disjoint.

It is left to prove that $\cup_{i \in W(j)}^* U(i) = U(\mc{C}''(j))$. It is obvious that $U(\mc{C}''(j)) \in \mc{C}''_{j-\inarb}$ (one may prove this similarly to the proof Lemma \ref{lemma:U_partition} $(a)$). Hence, we have $\mc{I}_{U(\mc{C}''(j))} \subseteq W(j)$ (see \eqref{eq:def_of_W(j)}). Also, note that for $i \in W(j)$ we have $i \in \mc{C}''(j)$ (recall that have we assumed in the lemma that $W(j) \subseteq \mc{C}''(j)$). Thus, for $i \in W(j)$ we obviously have $U(i) \subseteq U(\mc{C}''(j))$. Therefore,
\begin{align*}
U(\mc{C}''(j)) = \sideset{}{^*}\bigcup_{i \in \mc{I}_{U(\mc{C}''(j))}} U(i) \subseteq \sideset{}{^*}\bigcup_{i \in W(j)} U(i) \subseteq U(\mc{C}''(j)).
\end{align*}
As a consequence, all the inclusions in the above chain are equality. This concludes the proof of $\cup_{i \in W(j)}^* U(i) = U(\mc{C}''(j))$.
\end{proof}

As a consequence, we obtain Theorem \ref{thm:main} below, which is the main result of this paper. Recall that for $i, j \in \mc{C}''$
\begin{itemize}
  \item $U(i) = \{k \in \mc{C}'' ~|~ \mbox{all directed paths from $k$ to $\mc{C}'$ traverse $i$}\}$,
  \item $\mc{C}''(j)$ denotes the vertex set of that absorbing strong component of $(\mc{C}, \mc{R})$ which contains $j$,
  \item $U(\mc{C}''(j)) = \{k \in \mc{C}'' ~|~ \mbox{all directed paths from $k$ to $\mc{C}'$ traverse $\mc{C}''(j)$}\}$, and
  \item $W(j) = \{k\in\mc{C}''~|~\mbox{there exist $P_j\in\orarr{k,j}$ and $P_{\mc{C}'}\in\orarr{k,\mc{C}'}$ such that $V[P_j]\cap V[P_{\mc{C}'}]=\{k\}$}\}$.
\end{itemize}
Also, recall that $\mc{J} \subseteq \mc{C}''$ is such that $\mc{J}$ contains precisely one element of each non-absorbing strong component of $(\mc{C}, \mc{R})$ and for all $j \in \mc{J}$ we have $\varrho^{\outarc}(j) \cap \varrho^{\outarc}(\mc{C}''(j)) \neq \emptyset$.

\begin{theorem}\label{thm:main}
Let $(\mc{X},\mc{C},\mc{R})$ be a reaction network for which $\ell = t = 1$ and $\delta = 1$. Assume that $(\mc{C},\mc{R})$ is not strongly connected and let $h \in \R^c$ be as in \eqref{eq:fix_h}. For $i, j \in \mc{C}''$ let $U(i)$, $W(j)$, and $U(\mc{C}''(j))$ be as in \eqref{eq:def_of_U(i)}, \eqref{eq:def_of_W(j)}, and \eqref{eq:def_of_U(C''(j))}, respectively. Also, let $\mc{J}$ be as in \eqref{eq:J}. Then for all $\kappa:\mc{R}\to\RPL$ we have $E^{\kappa}_+\neq\emptyset$ if and only if
\begin{align}
\begin{cases}
\mbox{for all $i \in \mc{C}''$ we have $h(U(i)) \leq 0$ and} \\
\mbox{for all $j \in \mc{J}$ such that $W(j) \subseteq \mc{C}''(j)$, we have $h(U(\mc{C}''(j))) < 0$.} \\
\end{cases}
\end{align}
\end{theorem}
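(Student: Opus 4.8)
The plan is to obtain this theorem as a cosmetic rewriting of Corollary \ref{cor:equiv_without_redundancy}. Observe that the equivalent condition in that corollary and the one claimed here share an identical first line, namely $h(U(i)) \leq 0$ for all $i \in \mc{C}''$; they differ only in the second line. In the corollary, the second condition reads ``for all $j \in \mc{J}$ such that $W(j) \subseteq \mc{C}''(j)$, there exists $i \in W(j)$ such that $h(U(i)) < 0$'', whereas here it reads ``for all such $j$ we have $h(U(\mc{C}''(j))) < 0$''. Hence it suffices to prove that, under the standing assumption that $h(U(i)) \leq 0$ for all $i \in \mc{C}''$, these two second-line conditions are equivalent for each fixed $j \in \mc{J}$ satisfying $W(j) \subseteq \mc{C}''(j)$.

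To this end I would fix such a $j$ and verify that the hypotheses of Lemma \ref{lemma:U(C''(j))} are in force. The condition $W(j) \subseteq \mc{C}''(j)$ is exactly the one we are quantifying over, and the remaining hypothesis $\varrho^{\outarc}(j) \cap \varrho^{\outarc}(\mc{C}''(j)) \neq \emptyset$ is guaranteed by $j \in \mc{J}$ through the defining property \eqref{eq:J}. Lemma \ref{lemma:U(C''(j))} then tells us that the sets $\{ U(i) ~|~ i \in W(j) \}$ are pairwise disjoint and that their union equals $U(\mc{C}''(j))$. Combining this with the summation convention \eqref{eq:p(X_0)=sump(x)} yields
\begin{align*}
h(U(\mc{C}''(j))) = \sum_{i \in W(j)} h(U(i)).
\end{align*}

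The desired equivalence is now a triviality about finite sums of non-positive reals: under the standing assumption every summand $h(U(i))$ is $\leq 0$, so the right-hand side is strictly negative if and only if at least one summand is strictly negative. In other words, $h(U(\mc{C}''(j))) < 0$ holds precisely when there exists $i \in W(j)$ with $h(U(i)) < 0$. Feeding this equivalence back into Corollary \ref{cor:equiv_without_redundancy} finishes the argument. I do not expect any genuine obstacle here, since the entire combinatorial content—the disjointness and the covering of $U(\mc{C}''(j))$ by the $U(i)$'s—has already been established in Lemma \ref{lemma:U(C''(j))}; the only point deserving (minor) care is the bookkeeping that turns $h(U(\mc{C}''(j)))$ into the sum over $W(j)$ via the convention \eqref{eq:p(X_0)=sump(x)}, after which the $\leq 0$/$< 0$ dichotomy does the rest.
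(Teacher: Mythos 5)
Your proposal is correct and follows exactly the same route as the paper, whose proof simply cites Corollary \ref{cor:equiv_without_redundancy} and Lemma \ref{lemma:U(C''(j))}; you have merely spelled out the intended one-line argument, namely that the disjoint decomposition $\cup_{i \in W(j)}^* U(i) = U(\mc{C}''(j))$ turns $h(U(\mc{C}''(j)))$ into a sum of non-positive terms, which is negative precisely when some term is. No gaps.
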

\begin{proof}
The statement follows directly from Corollary \ref{cor:equiv_without_redundancy} and Lemma \ref{lemma:U(C''(j))}.
\end{proof}

Since for \eqref{eq:xypic_U} we have $U(\mc{C}''(2)) = U(2)$ and $U(\mc{C}''(7)) = U(7)$ we obtain that for all $\kappa:\mc{R}\to\RPL$ we have $E^{\kappa}_+\neq\emptyset$ if and only if
\begin{align*}
&h(\{2,3,4,5,6,7,8\}) < 0, h(\{3,4\}) \leq 0, h(\{4\}) \leq 0, h(\{5\}) \leq 0, \\
&h(\{6\}) \leq 0, h(\{7,8\}) < 0, ~ \mbox{and} ~ h(\{8\}) \leq 0.
\end{align*}

Since the condition $W(j) \subseteq \mc{C}''(j)$ appeared in our main result, the rest of this section is devoted to provide an equivalent (and more transparent) condition to that. After some preparations, we will arrive to this equivalent condition in Proposition \ref{prop:equiv_Wj_subset_C''j}. Let us start by defining the set $\mc{U}(j)$ for $j \in \mc{J}$ by
\begin{align}\label{eq:def_mcU(j)}
\mc{U}(j) = \bigcup_{j' \in \mc{C}''(j)} U(j').
\end{align}
Also, let $\mc{U}(\mc{C}') = U(\mc{C}')$ (thus, $\mc{U}(\mc{C}') = \mc{C}$). Note that for \eqref{eq:xypic_U} we have
\begin{align*}
\mc{U}(2) = \{2, 3, 4, 5, 6, 7, 8\}, \mc{U}(3) = \{3, 4\}, \mc{U}(5) = \{5, 6\}, ~ \mbox{and} ~ \mc{U}(7) = \{7, 8\}.
\end{align*}
The following proposition states that the collection $\{\mc{U}(j) ~|~ j \in \mc{J}\}$ has a similar property as the collection $\{U(i) ~|~ i \in \mc{C}''\}$ has.

\begin{proposition}\label{prop:laminar_mcU}
Assume that $(\mc{C}, \mc{R})$ satisfies \eqref{eq:tree-like_1}. Let $\mc{J}$ be as in \eqref{eq:J} and for $j \in \mc{J}$ let $\mc{U}(j)$ be as in \eqref{eq:def_mcU(j)}. Let $j_1, j_2 \in \mc{J}$ be such that $j_1 \neq j_2$. Then either $\mc{U}(j_1) \subsetneq \mc{U}(j_2)$ or $\mc{U}(j_1) \supsetneq \mc{U}(j_2)$ or $\mc{U}(j_1) \cap \mc{U}(j_2) = \emptyset$.
\end{proposition}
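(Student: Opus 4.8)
The plan is to reduce the laminarity of the family $\{\mc{U}(j) ~|~ j \in \mc{J}\}$ to the laminarity of the family $\{U(i) ~|~ i \in \mc{C}''\}$ already obtained in Lemma \ref{lemma:U(i)_abcd}$(c)$, and then to propagate the resulting inclusion across a whole strong component. First I would record the easy structural input: since $\mc{J}$ contains exactly one vertex of each non-absorbing strong component, distinct indices $j_1, j_2 \in \mc{J}$ lie in distinct strong components, so $\mc{C}''(j_1) \cap \mc{C}''(j_2) = \emptyset$ and in particular any $a \in \mc{C}''(j_1)$ and $b \in \mc{C}''(j_2)$ are distinct.

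Assume $\mc{U}(j_1) \cap \mc{U}(j_2) \neq \emptyset$ (otherwise we already are in the disjoint case of the trichotomy) and choose $k$ in the intersection. By \eqref{eq:def_mcU(j)} there are $a \in \mc{C}''(j_1)$ and $b \in \mc{C}''(j_2)$ with $k \in U(a) \cap U(b)$. As $U(a)$ and $U(b)$ share $k$, Lemma \ref{lemma:U(i)_abcd}$(c)$ (applicable because $a \neq b$) rules out disjointness and forces $U(a) \subsetneq U(b)$ or $U(b) \subsetneq U(a)$. Say $U(a) \subseteq U(b)$, the other case being symmetric under interchanging $j_1$ and $j_2$. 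Then $a \in U(a) \subseteq U(b)$ by Lemma \ref{lemma:U(i)_abcd}$(a)$, i.e. every directed path from $a$ to $\mc{C}'$ traverses $b$.

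The central step is to propagate this from $a$ to all of $\mc{C}''(j_1)$: I claim $a' \in U(b)$ for every $a' \in \mc{C}''(j_1)$. Suppose not, so some $P \in \orarr{a', \mc{C}'}$ avoids $b$. Strong connectivity of $\mc{C}''(j_1)$ yields $R \in \orarr{a, a'}$ with $V[R] \subseteq \mc{C}''(j_1)$, and since $b \in \mc{C}''(j_2)$ is outside $\mc{C}''(j_1)$, the walk $\con(R,P)$ from $a$ to $\mc{C}'$ avoids $b$ entirely; extracting a directed path from this walk produces a member of $\orarr{a, \mc{C}'}$ avoiding $b$, contradicting $a \in U(b)$. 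This is the step I expect to be the main obstacle: it is where strong connectivity is genuinely used and where one must carefully pass from a walk to an honest path. It is also the reason the whole argument must be run through the generators $U(a')$ rather than through $U(\mc{C}''(j_1))$, since the union $\mc{U}(j_1) = \bigcup_{a' \in \mc{C}''(j_1)} U(a')$ can be strictly smaller than $U(\mc{C}''(j_1))$, so that set is not available as a shortcut.

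With the claim established, the inclusion is immediate: for $m \in \mc{U}(j_1)$ we have $m \in U(a')$ for some $a' \in \mc{C}''(j_1)$, and since every path from $m$ to $\mc{C}'$ traverses $a'$ and, by the claim, every path from $a'$ traverses $b$, every path from $m$ traverses $b$; thus $m \in U(b) \subseteq \mc{U}(j_2)$, giving $\mc{U}(j_1) \subseteq \mc{U}(j_2)$. Finally I would sharpen this to a strict inclusion to match the stated trichotomy. We have $b \in U(b) \subseteq \mc{U}(j_2)$, whereas $b \notin \mc{U}(j_1)$: indeed, if $b \in U(a')$ for some $a' \in \mc{C}''(j_1)$, then (using that $b$ reaches $\mc{C}'$) we get $\orarr{b, a'} \neq \emptyset$, while the claim gives $a' \in U(b)$ and hence $\orarr{a', b} \neq \emptyset$, placing $a'$ and $b$ in a common strong component and contradicting $\mc{C}''(j_1) \cap \mc{C}''(j_2) = \emptyset$. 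Therefore $\mc{U}(j_1) \subsetneq \mc{U}(j_2)$, and the symmetric case yields $\mc{U}(j_2) \subsetneq \mc{U}(j_1)$, completing the trichotomy.
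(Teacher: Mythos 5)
Your proposal is correct and follows essentially the same route as the paper's proof: reduce to the laminarity of $\{U(i)\}$ via Lemma \ref{lemma:U(i)_abcd}~$(c)$, propagate the containment $a\in U(b)$ to all of $\mc{C}''(j_1)$ using strong connectivity, and then push the inclusion through to $\mc{U}(j_1)\subsetneq\mc{U}(j_2)$ (the paper does the last step by citing Lemma \ref{lemma:U(i)_abcd}~$(b)$ and obtains strictness from the witness $j_2'$ rather than your $b\notin\mc{U}(j_1)$ argument, but these are cosmetic differences).
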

\begin{proof}
Suppose that $\mc{U}(j_1) \cap \mc{U}(j_2) \neq \emptyset$. Let $j_1' \in \mc{C}''(j_1)$ and $j_2' \in \mc{C}''(j_2)$ be such that $U(j_1') \cap U(j_2') \neq \emptyset$. Then, by Lemma \ref{lemma:U(i)_abcd} $(b)$ and $(iii)$, either $U(j_1') \subsetneq U(j_2') \bs \{j_2'\}$ or $U(j_1') \bs \{j_1'\} \supsetneq U(j_2')$. Clearly, the two cases are symmetric. Suppose for the rest of this proof that $U(j_1') \subsetneq U(j_2') \bs \{j_2'\}$. Since $j_1'$ and $j_2'$ are not in the same strong component, this has the consequence that $\mc{C}''(j_1) \subsetneq U(j_2')$. Thus, by Lemma \ref{lemma:U(i)_abcd} $(b)$, we have
\begin{align*}
\mc{U}(j_1) = \bigcup_{j' \in \mc{C}''(j_1)} U(j') \subseteq U(j_2') \bs \{j_2'\} \subsetneq U(j_2') \subseteq \bigcup_{j' \in \mc{C}''(j_2)} U(j') = \mc{U}(j_2),
\end{align*}
which concludes the proof.
\end{proof}

A collection $\mc{Q}$ of subsets of a set is called \emph{laminar} if for all $Q_1, Q_2 \in \mc{Q}$ we have
\begin{align*}
Q_1 \subseteq Q_2 ~ \mbox{or} ~ Q_1 \supseteq Q_2 ~ \mbox{or} ~ Q_1 \cap Q_2 = \emptyset.
\end{align*}
Thus, by Lemma \ref{lemma:U(i)_abcd} $(c)$ and Proposition \ref{prop:laminar_mcU} both the collections
\begin{align}\label{eq:laminar_U_mcU}
\{U(i) ~|~ i \in \mc{C}''\} \cup \{\mc{C}\} ~ \mbox{and} ~ \{\mc{U}(j) ~|~ j \in \mc{J}\} \cup \{\mc{C}\}
\end{align}
are laminar. It is straightforward to associate a branching to a laminar collection $\mc{Q}$ that consists of distinct sets. The vertex set of the branching is $\mc{Q}$ itself, while for $Q_1, Q_2 \in \mc{Q}$ the ordered pair $(Q_1,Q_2)$ is an arc if $Q_1 \subseteq Q_2$ and there does not exist $Q_3 \in \mc{Q}$ such that $Q_1 \subseteq Q_3 \subseteq Q_2$. Denote by $T$ and $\mc{T}$ the arborescences associated to the laminar collections in \eqref{eq:laminar_U_mcU}, respectively (since all the other sets of these two collections are contained in $\mc{C}$, the associated branching is actually an arborescence with root $\mc{C}$). We have depicted $T$ and $\mc{T}$ associated to \eqref{eq:xypic_U} in Figure \ref{fig:T_mcT_mfT}.

It is also straightforward to associate to a directed graph $D = (V,A)$ an acyclic directed graph, denoted by $\mf{T}_D$, in the following way. Denote by $V^1,\ldots,V^k$ the vertex sets of the strong components of $D$. The vertex set of $\mf{T}_D$ is then $\{V^1, \ldots, V^k\}$, while for $k_1, k_2 \in \{1, \ldots, k\}$ such that $k_1 \neq k_2$, the ordered pair $(V^{k_1}, V^{k_2})$ is an arc of $\mf{T}_D$ if $\varrho^{\outarc}_D(V^{k_1}) \cap \varrho^{\inarc}_D(V^{k_2}) \neq \emptyset$. We will simply denote by $\mf{T}$ the acyclic directed graph $\mf{T}_{(\mc{C},\mc{R})}$. Thus, the vertex set of $\mf{T}$ is $\{\mc{C}''(j) ~|~ j \in \mc{J}\} \cup \{\mc{C}'\}$. We have depicted $\mf{T}$ associated to \eqref{eq:xypic_U} in Figure \ref{fig:T_mcT_mfT}.

\begin{figure}[!h!t!b]
\begin{align*}
\xymatrix{
             & \mc{C}      & T            &               &                   & \mc{C}           & \mc{T}            & \mc{C}'                    & \mf{T}              \\
             & U(2) \ar[u] &              &               &                   & \mc{U}(2) \ar[u] &                   & \mc{C}''(2) \ar[u]         &                     \\
U(3) \ar[ru] & U(5) \ar[u] & U(6) \ar[lu] & U(7) \ar[llu] & \mc{U}(3) \ar[ru] & \mc{U}(5) \ar[u] & \mc{U}(7) \ar[lu] & \mc{C}''(3) \ar[u]         & \mc{C}''(5) \ar[lu] \\
U(4) \ar[u]  &             &              & U(8) \ar[u]   &                   &                  &                   & \mc{C}''(7) \ar[u] \ar[ru] &                     \\
}
\end{align*}
\caption{The arborescences $T$ and $\mc{T}$ associated to the laminar families \eqref{eq:laminar_U_mcU} and the acyclic directed graph $\mf{T}$ for \eqref{eq:xypic_U}.}
\label{fig:T_mcT_mfT}
\end{figure}
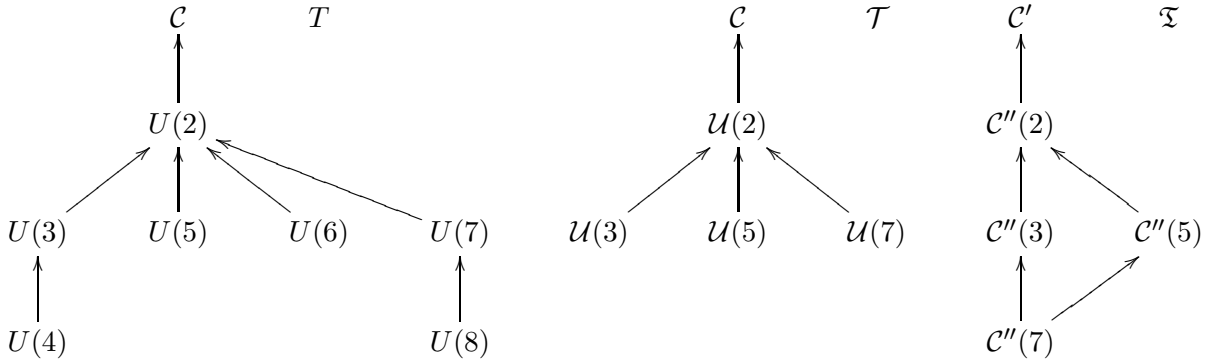

For the sake of simplicity, we perform some natural identifications. We identify from this point on the vertex sets of $T$, $\mc{T}$, and $\mf{T}$ with the sets $\mc{C}'' \cup \{\mc{C}'\}$, $\mc{J} \cup \{\mc{C}'\}$, and $\mc{J} \cup \{\mc{C}'\}$, respectively (recall that $U(\mc{C}') = \mc{C}$ and $\mc{U}(\mc{C}') = \mc{C}$). With these identifications, the arborescence $\mc{T}$ and the acyclic directed graph $\mf{T}$ has the same vertex set, which makes it possible to depict them at once. We have depicted $T$, $\mc{T}$, and $\mf{T}$ considering these identifications in Figure \ref{fig:T_mcT_mfT_identified}.

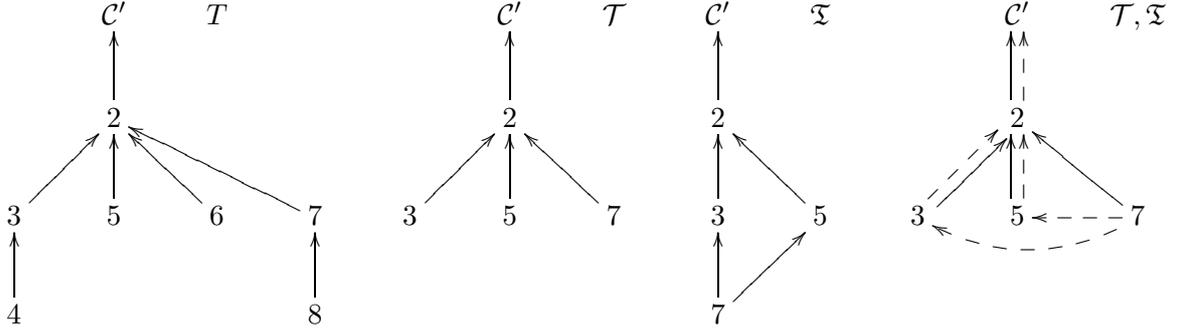
\begin{figure}[!h!t!b]
  \begin{align*}
  \xymatrix{
            & \mc{C}'  & T         &            &           & \mc{C}'  & \mc{T}    & \mc{C}'          & \mf{T}    &                      & \mc{C}'              & \mc{T}, \mf{T}       \\
            & 2 \ar[u] &           &            &           & 2 \ar[u] &           & 2 \ar[u]         &           &                      & 2 \ar@<.5ex>[u]
                                                                                                                                           \ar@<-.5ex>@{-->}[u] &                      \\
  3 \ar[ru] & 5 \ar[u] & 6 \ar[lu] & 7 \ar[llu] & 3 \ar[ru] & 5 \ar[u] & 7 \ar[lu] & 3 \ar[u]         & 5 \ar[lu] & 3 \ar@<-.5ex>[ru]
                                                                                                                    \ar@<.5ex>@{-->}[ru] & 5 \ar@<.5ex>[u]
                                                                                                                                           \ar@<-.5ex>@{-->}[u] & 7 \ar[lu]
                                                                                                                                                                  \ar@{-->}[l]
                                                                                                                                                                  \ar@/^1pc/@{-->}[ll] \\
  4 \ar[u]  &          &           & 8 \ar[u]   &           &          &           & 7 \ar[u] \ar[ru] &           &                      &                      &                      \\
  }
  \end{align*}
  \caption{The arborescences $T$ and $\mc{T}$ associated to the laminar families \eqref{eq:laminar_U_mcU} and the acyclic directed graph $\mf{T}$ for \eqref{eq:xypic_U} considering the straightforward identifications of the vertex sets.}
  \label{fig:T_mcT_mfT_identified}
\end{figure}

The following proposition states that for $j \in \mc{J}$ the condition $W(j) \subseteq \mc{C}''(j)$ can be expressed in terms of $\mc{T}$ and $\mf{T}$. For a directed graph $D = (V, A)$ and $j \in V$ let us denote by $R_D(j)$ the set of vertices from which it is possible to reach $j$ in $D$, i.e.,
\begin{align}\label{eq:def_of_RD(j)}
R_D(j) = \{ \wt{j} \in V ~|~ \mbox{there exists a directed path from $\wt{j}$ to $j$ in $D$}\}.
\end{align}
It is trivial that for all $j \in \mc{J}$ we have $R_{\mc{T}}(j) \subseteq R_{\mf{T}}(j)$.

\begin{proposition}\label{prop:equiv_Wj_subset_C''j}
Assume that $(\mc{C}, \mc{R})$ satisfies \eqref{eq:tree-like_1}. Let $\mc{J}$ be as in \eqref{eq:J} and for $j \in \mc{C}''$ let $W(j)$ be as in \eqref{eq:def_of_W(j)}. Also, let the arborescence $\mc{T}$ and acyclic directed graph $\mf{T}$ be as above. Fix $j \in \mc{J}$. Then $W(j) \subseteq \mc{C}''(j)$ if and only if $R_{\mc{T}}(j) = R_{\mf{T}}(j)$, where $R_{\mc{T}}(j)$ and $R_{\mf{T}}(j)$ are understood in accordance with \eqref{eq:def_of_RD(j)}.
\end{proposition}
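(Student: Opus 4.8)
The plan is to translate both sides of the claimed equivalence into statements about reachability in $\mc{D}=(\mc{C},\mc{R})$ and about the funnel sets $U(\cdot)$, and then to settle the resulting combinatorial equivalence by two path-surgery arguments. First I would record what reachability in $\mc{T}$ and $\mf{T}$ means. Since $\mf{T}$ is the condensation of $\mc{D}$, for $\wt{j},j\in\mc{J}$ we have $\wt{j}\in R_{\mf{T}}(j)$ if and only if $\wt{j}$ reaches $j$ in $\mc{D}$ (and $\mc{C}'$ never reaches $j$, being the unique absorbing component). Since $\mc{T}$ is the arborescence of the nested family $\{\mc{U}(j)\mid j\in\mc{J}\}\cup\{\mc{C}\}$, we have $\wt{j}\in R_{\mc{T}}(j)$ if and only if $\mc{U}(\wt{j})\subseteq\mc{U}(j)$. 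Using Lemma \ref{lemma:U(i)_abcd} $(b)$ I would then verify the reformulation
\begin{align*}
\mc{U}(\wt{j})\subseteq\mc{U}(j) ~\Longleftrightarrow~ \mc{C}''(\wt{j})\subseteq\mc{U}(j),
\end{align*}
since each $j''\in\mc{C}''(\wt{j})\cap\mc{U}(j)$ lies in some $U(j')$ with $j'\in\mc{C}''(j)$ and $j'\neq j''$, whence $U(j'')\subseteq U(j')$. As $R_{\mc{T}}(j)\subseteq R_{\mf{T}}(j)$ always holds, the equality $R_{\mc{T}}(j)=R_{\mf{T}}(j)$ is equivalent to the single condition $(\star)$: $\mc{C}''(\wt{j})\subseteq\mc{U}(j)$ for every $\wt{j}\in\mc{J}$ that reaches $j$. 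So it remains to prove that $(\star)$ holds if and only if $W(j)\subseteq\mc{C}''(j)$.

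For the forward direction I would assume $W(j)\subseteq\mc{C}''(j)$. Since $j\in\mc{J}$ gives $\varrho^{\outarc}(j)\cap\varrho^{\outarc}(\mc{C}''(j))\neq\emptyset$, Lemma \ref{lemma:U(C''(j))} applies and yields $\mc{U}(j)=U(\mc{C}''(j))$. Fixing $\wt{j}\in\mc{J}\bs\{j\}$ that reaches $j$ (the case $\wt{j}=j$ being trivial), I claim $\mc{C}''(\wt{j})\subseteq U(\mc{C}''(j))$, which is exactly $(\star)$. If this failed, some vertex of $\mc{C}''(\wt{j})$, hence (routing inside the strong component) $\wt{j}$ itself, would admit a path $P_{\mc{C}'}\in\orarr{\wt{j},\mc{C}'}$ avoiding $\mc{C}''(j)$; choosing also $Q\in\orarr{\wt{j},j}$ and letting $k$ be the last vertex of $Q$ lying on $P_{\mc{C}'}$, one gets $k\neq j$ (as $j\notin V[P_{\mc{C}'}]$) and subpaths $Q^{k:j}$ and $P_{\mc{C}'}^{k:\mc{C}'}$ meeting only in $k$. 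By Proposition \ref{prop:equiv_to_i_in_W(j)} this witnesses $k\in W(j)$, while $k\in V[P_{\mc{C}'}]$ forces $k\notin\mc{C}''(j)$, contradicting $W(j)\subseteq\mc{C}''(j)$.

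For the converse I would argue contrapositively: assuming $W(j)\not\subseteq\mc{C}''(j)$, pick $k\in W(j)\bs\mc{C}''(j)$ with internally disjoint $P_j\in\orarr{k,j}$ and $P_{\mc{C}'}\in\orarr{k,\mc{C}'}$. As $k$ reaches $j$ but lies outside $\mc{C}''(j)$, the component $\mc{C}''(k)$ is distinct from and reaches $\mc{C}''(j)$, so its representative $\wt{j}\in\mc{J}\cap\mc{C}''(k)$ reaches $j$. It then suffices to show $k\notin\mc{U}(j)$, for then $k\in\mc{C}''(\wt{j})\bs\mc{U}(j)$ violates $(\star)$. Supposing instead $k\in U(j')$ for some $j'\in\mc{C}''(j)$, I would note that $P_{\mc{C}'}$ avoids $j$, so $k\notin U(j)$ and hence $j'\neq j$; and since $j'\in V[P_{\mc{C}'}]\bs\{k\}$, the path $P_j$ avoids $j'$. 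Leaving $\mc{C}''(j)$ through $j$ (possible because $j\in\mc{J}$) and using that a walk cannot re-enter a strong component after leaving it, one obtains a path from $j$ to $\mc{C}'$ meeting $\mc{C}''(j)$ only in $j$, in particular avoiding $j'$. Concatenating $P_j$ with this path yields a walk, hence a path, from $k$ to $\mc{C}'$ avoiding $j'$, contradicting $k\in U(j')$. Thus $k\notin\mc{U}(j)$.

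The routine parts are the two reachability reformulations of the first paragraph together with the repeated use of Lemma \ref{lemma:U(i)_abcd}. The delicate steps, and the places where the hypotheses genuinely enter, are the two surgeries: in the forward direction, extracting the $W(j)$-witness $k$ by cutting $Q$ and $P_{\mc{C}'}$ at their last common vertex (which is precisely what produces the internal disjointness required by Proposition \ref{prop:equiv_to_i_in_W(j)}); and in the converse, constructing a $j'$-avoiding path to $\mc{C}'$, which relies both on $j\in\mc{J}$ (so that $\mc{C}''(j)$ can be left through $j$) and on the elementary but essential fact that a strong component is never re-entered once left. I expect this last construction to be the main obstacle to write out carefully, since it is exactly where the choice of $\mc{J}$ in \eqref{eq:J} and the definition of $U(j')$ must be made to interact.
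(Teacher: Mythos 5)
Your proposal is correct. The skeleton is the same as the paper's (translate $R_{\mc{T}}(j)=R_{\mf{T}}(j)$ into the statement that every vertex reaching $j$ lies in $\mc{U}(j)$, then settle the two implications by path arguments), and your ``converse'' direction is essentially the paper's first half: from $k\in W(j)\bs\mc{C}''(j)$ with internally disjoint $P_j,P_{\mc{C}'}$ and a path $P\in\orarr{j,\mc{C}'}$ meeting $\mc{C}''(j)$ only in $j$, conclude $k\notin\mc{U}(j)$ (the paper phrases this as: $P_{\mc{C}'}$ and $\con(P_j,P)$ share no vertex of $\mc{C}''(j)$, you phrase it by fixing a candidate $j'$ and deriving a contradiction --- same surgery). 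Where you genuinely diverge is the direction $W(j)\subseteq\mc{C}''(j)\Rightarrow R_{\mc{T}}(j)=R_{\mf{T}}(j)$: the paper picks a counterexample vertex $i$ that reaches $j$ but lies in no $U(j')$ with $j'\in\mc{C}''(j)$, chosen minimally in the postdominator tree $T$ (so that $\orarr{p_T(i),j}=\emptyset$), and then verifies $i\in W(j)$ via the Menger-based criterion \eqref{eq:i_in_Q(j)_equiv}; you instead invoke Lemma \ref{lemma:U(C''(j))} to identify $\mc{U}(j)$ with $U(\mc{C}''(j))$ and then extract a witness of $W(j)\bs\mc{C}''(j)$ by cutting a path $Q\in\orarr{\wt{j},j}$ and a $\mc{C}''(j)$-avoiding path $P_{\mc{C}'}\in\orarr{\wt{j},\mc{C}'}$ at their last common vertex, which hands Proposition \ref{prop:equiv_to_i_in_W(j)} the required internally disjoint pair directly. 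Your route buys a more self-contained and arguably cleaner argument for that direction (no minimality selection in $T$, no second appeal to the Menger reformulation), at the cost of leaning on Lemma \ref{lemma:U(C''(j))}, which the paper's proof of this proposition does not need. All the steps you flag as delicate (existence of the $\mc{C}''(j)$-avoiding exit path through $j$, non-re-entry into a strong component, the last-common-vertex cut) check out.
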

\begin{proof}
Assume first that $R_{\mc{T}}(j) = R_{\mf{T}}(j)$ and suppose by contradiction that $W(j) \bs \mc{C}''(j) \neq \emptyset$. Let $i \in W(j) \bs \mc{C}''(j)$, $P_j \in \orarr{i,j}$, and $P_{\mc{C}'} \in \orarr{i,\mc{C}'}$ be such that $V[P_j] \cap V[P_{\mc{C}'}] = \{i\}$ (see Proposition \ref{prop:equiv_to_i_in_W(j)}). Also, let $P \in \orarr{j,\mc{C}'}$ be such that $V[P] \cap \mc{C}''(j) = \{j\}$ (recall that $\varrho^{\outarc}(j) \cap \varrho^{\outarc}(\mc{C}''(j)) \neq \emptyset$). Then both $P_{\mc{C}'}$ and $\con(P_j,P)$ are directed paths from $i$ to $\mc{C}'$ and these two cannot have any common vertex in $\mc{C}''(j)$. Thus, there does not exist $j' \in \mc{C}''(j)$ such that $i \in U(j')$, and consequently $i \notin \mc{U}(j)$. Let $\ol{j} \in \mc{J}$ be such that $i \in \mc{C}''(\ol{j})$. Then $\ol{j} \in R_{\mf{T}}(j)$ and $\ol{j} \notin R_{\mc{T}}(j)$, which contradicts $R_{\mc{T}}(j) = R_{\mf{T}}(j)$.

To show the other direction, assume that $W(j) \subseteq \mc{C}''(j)$ and suppose by contradiction $i \in \mc{C}''$ is such that $\orarr{i,j} \neq \emptyset$ and there does not exist $j' \in \mc{C}''(j)$ such that $i \in U(j')$. Moreover, assume that $i$ is such that $\orarr{p_T(i),j} = \emptyset$, where $p_T(i) \in \mc{C}$ is the \emph{parent} of $i$ in $T$ (i.e., $p_T(i)$ is the second vertex on the unique directed path from $i$ to $\mc{C}'$ in $T$). Since $i \in U(i)$, it follows that $i \notin \mc{C}''(j)$. We will show that $i \in W(j)$, which will thus contradict $W(j) \subseteq \mc{C}''(j)$. To show the inclusion $i \in W(j)$, note that the set $\{ i' \in \mc{C}'' \bs \{i\} ~|~ i \in U(i')\}$ coincides with $V[P] \bs \{i, \mc{C}'\}$, where $P$ is the unique directed path from $i$ to $\mc{C}'$ in $T$. However, by our assumption on $i$, for all $i' \in V[P] \bs \{i, \mc{C}'\}$ we have $\orarr{i',j} = \emptyset$. Thus, taking also into account Proposition \ref{prop:equiv_to_i_in_W(j)} and \eqref{eq:i_in_Q(j)_equiv} in the proof of that proposition, we obtain that $i \in W(j)$. This concludes the proof.
\end{proof}

For the reaction network \eqref{eq:xypic_U} we have
\begin{align*}
R_{\mc{T}}(2) = \{2, 3, 5, 7\} &=          \{2, 3, 5, 7\} = R_{\mf{T}}(2), \\
R_{\mc{T}}(3) = \{3\}          &\subsetneq \{3, 7\}       = R_{\mf{T}}(3), \\
R_{\mc{T}}(5) = \{5\}          &\subsetneq \{5, 7\}       = R_{\mf{T}}(5), ~ \mbox{and} ~ \\
R_{\mc{T}}(7) = \{7\}          &=          \{7\}          = R_{\mf{T}}(7), \\
\end{align*}
which is indeed in accordance with \eqref{eq:W_(n)subseteq_C''}.

We conclude this section by some remarks on the collection $\{U(i) ~|~ i \in \mc{C}''\} \cup \{\mc{C}\}$. First, note that not only $\{U(i) ~|~ i \in \mc{C}''\} \cup \{\mc{C}\}$ determines $T$, but also conversely. Indeed, it is obvious that $U(i) = R_T(i)$ for all $i \in \mc{C}$. In graph theory, if $j \in U(i)$ then we say that $i$ \emph{postdominates} $j$. The arborescence $T$ is called the \emph{postdominator tree}. See e.g. \cite{georgiadis:tarjan:werneck:2006} for more on algorithmic issues concerning the dominator/postdominator tree.

\section*{Acknowledgements}

The author is grateful to Gy\"orgy Michaletzky for the fruitful discussions while facing the difficulties during the attempts of proving the main result.


\appendix

\section{Some basic notions from the theory of directed graphs}\label{sec:app_directed_graphs}

We collect in this section those standard concepts and notations from graph theory that are used throughout this paper. The notions are taken from \cite[Sections 3.2, 9.1, and 10.1]{schrijver:2003}.

Let $D=(V,A)$ be a directed graph without multiple arcs throughout this section.

For $i_0, i_1, \ldots, i_l \in V$ with $l \in \Z_{\geq0}$, we say that $P = (i_0, i_1, \ldots, i_l)$ is a \emph{directed walk from $i_0$ to $i_l$}, or just a \emph{directed walk}, if $(i_k, i_{k+1}) \in A$ for all $k \in \{0, 1, \ldots, l-1\}$. The \emph{length} of a directed walk $P = (i_0, i_1, \ldots, i_l)$, denoted by $\len(P)$, is $l$. For a directed walk $P = (i_0, i_1, \ldots, i_l)$, we denote by $V[P]$ the vertex set $\{i_0, i_1, \ldots, i_l\}$ (we use the notation $V[P]$ even if the vertex set of the directed graph in question is denoted by some other symbol than $V$). If $i \in V[P]$ for a directed walk $P$ then we say that $P$ \emph{traverses} $i$, while if $i \notin V[P]$ then we say that $P$ \emph{avoids} $i$. One can define traversing and avoiding a set $U \subseteq V$ similarly. A directed walk $P = (i_0, i_1, \ldots, i_l)$ is said to be a \emph{directed path} if $i_0, i_1, \ldots, i_l$ are all distinct. For $i, j \in V$, we denote by $\orarr{i,j}$ the set of directed paths from $i$ to $j$. For a directed path $P = (i_0, i_1, \ldots, i_l)$ and $0 \leq k \leq m \leq l$ we denote by $P^{i_k:i_m}$ the directed path $(i_k, i_{k+1}, \ldots, i_m)$. A directed walk $P = (i_0, i_1, \ldots, i_l)$ is said to be a \emph{directed circuit} if $l \geq 1$, $i_0 = i_l$, and $i_0, i_1, \ldots, i_{l-1}$ are all distinct.

For the directed walks $P_1 = (i_0, i_1, \ldots, i_l)$ and $P_2 = (j_0, j_1, \ldots, j_k)$ with $i_l = j_0$, we denote by $\con(P_1,P_2)$ their \emph{concatenation}, which is defined by $\con(P_1,P_2) = (i_0, \ldots, i_l, j_1, \ldots, j_k)$. Clearly, $\con(P_1,P_2)$ is then a directed walk from $i_0$ to $j_k$.

The directed graph $D$ is called \emph{strongly connected} if for all $i, j \in V$ we have $\orarr{i,j} \neq \emptyset$, while it is called \emph{weakly connected} if the underlying undirected graph is connected. The maximal strongly connected subgraphs of $D$ are called the \emph{strong components} of $D$, while the maximal weakly connected subgraphs of $D$ are called the \emph{weak components} of $D$. An \emph{absorbing strong component} of $D$ is a strong component such that there is no arc, which leaves it.

The above definitions are in accordance with the ones in \cite[Section 3.2]{schrijver:2003}, with the only difference that we have defined a directed walk as a sequence of vertices rather than an alternating sequence of vertices and arcs. This choice is made here, because it suffices all our purposes in this paper if we restrict our attention to directed graphs without multiple arcs.

We use a corollary of the following theorem several times in Section \ref{sec:general_forall}. See \cite[Corollary 9.1a]{schrijver:2003} for a proof of Menger's Theorem.

\begin{theorem}[Menger's Theorem \cite{schrijver:2003}]\label{thm:menger}
Let $D = (V,A)$ be a directed graph and let $s$ and $t$ be two nonadjacent vertices
of $D$. Then the maximum number of internally vertex-disjoint $s - t$ paths is
equal to the minimum size of an $s - t$ vertex-cut.
\end{theorem}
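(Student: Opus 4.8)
The plan is to derive Menger's Theorem from the integral max-flow min-cut theorem through the standard \emph{vertex-splitting} construction, which turns internally vertex-disjoint paths into arc-disjoint unit-capacity flow. First I would build an auxiliary capacitated directed graph $D'$: each vertex $v \in V \setminus \{s,t\}$ is split into an in-copy $v^-$ and an out-copy $v^+$ joined by a single \emph{internal} arc $(v^-,v^+)$ of capacity $1$, while $s$ and $t$ are left intact; every original arc $(u,v) \in A$ becomes an arc from the out-copy of $u$ to the in-copy of $v$ and is assigned a large capacity, say $|V|$. The unit capacities on the internal arcs force any integral $s$-$t$ flow to route at most one path through each split vertex, so that integral $s$-$t$ flows in $D'$ of value $k$ correspond exactly to collections of $k$ internally vertex-disjoint $s$-$t$ paths in $D$ (via flow decomposition in one direction, and by reading off the internal arcs that carry flow in the other).

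Next I would invoke the max-flow min-cut theorem together with the integrality of maximum flows for integer capacities. Cutting all internal arcs disconnects $s$ from $t$ in $D'$ — here the hypothesis that $s$ and $t$ are nonadjacent is used, since it guarantees that every $s$-$t$ path has an internal vertex — so the minimum cut value is at most $|V|-2$. Consequently no minimum cut can contain a high-capacity arc (capacity $|V| > |V|-2$), and hence a minimum $s$-$t$ cut of $D'$ consists solely of internal arcs. The set of split vertices indexing these internal arcs is then precisely an $s$-$t$ vertex cut in $D$, and its cardinality equals the cut value. Combining the two correspondences, the maximum number of internally vertex-disjoint $s$-$t$ paths in $D$ equals the minimum size of an $s$-$t$ vertex cut in $D$.

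For completeness I would note that the inequality ``max paths $\leq$ min cut'' requires no flow machinery: every $s$-$t$ path must meet any given vertex cut, and internally vertex-disjoint paths meet it in distinct vertices, so a cut cannot be smaller than a disjoint path system. The content of the theorem is the reverse inequality, which is exactly what max-flow min-cut supplies.

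The step I expect to be the main obstacle is the cut-side bookkeeping: verifying cleanly that a minimum cut of $D'$ may be taken to avoid the high-capacity arcs, and that the associated split vertices genuinely disconnect $s$ from $t$ in $D$ (an arc cut of $D'$ has to be re-read as a vertex cut of $D$). An alternative, self-contained route avoiding the reduction is a direct induction on $|A|$: delete an arc $e=(u,v)$, apply the induction hypothesis to $D-e$, and, if the minimum cut drops, reconstruct the full path system from a minimum cut of $D-e$ together with $e$; this sidesteps max-flow min-cut but demands a somewhat delicate case analysis according to how a size-$(k-1)$ cut of $D-e$ meets $u$ and $v$.
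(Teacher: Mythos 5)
The paper offers no proof of this statement at all: Menger's Theorem is imported verbatim from \cite[Corollary 9.1a]{schrijver:2003} (the text immediately following the theorem says exactly this), so there is no in-paper argument to compare yours against. That said, your proposal is the standard and correct reduction to integral max-flow min-cut via vertex splitting, and the reasoning holds up. The two points you rightly flag as the delicate ones do go through: (i) because $s$ and $t$ are nonadjacent, every $s$--$t$ path in $D$ contains an internal vertex, so the set of all internal arcs of $D'$ is a cut of capacity $|V|-2 < |V|$, forcing any minimum cut to consist of internal arcs only; and (ii) if $S$ is the set of split vertices whose internal arcs form such a minimum cut, then any $s$--$t$ path in $D$ avoiding $S$ would lift to an $s$--$t$ path in $D'$ avoiding the cut, a contradiction, so $S$ is genuinely an $s$--$t$ vertex cut of $D$ of the same cardinality as the cut value. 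Combined with flow decomposition (discarding circulations) and the trivial inequality that disjoint paths meet any vertex cut in distinct vertices, this yields the equality. One cosmetic remark: you do not need the large capacity to be exactly $|V|$; anything exceeding $|V|-2$ (or $+\infty$) works, and the degenerate case $\orarr{s,t}=\emptyset$ is covered since both quantities are then $0$. Your alternative route by induction on $|A|$ is also viable but, as you note, requires a more intricate case analysis; the flow-based argument is the cleaner one and is essentially the proof Schrijver gives for the arc version before deriving the vertex version.
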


For $U \subseteq V$, the set of arcs, which \emph{enter} $U$ and \emph{leave} $U$ are defined by
\begin{align*}
\varrho^{\inarc}_D(U)  &= \{ (i,j) \in A ~|~ i \in V \bs U, j \in U \}~\mbox{and} \\
\varrho^{\outarc}_D(U) &= \{ (i,j) \in A ~|~ i \in U, j \in V \bs U \},
\end{align*}
respectively. Denote by $2^V$ the power set of V. For a function $z : A \to \R$, define the function $\excess_z:2^V\to\R$ by
\begin{eqnarray*}
\excess_z(U) = z(\varrho^{\inarc}(U)) - z(\varrho^{\outarc}(U))~~(U\subseteq V),
\end{eqnarray*}
where $z(\varrho^{\inarc}(U))$ and $z(\varrho^{\outarc}(U))$ are understood in accordance with \eqref{eq:p(X_0)=sump(x)}. For $i\in V$ we use the notations $\varrho^{\inarc}(i),\varrho^{\outarc}(i)$, and $\excess_z(i)$ instead of $\varrho^{\inarc}(\{i\}),\varrho^{\outarc}(\{i\})$, and $\excess_z(\{i\})$, respectively. Note that $\excess_z(V) = \excess_z(\emptyset) = 0$.

An important and frequently used observation is that
\begin{eqnarray}\label{eq:excess_additive}
\excess_z(U) = \sum_{i\in U} \excess_z(i) ~ \mbox{for all} ~ U \subseteq V.
\end{eqnarray}
Thus, the excess function satisfies \eqref{eq:p(X_0)=sump(x)}.

For a function $h : V \to \R$, a function $z : A \to \R$ is called an \emph{$h$-transshipment} if $\excess_z = h$.

Let us define $\mc{A}_{D}(U)$ and $\mc{T}_{D}(U)$ by
\begin{align}
\label{eq:ADU} \mc{A}_{D}(U) &= \left\{ \wt{A} \subseteq A ~\bigg|~ \begin{array}{l} |\varrho^{\outarc}_{(V,\wt{A})}(k)|=0 ~ \mbox{for all} ~ k \in U ~ \mbox{and} \\
                                                                    |\varrho^{\outarc}_{(V,\wt{A})}(k)|=1 ~ \mbox{for all} ~ k \in V \bs U \\ \end{array} \right\} ~ \mbox{and} \\
\label{eq:TDU} \mc{T}_{D}(U) &= \{ \wt{A} \in \mc{A}_D(U) ~|~ (V,\wt{A}) ~ \mbox{is acyclic}\},
\end{align}
respectively. (A directed graph is called \emph{acyclic} if it has no directed circuits.) The elements of $\mc{T}_D(U)$ are called \emph{$U$-branchings in $D$} (or more precisely \emph{$U$-inbranchings in $D$}), the set $U$ being called the \emph{root set}. If $U$ is the singleton $\{j\}$ for some $j \in V$ then a $U$-branching $\wt{A}$ is called a \emph{$j$-arborescence} (or more precisely a \emph{$j$-inarborescence}). Clearly, if $U = \{j_1, \ldots, j_k\}$ for some positive integer $k$ then for a $U$-branching $\wt{A}$, $(V,\wt{A})$ has $k$ weak components, each of them is corresponding to an element of $U$. Denote these weak components by $(V^{j_1},\wt{A}^{j_1}), \ldots,(V^{j_k},\wt{A}^{j_k})$. Then $\wt{A}^j$ is a $j$-arborescence in the directed graph $(V^j,\{(i, i') \in A ~|~ i, i' \in V^j\})$ for all $j \in U$. See Figure \ref{fig:branching} for an illustration of these notions.

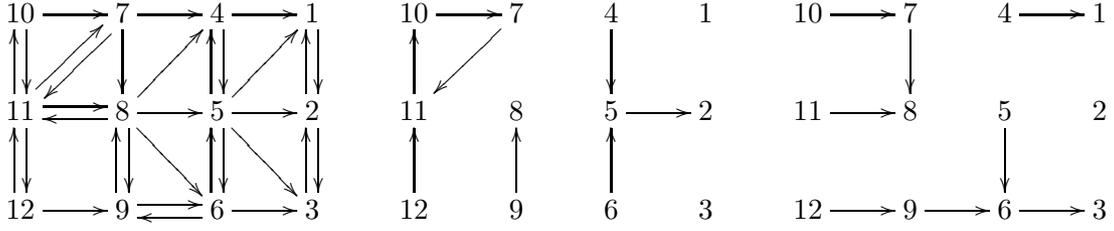
\begin{figure}[!h!t!b]
  \begin{align*}
  \xymatrix{
  10 \ar[r]
     \ar@<.5ex>[d]    &   7 \ar[r]
                            \ar[d]
                            \ar@<.5ex>[ld]   &   4 \ar[r]
                                                   \ar@<.5ex>[d]   &   1 \ar@<.5ex>[d]   & 10 \ar[r] & 7 \ar[dl] & 4 \ar[d] & 1 &   10 \ar[r]    &   7 \ar[d]   &   4 \ar[r]   &   1 \\
  11 \ar@<.5ex>[u]
     \ar@<.5ex>[ur]
     \ar@<.5ex>[r]
     \ar@<.5ex>[d]    &   8 \ar[ru]
                            \ar[r]
                            \ar@<.5ex>[l]
                            \ar@<.5ex>[d]
                            \ar[dr]          &   5 \ar[ur]
                                                   \ar[r]
                                                   \ar[dr]
                                                   \ar@<.5ex>[u]
                                                   \ar@<.5ex>[d]   &   2 \ar@<.5ex>[u]
                                                                         \ar@<.5ex>[d]   & 11 \ar[u] & 8         & 5 \ar[r] & 2 &   11 \ar[r]    &   8         &   5 \ar[d]   &   2 \\
  12 \ar[r]
     \ar@<.5ex>[u]    &   9 \ar@<.5ex>[r]
                            \ar@<.5ex>[u]    &   6 \ar@<.5ex>[l]
                                                   \ar@<.5ex>[u]
                                                   \ar[r]          &   3 \ar@<.5ex>[u]   & 12 \ar[u] & 9 \ar[u]  & 6 \ar[u] & 3 &   12 \ar[r]    &   9 \ar[r]   &   6 \ar[r]   &   3 \\
  }
  \end{align*}
  \caption{A directed graph $D$ with vertex set $\{1, \ldots, 12\}$ (on the left) and two elements of $\mc{A}_D(\{1,2,3,8\})$ (in the middle and on the right). The one in the middle is \emph{not} acyclic, and hence, it does not belong to $\mc{T}_D(\{1,2,3,8\})$, while the one on the right is acyclic, so that one is a $\{1,2,3,8\}$-branching in $D$. It is also apparent that the latter one can be decomposed into four arborescences, the roots of these four arborescences are $1$, $2$, $3$, and $8$, respectively.}
  \label{fig:branching}
\end{figure}

We also use the following notations in Section \ref{sec:general_forall} and Subsection \ref{subsec:matrix-tree-thm}. For $i, j \in V$ let us define $\mc{A}^{ij}_{D}(U)$ and $\mc{T}^{ij}_{D}(U)$ by
\begin{align}
\label{eq:ADUij} \mc{A}^{ij}_{D}(U) &= \{ \wt{A} \in \mc{A}_{D}(U) ~|~ \mbox{there exists a directed path from $i$ to $j$ in $(V,\wt{A})$} \} ~ \mbox{and} \\
\label{eq:TDUij} \mc{T}^{ij}_{D}(U) &= \{ \wt{A} \in \mc{T}_{D}(U) ~|~ \mbox{there exists a directed path from $i$ to $j$ in $(V,\wt{A})$} \},
\end{align}
respectively.

\section{Proof of Theorem \ref{thm:exists_positive_h-transshipment}}\label{sec:app_positive_h-transshipment}

The aim of this section is to prove Theorem \ref{thm:exists_positive_h-transshipment}. The main tool we will use in that proof is the following theorem (see \cite[Corollary 11.2f]{schrijver:2003}), which is a well-known consequence of the Hoffman's Theorem (see \cite[Theorem 11.2]{schrijver:2003}).

\begin{theorem}\label{thm:cor_of_Hoffman}
Let $D = (V,A)$ be a directed graph, let $d,c: A \to \R$ with $d \leq c$ and let $h: V \to \R$ with $h(V) = 0$. Then there exists an $h$-transshipment $z$ with $d \leq z \leq c$ if and only if
\begin{align*}
c(\varrho^{\inarc}(U)) - d(\varrho^{\outarc}(U)) \geq h(U) ~ \mbox{for all} ~ U \subseteq V.
\end{align*}
\end{theorem}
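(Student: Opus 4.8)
The plan is to handle necessity by a one-line computation and to reduce sufficiency to the classical Hoffman circulation theorem (\cite[Theorem 11.2]{schrijver:2003}). Necessity requires nothing beyond the definitions: if $z$ is an $h$-transshipment with $d \leq z \leq c$, then for every $U \subseteq V$ we have, using $\excess_z = h$ together with \eqref{eq:excess_additive} and the definition of $\excess_z$,
\begin{align*}
h(U) = \excess_z(U) = z(\varrho^{\inarc}(U)) - z(\varrho^{\outarc}(U)) \leq c(\varrho^{\inarc}(U)) - d(\varrho^{\outarc}(U)),
\end{align*}
the inequality coming from $z \leq c$ on the arcs entering $U$ and $z \geq d$ on the arcs leaving $U$.

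For sufficiency the idea is to absorb the prescribed excesses into a circulation on an augmented graph. I would introduce an auxiliary vertex $r$ and set $D' = (V \cup \{r\}, A')$, where $A'$ adds to $A$ an arc $(v,r)$ for each $v$ with $h(v) > 0$ and an arc $(r,v)$ for each $v$ with $h(v) < 0$. The bounds are forced on the new arcs: $d' = c' = h(v)$ on $(v,r)$ and $d' = c' = -h(v)$ on $(r,v)$, while the old bounds are kept on $A$. A direct check shows that for any circulation $z'$ on $D'$ (i.e.\ $\excess_{z'} \equiv 0$) the restriction $z = z'|_A$ satisfies $\excess_z(v) = h(v)$ for every $v \in V$, the forced flow on the new arc at $v$ supplying the difference; moreover the balance at $r$ holds automatically, since $h(V) = 0$ makes the total forced inflow and outflow at $r$ equal. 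Conversely, any $h$-transshipment within $[d,c]$ extends to such a circulation by assigning the forced values on the new arcs. Hence an admissible $h$-transshipment exists if and only if $D'$ admits a feasible circulation with $d' \leq z' \leq c'$.

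Applying Hoffman's Theorem to $D'$, such a circulation exists if and only if $d'(\varrho^{\inarc}_{D'}(W)) \leq c'(\varrho^{\outarc}_{D'}(W))$ for all $W \subseteq V \cup \{r\}$, and the main (purely bookkeeping) step is to show this family collapses to the stated condition. I split on whether $r \in W$. When $r \notin W$, writing $W = U \subseteq V$ and tabulating the forced arcs crossing $U$, the contributions sum to $-h(v)$ over $v \in U$ with $h(v)<0$ (entering) and to $h(v)$ over $v \in U$ with $h(v)>0$ (leaving), so Hoffman's inequality rearranges to $d(\varrho^{\inarc}(U)) - c(\varrho^{\outarc}(U)) \leq h(U)$; applying this to $V \setminus U$ and using $\varrho^{\inarc}(V \setminus U) = \varrho^{\outarc}(U)$, $\varrho^{\outarc}(V \setminus U) = \varrho^{\inarc}(U)$, and $h(V \setminus U) = -h(U)$ gives exactly $c(\varrho^{\inarc}(U)) - d(\varrho^{\outarc}(U)) \geq h(U)$. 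The case $r \in W$ reduces to the same computation applied to the complement $(V \cup \{r\}) \setminus W$, which omits $r$, and so yields no new constraints. I expect the only delicate point to be keeping the signs and the two complementary forms of Hoffman's inequality aligned; once the forced-arc contributions are tabulated carefully, the equivalence is immediate.
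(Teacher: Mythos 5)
The paper itself contains no proof of this statement: Theorem \ref{thm:cor_of_Hoffman} is imported verbatim from \cite[Corollary 11.2f]{schrijver:2003} as a known consequence of Hoffman's Theorem \cite[Theorem 11.2]{schrijver:2003}, so there is nothing in-paper to compare against. Your argument is correct and is essentially the standard textbook derivation. Necessity is exactly the one-line estimate you give. For sufficiency, the auxiliary-vertex construction works as you describe: the forced arcs $(v,r)$ with $d'=c'=h(v)$ for $h(v)>0$ and $(r,v)$ with $d'=c'=-h(v)$ for $h(v)<0$ set up a bijection between $h$-transshipments $z$ with $d \leq z \leq c$ in $D$ and feasible circulations in $D'$, with balance at $r$ guaranteed precisely by $h(V)=0$; and your case $r \notin W$ computation, combined with complementation within $V$ (using $\varrho^{\inarc}(V \setminus U) = \varrho^{\outarc}(U)$, $\varrho^{\outarc}(V \setminus U) = \varrho^{\inarc}(U)$, $h(V \setminus U) = -h(U)$), recovers the stated condition exactly.

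One detail is stated inaccurately, though your conclusion survives. For $W = U \cup \{r\}$ the forced arcs crossing $W$ are those attached to vertices of $V \setminus U$, and Hoffman's inequality $d'(\varrho^{\inarc}_{D'}(W)) \leq c'(\varrho^{\outarc}_{D'}(W))$ simplifies to $d(\varrho^{\inarc}(U)) - c(\varrho^{\outarc}(U)) \leq h(U)$, i.e.\ it coincides with the constraint you already derived for $W = U$ itself, \emph{not} with the constraint for the complement $(V \cup \{r\}) \setminus W = V \setminus U$. In Hoffman's family the inequalities for a set and for its complement are genuinely different inequalities (here the complement $V \setminus U$ yields directly the target form $c(\varrho^{\inarc}(U)) - d(\varrho^{\outarc}(U)) \geq h(U)$), so one cannot dismiss the sets containing $r$ merely by pointing to their complements; one dismisses them because the two-line tabulation shows they duplicate the constraints for their intersection with $V$. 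With that small repair the proof is complete.
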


Note that Theorem \ref{thm:exists_positive_h-transshipment} characterises the existence of a \emph{positive} $h$-transshipment for a function $h : V \to \R$ with $h(V) = 0$. Compare this to the characterisation of the existence of a \emph{nonnegative} $h$-transshipment (see \cite[Corollary 11.2h]{schrijver:2003}).

\begin{proofof}[Proof of Theorem \ref{thm:exists_positive_h-transshipment}]
To show that \eqref{eq:h(U)<0} is necessary, let $z: A \to \RPL$ be an $h$-transshipment and $\emptyset \neq U \subsetneq V$ with $\varrho^{\inarc}(U) = \emptyset$. Then
\begin{align*}
h(U) = \excess_z(U) = z(\varrho^{\inarc}(U)) - z(\varrho^{\outarc}(U)) = - z(\varrho^{\outarc}(U)) < 0,
\end{align*}
where the inequality holds, because $\varrho^{\outarc}(U)$ and $\varrho^{\inarc}(U)$ cannot be empty at the same time ($D$ is assumed to be weakly connected and $\emptyset \neq U \subsetneq V$) and the values of $z$ are positive.

To show the sufficiency of \eqref{eq:h(U)<0}, assume for the rest of this proof that \eqref{eq:h(U)<0} holds. Clearly, the existence of a positive $h$-transshipment is equivalent to the existence of $0 < \eps \leq K$ such that there exists an $h$-transshipment $z$ with $\eps \leq z \leq K$. By Theorem \ref{thm:cor_of_Hoffman}, the latter is equivalent to the existence of $0 < \eps \leq K$ such that
\begin{align}\label{eq:k-eps>=h}
K |\varrho^{\inarc}(U)| - \eps |\varrho^{\outarc}(U)| \geq h(U) ~ \mbox{for all} ~ U \subseteq V.
\end{align}

Let
\begin{align}
\label{eq:def_eps} \eps &= \min \left( \left\{- \frac{h(U)}{|\varrho^{\outarc}(U)|} ~\bigg|~ \emptyset \neq U \subsetneq V ~ \mbox{and} ~ \varrho^{\inarc}(U) = \emptyset \right\} \cup \{ 1 \} \right) ~ \mbox{and} \\
\label{eq:def_K} K &= \max \left( \left\{ \frac{h(U) + \eps |\varrho^{\outarc}(U)|}{|\varrho^{\inarc}(U)|} ~\bigg|~ \emptyset \neq U \subsetneq V ~ \mbox{and} ~ \varrho^{\inarc}(U) \neq \emptyset \right\} \cup \{ \eps \} \right).
\end{align}
Note that $\eps > 0$ is guaranteed by \eqref{eq:h(U)<0}. Also, we have $\eps \leq K$. We show that \eqref{eq:k-eps>=h} holds with these specific choices of $\eps$ and $K$.

Both for $U = \emptyset$ and $U = V$ we have $\varrho^{\inarc}(U) = \varrho^{\outarc}(U) = \emptyset$ and $h(U) = 0$, hence \eqref{eq:k-eps>=h} holds in both cases.

Fix for the rest of this proof $\emptyset \neq U \subsetneq V$. In case $\varrho^{\inarc}(U) = \emptyset$, \eqref{eq:k-eps>=h} is a consequence of \eqref{eq:def_eps}, while in case $\varrho^{\inarc}(U) \neq \emptyset$, \eqref{eq:k-eps>=h} follows from \eqref{eq:def_K}.
\end{proofof}

\section{The Matrix-Tree Theorem}\label{sec:app_mtx-tree-thm}

There are several versions of the Matrix-Tree Theorem. The one we present in Subsection \ref{subsec:matrix-tree-thm} appears in \cite[Appendix]{leighton:rivest:1983} as a tool for proving the Markov Chain Tree Theorem. This is slight generalization of Tutte's result (see \cite[Theorem 3.6]{tutte:1948}), while it is a special case of the All Minors Matrix Tree Theorem (see \cite{chaiken}). The previous applications of the Matrix-Tree Theorem in CRNT used Tutte's version (see \cite{craciun:dickenstein:shiu:sturmfels:2008, dickenstein:perezmillan:2011, feliu:wiuf:2012, feliu:wiuf:2013, karp:perezmillan:dasgupta:dickenstein:gunawardena:2012, thomson:gunawardena:2009}), while we need a slightly more general variation in Section \ref{sec:general_forall}. We also provide a direct and elementary proof of the Matrix-Tree Theorem in Subsection \ref{subsec:matrix-tree-thm}, which was worked out by György Michaletzky and the author of this paper.

\subsection{A lemma on the number of inversions in bijections}\label{subsec:app_mtxtree_1}

In the proof of the Matrix-Tree Theorem, we use a purely algebraic lemma, which is a special case of a result in \cite{chaiken}. For the sake of completeness, we also present the proof of this lemma.

Fix a positive integer $n$ for this subsection. Let $W_1$ and $W_2$ be nonempty subsets of $V = \{1, \ldots, n\}$ such that $|W_1| = |W_2|$. For a bijection $\pi : W_1 \to W_2$, we say that $k \in W_1$ and $k' \in W_1$ are in \emph{inversion} if $k < k'$ and $\pi(k) > \pi(k')$. Denote by $\nu(\pi)$ the \emph{number of inversions} in $\pi$, i.e.,
\begin{align*}
\nu(\pi) = |\{(k,k') \in W_1 \times W_1 ~|~ k < k' ~ \mbox{and} ~ \pi(k) > \pi(k')\}|.
\end{align*}
As usual, define the \emph{sign} $\sgn(\pi)$ of the bijection $\pi$ by $\sgn(\pi) = (-1)^{\nu(\pi)}$.

\begin{lemma}\label{lemma:sgnsigma}
Let $i,j \in V$. Let $\sigma : V \bs \{j\} \to V \bs \{i\}$ be a bijection and define the permutation $\ol{\sigma} : V \to V$ by
\begin{align*}
\ol{\sigma}(k) =
\begin{cases}
\sigma(k), & \mbox{if}~k \in V \bs \{j\}, \\
i,         & \mbox{if}~k = j                      \\
\end{cases}
(k \in V).
\end{align*}
Then $\sgn(\ol{\sigma}) = (-1)^{i+j} \sgn(\sigma)$.
\end{lemma}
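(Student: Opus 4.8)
The plan is to compare the inversions of $\ol{\sigma}$ directly against those of $\sigma$, isolating the contribution of the single index $j$ at which the extended map differs from the original. Since $\ol{\sigma}$ agrees with $\sigma$ on $V \bs \{j\}$, a pair $(k,k')$ with $k < k'$ and $k, k' \in V \bs \{j\}$ is an inversion of $\ol{\sigma}$ if and only if it is an inversion of $\sigma$. Hence, writing $N_j$ for the number of inversions of $\ol{\sigma}$ that involve the index $j$, we have $\nu(\ol{\sigma}) = \nu(\sigma) + N_j$. The whole task then reduces to showing $N_j \equiv i+j \pmod 2$, for then $\sgn(\ol{\sigma}) = (-1)^{\nu(\ol{\sigma})} = (-1)^{i+j}(-1)^{\nu(\sigma)} = (-1)^{i+j}\sgn(\sigma)$.

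Next I would compute $N_j$ using $\ol{\sigma}(j) = i$. An inversion of $\ol{\sigma}$ involving $j$ is either a pair $(k,j)$ with $k < j$ and $\ol{\sigma}(k) > i$, or a pair $(j,k')$ with $k' > j$ and $\ol{\sigma}(k') < i$. Setting
\begin{align*}
a = |\{ k \in V \bs \{j\} ~|~ k < j ~ \mbox{and} ~ \sigma(k) > i \}| ~ \mbox{and} ~ b = |\{ k' \in V \bs \{j\} ~|~ k' > j ~ \mbox{and} ~ \sigma(k') < i \}|,
\end{align*}
we obtain $N_j = a + b$. The point now is that $a$ alone is not determined by $i$ and $j$, so one cannot evaluate $N_j$ exactly; the key is that bijectivity of $\sigma$ pins down $a+b$ modulo $2$.

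To exploit this, I would count preimages of the two halves of the codomain $V \bs \{i\}$. The $j-1$ domain elements below $j$ split into $a$ mapping above $i$ and $j-1-a$ mapping below $i$ (none can map to $i$), and similarly let $c$ count the domain elements above $j$ mapping above $i$. Since $\sigma$ is a bijection onto $V \bs \{i\}$, the $n-i$ values exceeding $i$ and the $i-1$ values below $i$ each have the right number of preimages, giving $a + c = n - i$ and $(j-1-a) + b = i-1$. The second identity yields $b = i - j + a$, whence $N_j = a + b = 2a + i - j \equiv i - j \equiv i + j \pmod 2$, as desired. The main obstacle is purely the bookkeeping in this last step — making sure the split of the domain and codomain into the parts below and above $j$ and $i$ is accounted for correctly so that the free parameter $a$ cancels modulo $2$; there is no conceptual difficulty once the reduction to $N_j$ is in place.
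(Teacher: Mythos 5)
Your proof is correct, and it takes a genuinely different route from the paper's. The paper proves the lemma by induction on $j$: the base case $j=1$ gives $\nu(\ol{\sigma}) = \nu(\sigma) + (i-1)$ directly, and the inductive step passes from $j$ to $j-1$ via an auxiliary bijection that swaps the roles of $j-1$ and $j$, tracking how the inversion count changes by exactly $\pm 1$. You instead argue directly: you split $\nu(\ol{\sigma}) = \nu(\sigma) + N_j$, where $N_j = a + b$ counts the inversions involving the new index $j$, and then use the bijectivity of $\sigma$ onto $V \bs \{i\}$ to derive $(j-1-a) + b = i-1$, which forces $N_j = 2a + i - j \equiv i+j \pmod 2$. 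The bookkeeping checks out: the $j-1$ domain elements below $j$ split into $a$ mapping above $i$ and $j-1-a$ mapping below $i$, and the codomain has exactly $i-1$ values below $i$, so the identity you use is exactly the preimage count of $\{1,\dots,i-1\}$. Your argument is shorter and avoids the induction entirely (indeed, the first identity $a + c = n-i$ that you record is not even needed); what the paper's inductive swap argument buys is an explicit step-by-step reduction to the transparent base case, at the cost of the somewhat fiddly case analysis in \eqref{eq:NuPi=NuSigma+-1}.
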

\begin{proof}
We prove this lemma by induction on $j$. To prove the initial step of the induction, let $j=1$. Clearly, $\ol{\sigma}$ inherits all the inversions of $\sigma$. Also, since $\ol{\sigma}(1) = i$, there are exactly $i-1$ elements in $V \bs \{1\}$ that are in inversion with $1$ under $\ol{\sigma}$. Hence, $\nu(\ol{\sigma}) = \nu(\sigma) + (i-1)$. As a consequence,
\begin{align*}
\sgn(\ol{\sigma}) = (-1)^{\nu(\ol{\sigma})} = (-1)^{\nu(\sigma) + (i-1)} = (-1)^{i-1} (-1)^{\nu(\sigma)} = (-1)^{i+1} \sgn(\sigma).
\end{align*}

To prove the inductive step, fix $2 \leq j \leq n$ and assume that the lemma holds with $j-1$ instead of $j$. Let us define $\pi : V \bs \{j-1\} \to V \bs \{i\}$ by
\begin{align*}
\pi(k) =
\begin{cases}
\sigma(k),   & \mbox{if} ~ k \in V \bs \{j-1,j\}, \\
\sigma(j-1), & \mbox{if} ~ k = j                  \\
\end{cases}
(k \in V \bs \{j-1\}).
\end{align*}
Clearly, we have $\nu(\pi) = \nu(\sigma)$. Also, let us define $\ol{\pi} : V \to V$ by
\begin{align*}
\ol{\pi}(k) =
\begin{cases}
\pi(k), & \mbox{if} ~ k \in V \bs \{j-1\}, \\
i,      & \mbox{if} ~ k = j-1              \\
\end{cases}
(k \in V).
\end{align*}
Note that we have defined $\ol{\pi}$ in such a way that $\ol{\pi}|_{V \bs \{j-1, j\}} = \ol{\sigma}|_{V \bs \{j-1, j\}}$, $\ol{\pi}(j) = \ol{\sigma}(j-1)$, and $\ol{\pi}(j-1) = \ol{\sigma}(j) = i$. Thus, for $k, k' \in V$ such that $k < k'$ we have
\begin{align*}
\ol{\pi}(k) > \ol{\pi}(k') ~ \mbox{if and only if} ~
\begin{cases}
\ol{\sigma}(k)   > \ol{\sigma}(k'),  & \mbox{if} ~ k, k' \in V \bs \{j-1, j\},                                  \\
\ol{\sigma}(k)   > \ol{\sigma}(j-1), & \mbox{if} ~ k \in \{1, \ldots, j-2\}, k' = j,                            \\
\ol{\sigma}(k)   > \ol{\sigma}(j),   & \mbox{if} ~ k \in \{1, \ldots, j-2\}, k' = j-1,                          \\
\ol{\sigma}(j-1) > \ol{\sigma}(k'),  & \mbox{if} ~ k = j,                    k' \in \{j+1, \ldots, n\},         \\
\ol{\sigma}(j)   > \ol{\sigma}(k'),  & \mbox{if} ~ k = j-1,                  k' \{j+1, \ldots, n\}, ~\mbox{and} \\
\ol{\sigma}(j-1) < \ol{\sigma}(j),   & \mbox{if} ~ k =j-1,                   k' = j.                            \\
\end{cases}
\end{align*}
Hence,
\begin{align}\label{eq:NuPi=NuSigma+-1}
\nu(\ol{\pi}) =
\begin{cases}
\nu(\ol{\sigma}) + 1, & \mbox{if} ~ \ol{\sigma}(j-1) < \ol{\sigma}(j), \\
\nu(\ol{\sigma}) - 1, & \mbox{if} ~ \ol{\sigma}(j-1) > \ol{\sigma}(j). \\
\end{cases}
\end{align}
Therefore,
\begin{align*}
\sgn(\ol{\sigma}) = (-1)^{\nu(\ol{\sigma})} \stackrel{\eqref{eq:NuPi=NuSigma+-1}}{=} -(-1)^{\nu(\ol{\pi})} = -(-1)^{i + (j-1)} \sgn(\pi) = (-1)^{i+j} \sgn(\sigma),
\end{align*}
where we used the inductive hypothesis for $\ol{\pi}$. This concludes the proof.
\end{proof}

The following corollary is a direct consequence of Lemma \ref{lemma:sgnsigma}.

\begin{corollary}\label{cor:sgnsigma}
Let $Q \subseteq V$ and $i,j \in V \bs Q$. Let $\sigma : V \bs (Q \cup \{j\}) \to V \bs (Q \cup \{i\})$ be a bijection and define the permutation $\ol{\sigma} : V \bs Q \to V \bs Q$ by
\begin{align*}
\ol{\sigma}(k) =
\begin{cases}
\sigma(k), & \mbox{if}~k \in V \bs (Q \cup \{j\}), \\
i,         & \mbox{if}~k = j                      \\
\end{cases}
(k \in V \bs Q).
\end{align*}
Then $\sgn(\ol{\sigma}) = (-1)^{i+j} \sgn(\sigma)$.
\end{corollary}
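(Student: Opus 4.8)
The plan is to deduce Corollary \ref{cor:sgnsigma} from Lemma \ref{lemma:sgnsigma} by transporting everything from the ground set $V \bs Q$ to an initial segment of the integers. Concretely, I would put $m = |V \bs Q|$ and let $\phi : V \bs Q \to \{1, \ldots, m\}$ be the unique strictly increasing bijection, so that $\phi(k)$ is simply the rank of $k$ inside $V \bs Q$. Conjugating by $\phi$, I set $\sigma' = \phi \circ \sigma \circ \phi^{-1}$, a bijection $\{1, \ldots, m\} \bs \{\phi(j)\} \to \{1, \ldots, m\} \bs \{\phi(i)\}$, and $\ol{\sigma'} = \phi \circ \ol{\sigma} \circ \phi^{-1}$, a permutation of $\{1, \ldots, m\}$. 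Since $\ol{\sigma'}(\phi(j)) = \phi(\ol{\sigma}(j)) = \phi(i)$ and $\ol{\sigma'}$ agrees with $\sigma'$ off $\phi(j)$, the pair $(\sigma', \ol{\sigma'})$ is exactly of the form treated by Lemma \ref{lemma:sgnsigma}, with ground set $\{1, \ldots, m\}$ and with $\phi(i), \phi(j)$ playing the roles of $i, j$.

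The first key step is that conjugation by an order isomorphism leaves the inversion count unchanged. Indeed, for $k, k'$ in the relevant domain we have $k < k'$ iff $\phi(k) < \phi(k')$ and $\sigma(k) > \sigma(k')$ iff $\phi(\sigma(k)) > \phi(\sigma(k'))$, so the inversions of $\sigma$ and of $\sigma'$ are in bijection, and likewise for $\ol{\sigma}$ and $\ol{\sigma'}$. Hence $\sgn(\sigma') = \sgn(\sigma)$ and $\sgn(\ol{\sigma'}) = \sgn(\ol{\sigma})$. Applying Lemma \ref{lemma:sgnsigma} to $\sigma'$ on $\{1, \ldots, m\}$ then gives, with no further work, the rank-based identity $\sgn(\ol{\sigma}) = (-1)^{\phi(i) + \phi(j)} \sgn(\sigma)$.

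The remaining step, and the one I expect to be the real obstacle, is to pass from $(-1)^{\phi(i) + \phi(j)}$ to $(-1)^{i + j}$; that is, to verify $\phi(i) + \phi(j) \equiv i + j \pmod{2}$. Here I would use $\phi(i) = i - |Q \cap \{1, \ldots, i-1\}|$ and the analogous expression for $j$, so that the discrepancy $\phi(i) + \phi(j) - (i+j)$ equals $-\bigl(|Q \cap \{1, \ldots, i-1\}| + |Q \cap \{1, \ldots, j-1\}|\bigr)$; assuming $i < j$ and using $i \notin Q$, this collapses modulo $2$ to the number of elements of $Q$ lying strictly between $i$ and $j$. The entire content of the corollary beyond the trivial relabelling is therefore this parity bookkeeping: the reduction supplies the sign attached to the ranks $\phi(i), \phi(j)$ for free, and what must be checked is that replacing the ranks by the original labels $i, j$ does not alter the sign. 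I would accordingly devote the care of the proof to how the omitted indices of $Q$ interleave with $i$ and $j$, since that is precisely where the factor $(-1)^{i+j}$ is justified.
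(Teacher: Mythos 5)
Your reduction to Lemma \ref{lemma:sgnsigma} via the order isomorphism $\phi : V \bs Q \to \{1,\ldots,m\}$ is sound: inversion counts depend only on the order relations, so $\sgn(\sigma)=\sgn(\sigma')$ and $\sgn(\ol{\sigma})=\sgn(\ol{\sigma'})$, and the lemma applied on $\{1,\ldots,m\}$ gives $\sgn(\ol{\sigma})=(-1)^{\phi(i)+\phi(j)}\sgn(\sigma)$. This is evidently what the paper means by calling the corollary a \lq\lq direct consequence'' of the lemma (it offers no further argument). The genuine gap is the step you yourself single out as \lq\lq the real obstacle'' and then never carry out: the congruence $\phi(i)+\phi(j)\equiv i+j$ modulo $2$. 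As you correctly compute, the discrepancy between the two exponents is the parity of the number of elements of $Q$ lying strictly between $i$ and $j$, and nothing in the hypotheses forces this to be even. Concretely, take $V=\{1,2,3\}$, $Q=\{2\}$, $i=1$, $j=3$: the only $\sigma$ sends $1\mapsto 3$ and has $\sgn(\sigma)=+1$, while $\ol{\sigma}$ is the transposition of $\{1,3\}$ with $\sgn(\ol{\sigma})=-1$, yet $(-1)^{i+j}=+1$. So the missing step cannot be supplied: what your argument actually proves is the statement with exponent $\phi(i)+\phi(j)$ (the ranks of $i$ and $j$ inside $V\bs Q$), equivalently $(-1)^{i+j}$ times $(-1)^{|Q\cap\{\min(i,j)+1,\ldots,\max(i,j)-1\}|}$, and that is the correct version of the corollary; the version with $(-1)^{i+j}$ read literally in the ambient labels is false.

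You should therefore either add the hypothesis that $Q$ contains an even number of elements strictly between $i$ and $j$ (which holds in the paper's application, where $Q=\mc{C}'$ precedes all of $\mc{C}''$ in the ordering of the complexes), or restate the conclusion with the rank-based exponent. The defect is invisible downstream: in the only use of the corollary (through Theorem \ref{thm:matrixtree} into Theorem \ref{thm:inverse_Ikappa''}) the factor $(-1)^{i+j}$ cancels against the identical factor coming from the cofactor expansion of $(I_{\kappa}''^{\top})^{-1}$, both being computed with respect to the same index set $\mc{C}''$, so \eqref{eq:Ikappainverse} is unaffected. But as a proof of Corollary \ref{cor:sgnsigma} as stated, your argument stops exactly at the point where it would have to fail.
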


\subsection{The Matrix-Tree Theorem}\label{subsec:matrix-tree-thm}

Fix a positive integer $n$ for this subsection. Associate to a matrix $Z=(z_{ij})_{i,j=1}^n \in \Rnn$ the directed graph $D(Z) = (V(Z), A(Z))$, where $V(Z)=\{1,\ldots,n\}$ and $A(Z)=\{(i,j)\in V(Z) \times V(Z)~|~z_{ij}\neq0\}$. Also, for $Q_1, Q_2 \subseteq \{1,\ldots,n\}$ with $|Q_1| = |Q_2|$, denote by $d_{Q_1, Q_2}(Z)$ the determinant of that matrix, which is obtained from $Z$ by deleting the rows with index in $Q_1$ and the columns with index in $Q_2$.

\begin{theorem}[Matrix-Tree Theorem]\label{thm:matrixtree}
Let $Z=(z_{ij})_{i,j=1}^n\in\Rnn$ be a matrix that satisfy
\begin{align}\label{eq:rowsums=0}
\sum_{j=1}^n z_{ij} = 0 ~ \mbox{for all} ~ i \in \{1, \ldots, n\}.
\end{align}
Fix $Q \subseteq \{1,\ldots,n\}$ and $i, j \in \{1,\ldots,n\} \bs Q$. Then
\begin{align*}
d_{Q \cup \{j\}, Q \cup \{i\}}(Z)=(-1)^{i+j} (-1)^{n-|Q|-1} \sum_{\wt{A} \in \mc{T}^{ij}_{D(Z)}(Q \cup \{j\})} z_{\wt{A}},
\end{align*}
where $\mc{T}^{ij}_{D(Z)}(Q \cup \{j\})$ is understood as in \eqref{eq:TDUij} and the symbol $z_{\wt{A}}$ is a shorthand notation for the product $\prod_{a \in \wt{A}} z_a$.
\end{theorem}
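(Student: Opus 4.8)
The plan is to expand the minor by the Leibniz formula, re-index the bijections as permutations via Corollary~\ref{cor:sgnsigma}, split each permutation into a directed path from $i$ to $j$ and a permutation of the remaining vertices, and finally use the hypothesis \eqref{eq:rowsums=0} to collapse the cyclic contributions into acyclic branchings. Write $S=\{1,\ldots,n\}\bs Q$ and $m=|S|=n-|Q|$. Deleting the rows in $Q\cup\{j\}$ and the columns in $Q\cup\{i\}$ leaves a square matrix with rows indexed by $S\bs\{j\}$ and columns by $S\bs\{i\}$; listing both index sets in increasing order, its determinant is the signed sum over bijections $\sigma:S\bs\{j\}\to S\bs\{i\}$, the sign being exactly $\sgn(\sigma)$ in the sense of Subsection~\ref{subsec:app_mtxtree_1}. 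Extending each $\sigma$ to the permutation $\ol\sigma$ of $S$ with $\ol\sigma(j)=i$ and invoking Corollary~\ref{cor:sgnsigma},
\begin{align*}
d_{Q\cup\{j\},Q\cup\{i\}}(Z)=(-1)^{i+j}\sum_{\substack{\rho:S\to S\text{ a permutation}\\\rho(j)=i}}\sgn(\rho)\prod_{k\in S\bs\{j\}}z_{k\rho(k)}.
\end{align*}

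Next I would decompose each permutation $\rho$ of $S$ with $\rho(j)=i$. The $\rho$-cycle through $j$ is $(j,i,\rho(i),\ldots)$; deleting the omitted out-arc of $j$ turns it into a directed path $P=(i,\rho(i),\ldots,j)\in\orarr{i,j}$ with $V[P]\subseteq S$, while the remaining cycles constitute an arbitrary permutation $\rho'$ of $T:=S\bs V[P]$, and every pair $(P,\rho')$ arises exactly once. Since a cycle of length $\ell$ contributes $(-1)^{\ell-1}$ to the sign and the cycle through $j$ has length $\len(P)+1$, we get $\sgn(\rho)=(-1)^{\len(P)}\sgn(\rho')$, and the product factors as $z_P\cdot\prod_{k\in T}z_{k\rho'(k)}$, where $z_P$ denotes the product of the $z_a$ over the arcs $a$ of $P$. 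Recognising the inner sum over $\rho'$ as the principal minor $\det Z[T]$ (the determinant of the submatrix of $Z$ on rows and columns $T$) gives
\begin{align*}
d_{Q\cup\{j\},Q\cup\{i\}}(Z)=(-1)^{i+j}\sum_{\substack{P\in\orarr{i,j}\\V[P]\subseteq S}}(-1)^{\len(P)}\,z_P\,\det Z[S\bs V[P]].
\end{align*}

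The crux is the \emph{principal-minor lemma}: for every $T\subsetneq\{1,\ldots,n\}$ with $R:=\{1,\ldots,n\}\bs T\neq\emptyset$ one has $\det Z[T]=(-1)^{|T|}\sum_{\wt A\in\mc{T}_{D(Z)}(R)}z_{\wt A}$, the $R$-branchings being forests of in-arborescences rooted at $R$ whose arcs may leave $T$. I would prove it from the identity $(-1)^{|T|}\det Z[T]=\sum_{\rho}(-1)^{\#\mathrm{cycles}(\rho)}\prod_{k\in T}z_{k\rho(k)}$ by eliminating every fixed point through $z_{kk}=-\sum_{l\neq k}z_{kl}$, which is precisely where \eqref{eq:rowsums=0} enters: it converts each self-loop into genuine out-arcs that may now point into $R$. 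After this substitution the configurations are arc sets with out-degree one on $T$ and zero on $R$, and those still carrying a directed cycle must cancel; this cancellation, effected by a sign-reversing involution on the cycle-bearing configurations, is the step I expect to be the main obstacle. (Equivalently, one may merge all of $R$ into a single vertex — legitimate since \eqref{eq:rowsums=0} is preserved — and quote the classical single-root Matrix–Tree Theorem.) The cases $n=2,3$ already exhibit the mechanism: for $|T|=2$ the spurious $2$-cycle weight $z_{12}z_{21}$ cancels against the cross term of $z_{11}z_{22}$.

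Finally I would reassemble. Applying the principal-minor lemma with $T=S\bs V[P]$ and $R=Q\cup V[P]$, so that $|T|=m-\len(P)-1$, the two sign powers combine into the constant $(-1)^{m-1}=(-1)^{n-|Q|-1}$, independent of $P$, yielding
\begin{align*}
d_{Q\cup\{j\},Q\cup\{i\}}(Z)=(-1)^{i+j}(-1)^{n-|Q|-1}\sum_{\substack{P\in\orarr{i,j}\\V[P]\subseteq S}}z_P\sum_{\wt A\in\mc{T}_{D(Z)}(Q\cup V[P])}z_{\wt A}.
\end{align*}
The assignment $(P,\wt A)\mapsto P\cup\wt A$ is a weight-preserving bijection onto $\mc{T}^{ij}_{D(Z)}(Q\cup\{j\})$: adjoining the simple path $P$ to a forest rooted at $Q\cup V[P]$ keeps it acyclic and produces a unique $i$-to-$j$ path, while conversely the out-path from $i$ in any $\wt B\in\mc{T}^{ij}_{D(Z)}(Q\cup\{j\})$ recovers $P$ and $\wt A=\wt B\bs P$. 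Since $z_P\,z_{\wt A}=z_{\wt B}$, the double sum equals $\sum_{\wt B\in\mc{T}^{ij}_{D(Z)}(Q\cup\{j\})}z_{\wt B}$, which is the asserted formula.
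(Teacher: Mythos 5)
Your proof is correct, and while it runs on the same engine as the paper's --- the Leibniz expansion of the minor, Corollary \ref{cor:sgnsigma} for the factor $(-1)^{i+j}$, the extraction of an $i$-to-$j$ path from the cycle of $\ol{\sigma}$ through $j$, and the elimination of fixed points via \eqref{eq:rowsums=0} followed by a cancellation of cyclic configurations --- it is organized along a genuinely different route. You factor the sum through the principal minors $\det Z[S \bs V[P]]$ and isolate the combinatorial core in a separate rooted-forest lemma, $\det Z[T] = (-1)^{|T|}\sum_{\wt{A} \in \mc{T}_{D(Z)}(R)} z_{\wt{A}}$, which you then glue back to the path $P$ via the weight-preserving bijection $(P,\wt{A}) \mapsto P \cup \wt{A}$; the paper never leaves the original minor, regrouping the whole expansion at once by arc configurations $\wt{A} \in \mc{A}^{ij}_{D(Z)}(Q \cup \{j\})$ and computing the coefficient of each $z_{\wt{A}}$ directly. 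Your modularization buys a citable standard statement (the single-root Matrix-Tree Theorem after contracting $R$ to a point, or the principal-minor case of the all-minors theorem) at the price of the extra gluing bijection and the sign bookkeeping $(-1)^{\len(P)}(-1)^{|T|} = (-1)^{n-|Q|-1}$, both of which you carry out correctly (and which also cover the degenerate case $i=j$, where $P$ is the trivial path). The one step you flag as the main obstacle --- cancelling the cycle-bearing configurations --- is precisely where the paper's proof is most explicit: for a configuration containing $m \geq 1$ directed circuits, the compatible permutations are indexed by the subsets of circuits declared to be genuine cycles of the permutation (the undeclared ones arising from expanded fixed points), each declaration flipping the sign, so the coefficient is $\sum_{k=0}^{m}\binom{m}{k}(-1)^k = 0$. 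Accordingly, your sign-reversing involution should act on pairs consisting of a configuration and a set of declared circuits (toggle the status of, say, the first circuit), not on the configurations alone; with that precision, or by quoting the classical theorem after contracting $R$ as you suggest, your argument is complete.
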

\begin{proof}
For shorthand notation, let us denote by $S_{ij}$ the set of bijections from $V \bs (Q \cup \{j\})$ to $V \bs (Q \cup \{i\})$. For an element $\sigma$ of $S_{ij}$, denote by $\ol{\sigma}$ the permutation of $V \bs Q$, which is defined by
\begin{align*}
\ol{\sigma}(k) =
\begin{cases}
\sigma(k), & \mbox{if}~k \in V \bs (Q \cup \{j\}), \\
i,         & \mbox{if}~k = j                      \\
\end{cases}
(k \in V \bs Q).
\end{align*}
Since $\ol{\sigma}$ is a permutation, we may consider its decomposition into disjoint cyclic permutations. This also yields a decomposition of $\sigma$ into cyclic permutations and a \lq\lq path bijection'' from $i$ to $j$, where the \lq\lq path bijection'' is coming from the \lq\lq deletion'' of the assignment $j \mapsto i$ from $\ol{\sigma}$. Denote by
\begin{align*}
&h^{\sigma}~\mbox{the \lq\lq path bijection'' component of $\sigma$,}                               \\
&p_{\sigma}~\mbox{the number of cycles of $\sigma$ of length at least $2$,}                         \\
&f^{\sigma}_1,\ldots, f^{\sigma}_{p_{\sigma}}~\mbox{the cycles of $\sigma$ of length at least $2$,} \\
&q_{\sigma}~\mbox{the number of cycles of $\sigma$ of length $1$, and}                              \\
&g^{\sigma}_1,\ldots, g^{\sigma}_{q_{\sigma}}~\mbox{the cycles of $\sigma$ of length $1$.}          \\
\end{align*}
See \eqref{eq:xypic_sigmadecomp} for an illustration of this decomposition. For this specific example, we have $p_{\sigma}=2$ and $q_{\sigma}=3$.
\begin{align}\label{eq:xypic_sigmadecomp}
\xymatrix{
i \ar@{|->}[r] & \ar@{|->}[r]              & \ar@{|->}[r] & \ar@{|->}[r] & j            &                            & \ar@{|->}[r]  & \ar@{|->}[rd] &               & \ar@{|->}@(ur,dr)^{g^{\sigma}_1} \\
  \ar@{|->}[r] & \ar@{|->}[r] & \ar@{|->}[r]
                                \ar@{}|(.7){h^{\sigma}}[u]
                                \ar@{}|{f^{\sigma}_1}[d] & \ar@{|->}[r] & \ar@{|->}[d] & \ar@{|->}[ru]
                                                                           \ar@{}|{f^{\sigma}_2}[rrr] &               &               & \ar@{|->}[ld] & \ar@{|->}@(ur,dr)^{g^{\sigma}_2} \\
  \ar@{|->}[u] & \ar@{|->}[l]              & \ar@{|->}[l]  & \ar@{|->}[l] & \ar@{|->}[l] &                            & \ar@{|->}[lu] & \ar@{|->}[l]  &               & \ar@{|->}@(ur,dr)^{g^{\sigma}_3} \\
}
\end{align}
For notational convenience, we denote by $z_{h^{\sigma}}$ and $z_{f_l^{\sigma}}$ the product of those entries of $Z$ that correspond to the arcs of the directed path $h^{\sigma}$ and the directed circuit $f_l^{\sigma}$, respectively. (Thus, we implicitly identify the bijections $h^{\sigma}$ and $f_l^{\sigma}$ with the corresponding directed path and directed circuit, respectively.) Also, we identify $g_l^{\sigma}$ with the respective vertex, and thus $z_{g_l^{\sigma},g_l^{\sigma}}$ is the corresponding diagonal entry of $Z$. Then
\begin{align*}
d_{Q \cup \{j\}, Q \cup \{i\}}(Z)
      &= \sum_{\sigma \in S_{ij}} \left[ \sgn(\sigma) \cdot \left( \prod_{k \in V \bs (Q \cup \{j\})} z_{k,\sigma(k)} \right) \right] = \\
      &= \sum_{\sigma \in S_{ij}} \left[ \sgn(\sigma) \cdot z_{h^{\sigma}} \cdot \left( \prod_{l=1}^{p_{\sigma}} z_{f^{\sigma}_l} \right) \cdot
      \left( \prod_{l=1}^{q_{\sigma}} z_{g_l^{\sigma},g_l^{\sigma}} \right) \right] \stackrel{\eqref{eq:rowsums=0}}{=} \\
      &\stackrel{\eqref{eq:rowsums=0}}{=} \sum_{\sigma\in S_{ij}} \left[\sgn(\sigma) \cdot z_{h^{\sigma}} \cdot \left(\prod_{l=1}^{p_{\sigma}}z_{f^{\sigma}_l} \right) \cdot
      \left( \prod_{l=1}^{q_{\sigma}}\left\{-\sum_{k\in V \bs \{g^{\sigma}_l\}} z_{g^{\sigma}_l,k} \right\} \right) \right]= \\
      &= \sum_{\sigma\in S_{ij}} \left[ \sgn(\sigma) \cdot(-1)^{q_{\sigma}} \cdot \left( \sum_{\wt{A} \in \mc{A}_{D(Z)}^{ij,\sigma}(Q \cup \{j\})} z_{\wt{A}} \right) \right]= \\
      &= \sum_{\wt{A} \in \mc{A}_{D(Z)}^{ij}(Q \cup \{j\})} z_{\wt{A}} \left[\sum_{\substack{\sigma\in S_{ij} \\
                h^{\sigma} = h^{\wt{A}} \\
                f^{\sigma}_1,\ldots, f^{\sigma}_{p_{\sigma}} \subseteq \wt{A} \\}}
                \sgn(\sigma) \cdot (-1)^{q_{\sigma}} \right], \\
\end{align*}
where $h^{\wt{A}}$ is the unique directed path from $i$ to $j$ in $(V,\wt{A})$ and
\begin{align*}
\mc{A}_{D(Z)}^{ij,\sigma}(Q \cup \{j\}) = \left\{ \wt{A} \in \mc{A}_{D(Z)}^{ij}(Q \cup \{j\}) ~\bigg|~
\begin{array}{l}
f^{\sigma}_1, \ldots, f^{\sigma}_{p_{\sigma}} \subseteq \wt{A} ~ \mbox{and the unique directed} \\
\mbox{path from $i$ to $j$ in $(V,\wt{A})$ is given by $h^{\sigma}$} \\
\end{array}\right\}
\end{align*}
(recall \eqref{eq:ADUij}). Fix $\wt{A} \in \mc{A}_{D(Z)}^{ij}(Q \cup \{j\})$ for the rest of this proof. We claim that
\begin{align}\label{eq:sum=0or1}
\sum_{\substack{\sigma\in S_{ij} \\
                h^{\sigma} = h^{\wt{A}} \\
                f^{\sigma}_1,\ldots, f^{\sigma}_{p_{\sigma}} \subseteq \wt{A} \\}}
                \sgn(\sigma) \cdot (-1)^{q_{\sigma}} = \left\{
                \begin{array}{ll}
                   0,                        & \mbox{if}~\wt{A} \notin \mc{T}_{D(Z)}^{ij}(Q \cup \{j\}), \\
                   (-1)^{i+j}(-1)^{n-|Q|-1}, & \mbox{if}~\wt{A} \in \mc{T}_{D(Z)}^{ij}(Q \cup \{j\}). \\
                \end{array}\right.
\end{align}

By Corollary \ref{cor:sgnsigma},
\begin{align}\label{eq:sgn_sigma*(-1)^q}
\sgn(\sigma) = (-1)^{i+j} \sgn(\ol{\sigma})= (-1)^{i+j} (-1)^{\len(h^{\sigma})} (-1)^{\sum_{l=1}^{p_{\sigma}} (\len(f_l^{\sigma}) - 1)} = (-1)^{i+j} (-1)^{n-|Q|-1-p_{\sigma}-q_{\sigma}}.
\end{align}
Thus, $\sgn(\sigma) \cdot (-1)^{q_{\sigma}} = (-1)^{i+j} (-1)^{n-|Q|-1-p_{\sigma}}$, which depends on $\sigma$ only through $p_{\sigma}$.

If $\wt{A} \in \mc{T}_{D(Z)}^{ij}(Q \cup \{j\})$ (i.e., $\wt{A}$ is acyclic) then the sum on the left hand side of \eqref{eq:sum=0or1} contains only one term (there is only one element $\sigma \in S_{ij}$ for which $h^{\sigma} = h^{\wt{A}}$ and $p_{\sigma} = 0$). Thus, by \eqref{eq:sgn_sigma*(-1)^q}, we obtain \eqref{eq:sum=0or1} for the case $\wt{A} \in \mc{T}_{D(Z)}^{ij}(Q \cup \{j\})$.

Assume for the rest of this proof that $\wt{A} \notin \mc{T}_{D(Z)}^{ij}(Q \cup \{j\})$. Denote by $m$ the number of directed circuits in $\wt{A}$. Since $\wt{A} \notin \mc{T}_{D(Z)}^{ij}(Q \cup \{j\})$, we have $m \geq 1$. With this and \eqref{eq:sgn_sigma*(-1)^q}, we have
\begin{align*}
\sum_{\substack{\sigma\in S_{ij} \\
                h^{\sigma} = h^{\wt{A}} \\
                f^{\sigma}_1,\ldots, f^{\sigma}_{p_{\sigma}}\subseteq\wt{A} \\}}\sgn(\sigma)\cdot(-1)^{q_{\sigma}}&=
(-1)^{i+j}(-1)^{n-|Q|-1}\cdot\sum_{\substack{\sigma\in S_{ij} \\
                h^{\sigma} = h^{\wt{A}} \\
                f^{\sigma}_1,\ldots, f^{\sigma}_{p_{\sigma}}\subseteq\wt{A} \\}}(-1)^{p_{\sigma}}= \\
                &= (-1)^{i+j}(-1)^{n-|Q|-1}\cdot\sum_{k=0}^{m}\binom{m}{k}(-1)^k=0,
\end{align*}
where the last equality follows from the Binomial Theorem.
\end{proof}

\bibliographystyle{plain}
\bibliography{biblio}

\end{document}